\newtheorem{thm}{Theorem}[section]
\newtheorem{cor}[thm]{Corollary}
\newtheorem{lem}[thm]{Lemma}
\newtheorem{prop}[thm]{Proposition}
\newtheorem{conjecture}[thm]{Conjecture}
\theoremstyle{definition}
\newtheorem{defn}[thm]{Definition}
\newtheorem{remark}[thm]{Remark}
\newtheorem{example}[thm]{Example}
\newtheorem*{assumption}{Assumption}
\def\DPut#1#2{\global\csdef{D#1}{#2}#2}%
\def\DDef#1#2{\global\csdef{D#1}{#2}}%
\begin{document}
\def\leq{\leqslant}%
\def\geq{\geqslant}%

\def\spann#1{\operatorname{span}\left\langle#1\right\rangle}%
\def\tensor{\otimes}
\def\onto{\twoheadrightarrow}
\def\into{\hookrightarrow}
\def\hook{\,\lrcorner\,}
\def\Apolar#1{\operatorname{Apolar}\left(#1\right)}%
\def\gr{\operatorname{gr}}%
\def\DP{P}%
\def\DS{S}%
\def\Dx{\alpha}%
\def\Ann#1{\operatorname{Ann}(#1)}%
\DDef{grp}{\mathbb{G}}%

\title{Classifying local Artinian Gorenstein algebras}%
\author{Joachim Jelisiejew\thanks{The author is a doctoral fellow at the Warsaw Center of
Mathematics and Computer Science financed by the Polish program KNOW.
This article is supported by Polish National Science Center, project
2014/13/N/ST1/02640.
Contact information: Joachim Jelisiejew, Faculty of Mathematics, Informatics
and Mechanics, University of Warsaw. \textbf{Email:} jjelisiejew@mimuw.edu.pl}}%
\maketitle%
\vspace{-8mm}
\begin{abstract}
    The classification of local Artinian Gorenstein algebras is equivalent to the study
    of orbits of a certain non-reductive group action on a polynomial ring.
    We give an explicit formula for the orbits and their tangent spaces. We
    apply our technique to analyse when an algebra is isomorphic to its
    associated graded algebra.
    We classify
    algebras with Hilbert function $(1, 3, 3, 3, 1)$, obtaining finitely many
    isomorphism types, and those with Hilbert function $(1, 2, 2, 2, 1, 1, 1)$.
    We consider fields of arbitrary, large enough, characteristic.
\end{abstract}
\vspace*{-1mm}
{\small\textbf{MSc classes:} 13H10 (Primary), 13N10, 14C05 (Secondary)}
\vspace*{-4mm}

\tableofcontents

\vspace{-3mm}

\section{Introduction}

    The problem of classifying Artinian Gorenstein algebras up to isomorphism is hard and long
    studied, a comprehensive reference is~\cite{iakanev}.
    The results usually rest on additional assumptions: small
    length~\cite{Poonen, Casnati_Notari_6points, cn09,
    Casnati__Isomorphisms_types_of_aG_to_nine, cn10} or being a codimension two complete
    intersection~\cite{briancon, EliasVallaAlmostStretched, EliasHoms}. See
    also~\cite{ia_deformations_of_CI, Ehrenborg, Eastwood_Isaev,
    Fels_Kaup_nilpotent_and_homogeneous} for other approaches.

    Let $k$ be a field. So far we do not impose any conditions such as $k$ being
    algebraically closed or on its characteristic (see
    Section~\ref{sec:applications}), however the reader
    may freely assume that $k = \mathbb{C}$.

    Let $(A, \DPut{mmA}{\mathfrak{m}_A}, k)$ be a local Artinian Gorenstein
    $k$-algebra. As the notation suggests, throughout the paper we assume that $k\to A/\DmmA$ is an
    isomorphism. This assumption is automatic if $k$ is algebraically closed.
    It is standard and it is used to discard examples such as $k =
    \mathbb{R} \subset  A = \mathbb{C}$.

    An important numerical invariant of $A$ is the Hilbert function $\DPut{HA}{H_A}(i) =
    \dim_k \DmmA^i/\DmmA^{i+1}$. The \emph{socle degree} $d$ is the maximal number such that
    $H_A(d)\neq 0$. If $\DHA(d-i) = \DHA(i)$ for all $i$, then the associated
    graded algebra $\DPut{grA}{\operatorname{gr}A} = \bigoplus_{i\geq 0}
    \DmmA^i/\DmmA^{i+1}$ is also Gorenstein, see \cite[Proposition 1.7]{ia94}.

    We may now formulate our motivating problems:
    \begin{enumerate}
        \item What is the classification up to isomorphism of local Artinian Gorenstein algebras with Hilbert function
            $H_A$?
        \item If $H_A$ is symmetric, what are the sufficient conditions for
            $A \simeq \DgrA$?
    \end{enumerate}
    The first of these problems naturally leads to the second: suppose we are classifying
    $A$ whose Hilbert function $H_A$ is symmetric. Since $\DgrA$ is in this
    case Gorenstein, one natural way is to first determine possible
    $\DgrA$ and then classify $A$ having fixed $\DgrA$.
    Note that it is convenient to classify graded Gorenstein algebras by using
    ideas from projective geometry: Waring, border and smoothable ranks, and
    secant varieties, see~\cite{geramita_inverse_systems_of_fat_points,
    iakanev} for the overview of methods and~\cite{LO, bubu2010} for the state
    of the art on secant varieties.
    See~\cite{Migliore_Hvectors, StanleyCombinatoricsAndCommutative} and
    references therein for information about the possible Hilbert functions.
    See~\cite[Appendix]{ia94} for a list of possible Hilbert functions for
    small values of $\dim_k\, A$.

    Our main goal is to refine, simplify and make explicit the existing theory. Having done this,
    we are gratified with classification results in several cases:
    \begin{enumerate}
        \item We reprove and extend the result of Elias and Rossi:
            \begin{prop}[\cite{EliasRossiShortGorenstein,
                EliasRossi_Analytic_isomorphisms},
            Corollary~\ref{ref:eliasrossi:cor}]\label{main:eliasrossi}
                Let $k$ be a field of characteristic other than $2$ or $3$.
            A local Artin Gorenstein $k$-algebra with Hilbert function
            $(1, n, n, 1)$ or Hilbert function $\left(1, n, \binom{n+1}{2}, n, 1\right)$ is
            isomorphic to its associated graded algebras.
            \end{prop}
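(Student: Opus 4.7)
My plan is to work within the Macaulay inverse systems framework set up earlier in the paper. Any such $A$ can be written as $A = \operatorname{Apolar}(f)$ for some $f \in P$ of top degree $d$, and the basic fact of the theory gives $\operatorname{gr} A = \operatorname{Apolar}(f_d)$ where $f_d$ is the leading homogeneous component. Two apolar algebras are isomorphic if and only if the defining polynomials lie in the same $\mathbb{G}$-orbit. Writing $f = f_d + f'$ with $f' \in P_{<d}$, it therefore suffices to show $f \in \mathbb{G}\cdot f_d$, i.e.\ to eliminate the lower-order tail $f'$ by a group action.

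First I would invoke the explicit tangent-space formula developed in the main body of the paper, which describes $T_{f_d}(\mathbb{G}\cdot f_d) \subseteq P$ in terms of contractions $\alpha \hook f_d$ for $\alpha \in S_{\geq 1}$, modulo the action of $\operatorname{Ann}(f_d)$. Then I would check that in both cases the tangent space contains all of $P_{<d}$. For $H = (1,n,n,1)$ with $d=3$, the symmetry $H_A(i) = H_A(3-i)$ translates into surjectivity of the catalecticant maps $S_i \hook f_3 \onto P_{3-i}$ for $i=1,2$, which covers $P_{\leq 2}$. For $H = (1, n, \binom{n+1}{2}, n, 1)$ with $d=4$, the extra input is that $H_A(2) = \binom{n+1}{2} = \dim P_2$ is maximal, so the degree-$2$ catalecticant $S_2 \hook f_4 \to P_2$ is already surjective; combined with the degree-$1$ and degree-$3$ contractions coming from $H_A(1) = H_A(3) = n$, this again fills $P_{<4}$.

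The third step is to integrate the infinitesimal statement to an orbit statement. Inductively on degree, I would kill each homogeneous piece $f_{d-i}$ of $f'$ by applying $\exp(\delta)$ for a suitable nilpotent derivation $\delta$ whose linearization produces the required cancellation (existence of $\delta$ being provided by the tangent-space surjectivity just proved). The exponentials are finite sums, since $\delta$ acts nilpotently on the finite-dimensional truncation of $P$ in which $f$ lives, but the denominators appearing in $\exp$ are bounded by $d! \leq 24$, and these are invertible precisely under the standing hypothesis $\operatorname{char} k \neq 2,3$ (for $d \leq 4$).

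The hard part will be the tangent-space surjectivity in the $(1, n, \binom{n+1}{2}, n, 1)$ case: the ``fat'' middle degree means that hitting all of $P_2$ is the crux, and it requires the full-rank degree-$2$ catalecticant to translate — through the paper's tangent-space formula — into honest tangent directions modulo $\operatorname{Ann}(f_4)$ rather than being absorbed by the apolar ideal. Once this degree-by-degree matching is set up, the characteristic-zero-style exponentiation in the final step is essentially formal.
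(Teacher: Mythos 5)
Your high-level strategy is the paper's own: write $A = \Apolar{f}$, use symmetry of $H_A$ to identify $\gr A$ with $\Apolar{f_d}$, prove $f \in \mathbb{G}\cdot f_d$ by showing the orbit's tangent space contains $P_{<d}$, and integrate via exponentials of nilpotent derivations (that last step is Proposition~\ref{ref:topdegreecomp:prop} and Corollary~\ref{ref:leadingformremoval:cor}). But the step you yourself call the crux is carried out incorrectly, and the error is fatal as written. The tangent space is \emph{not} described by contractions: contractions $\sigma\hook f_d$, $\sigma\in S_{\geq 1}$, span $\mathfrak{m}\hook f_d$, a space of dimension $\dim_k \Apolar{f_d}-1$; these are exactly the tangent directions of the subgroup $S^*\subset\mathbb{G}$, i.e.\ of re-choosing the dual socle generator of the \emph{same} ideal $\Ann{f_d}$, so by themselves they can never produce a non-trivial isomorphism. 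Numerically, your surjectivity claims fail: for $d=3$, $H=(1,n,n,1)$, the catalecticant $S_1\to P_2$ has rank $H_A(2)=n$, which is strictly less than $\dim P_2=\binom{n+1}{2}$ for every $n\geq 2$; for $d=4$, the map $S_1\to P_3$ has rank $H_A(3)=n<\binom{n+2}{3}=\dim P_3$. So for quartics it is $P_3$ — not $P_2$, which contractions \emph{do} fill precisely because $H_A(2)$ is maximal — that is out of reach, the opposite of what you assert.

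The missing ingredient is the automorphism part of $\mathbb{G}$. By Propositions~\ref{ref:dualautomorphism:prop}, \ref{ref:dualderivation:prop} and~\ref{ref:tangentspace:prop}, the tangent space equals $S f_d + \sum_i \mathfrak{m}\,(x_i\cdot f_d)$: besides contractions it contains all elements $x_i\cdot(\delta\hook f_d)$ with $\delta\in\mathfrak{m}$, where $\cdot$ is the divided-power multiplication. This is what fills the large graded pieces: compressedness makes contractions cover $P_{\leq d-2}$ (for $d=4$, $S_2\hook f_4=P_2$ since $H_A(2)=\binom{n+1}{2}$; for $d=3$, $S_2\hook f_3=P_1$), and then multiplication by the linear forms $x_i$ sweeps out all of $P_{\leq d-1}$. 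Moreover, it is here — in inverting the integer coefficients this multiplication produces, e.g.\ the Euler-type identity $\sum_i x_i\cdot(\Dx_i\hook f_2)=2f_2$ used in Example~\ref{ex:1nn1} — and not in the harmless denominators of $\exp$, that the hypothesis $\operatorname{char}k\neq 2,3$ genuinely enters; in characteristic $2$ the tangent space really is too small and the statement is false (Example~\ref{ex:1nn1char2}). Once the tangent-space computation is corrected along these lines, your degree-by-degree exponentiation goes through exactly as in the paper, where the whole argument is packaged as Proposition~\ref{ref:sp:nosmallorder:prop} on $t$-compressed algebras together with Corollary~\ref{ref:eliasrossi:cor}.
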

            The original statement is proven in \cite{EliasRossiShortGorenstein,
            EliasRossi_Analytic_isomorphisms} for fields of characteristic
            zero. In Example~\ref{ex:1nn1char2} we show that the statement is not true for fields of
            characteristic two or three.

            In Proposition~\ref{main:eliasrossi} the condition on the
            Hilbert function can be rephrased by saying that the algebra is
            \emph{compressed}, see~Section~\ref{sec:compressed}. The algebras appearing
            in Proposition~\ref{main:eliasrossi} are precisely compressed algebras of
            socle degree three and four.
            A analogous statement for
            compressed algebras of higher socle degree is false.
            However, we show that compressed algebras of all socle degrees are
            near to the graded algebras, see Corollary~\ref{ref:compressedpoly:cor}.

        \item We prove that general algebras with Hilbert
            function $(1, 2, 3, 3, 2, 1)$ and general algebras with Hilbert
            function $(1, 2, 2, 2,  \ldots , 2, 1)$ are isomorphic to
            their associated graded algebras
            (see~Example~\ref{ex:123321general} or
            Example~\ref{ex:122lotsof21} for a precise meaning of the word
            general).
        \item An algebra is \emph{canonically graded} if it isomorphic to
            its associated graded algebra.
            We investigate the set of (dual socle generators of) algebras
            which are canonically graded. In
            Proposition~\ref{ref:nopennclosed:prop} we show that this set is
            irreducible, but in general neither open nor closed in the
            parameter space. This answers a question of Elias and
            Rossi~\cite[Remark~3.6]{EliasRossi_Analytic_isomorphisms}.

            We investigate density of the set canonically graded algebras
            and in Proposition~\ref{ref:generalcangradrarely:prop} we prove
            the \emph{only if} part of the following conjecture.

            \begin{conjecture}[Conjecture~\ref{ref:density:conj}]
                Assume that $k$ is a field of characteristic not equal to $2, 3, 5$.
                Say that an algebra $A$ has \emph{type} $(n, d)$ if $H_A(1) =
                n$ and the socle degree of $A$ is $d$.
                Then a general algebra of type $(n,
                d)$ is canonically graded if and only if $(n, d)$ belongs to
                the following list:
                \begin{enumerate}
                    \item $d\leq 4$ and $n$ arbitrary,
                    \item $d = 5$ and $n\leq 6$,
                    \item $d = 6$ and $n = 2$.
                    \item $d$ arbitrary and $n = 1$.
                \end{enumerate}
            \end{conjecture}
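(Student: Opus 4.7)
The plan is to prove both directions of this biconditional; the ``only if'' direction is delivered by Proposition~\ref{ref:generalcangradrarely:prop} already cited in the excerpt, so the heart of the proposal is the ``if'' direction, which asserts that for each listed type $(n,d)$ a general algebra of that type is canonically graded. Throughout, I use the framework set up earlier in the paper: via apolarity, algebras of type $(n,d)$ correspond to polynomials $F \in \DP = k[\Dx_1,\ldots,\Dx_n]$ of degree $d$ via $A \simeq \Apolar{F}$, and the isomorphism class of $A$ is determined by the $\Dgrp$-orbit of $F$. The algebra is canonically graded exactly when this orbit meets the subspace $\DP_d$ of homogeneous degree-$d$ forms. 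Letting $\mathcal{P}$ denote the (open) parameter space of polynomials of degree exactly $d$ giving type-$(n,d)$ algebras, the conjecture asserts that $\Dgrp \cdot \DP_d$ is dense in $\mathcal{P}$ precisely when $(n,d)$ lies in the list.

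For the ``if'' direction the general strategy is to pick one homogeneous $F_d \in \DP_d$ (chosen generically enough that $\Apolar{F_d}$ has the compressed Hilbert function for type $(n,d)$) and prove that its $\Dgrp$-orbit is dense in $\mathcal{P}$. By the explicit tangent-space formula developed earlier in the paper, the tangent space to $\Dgrp\cdot F_d$ at $F_d$ is $\mathfrak{g}\cdot F_d$ for the Lie algebra $\mathfrak{g}$ of $\Dgrp$, and density reduces to verifying that this image surjects onto $T_{F_d}\mathcal{P} \subseteq \bigoplus_{i\leq d}\DP_i$. Once this is established at a single $F_d$, openness of the surjectivity locus upgrades ``general'' from $\mathcal{P}$ to dense automatically.

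The case $n = 1$ is trivial, since up to isomorphism the only algebra of type $(1, d)$ is $k[\Dx]/(\Dx^{d+1})$. For $d \leq 4$ and arbitrary $n$, a general $F$ of degree $d$ is automatically compressed, so Proposition~\ref{main:eliasrossi} (together with its easier analogues in socle degree $\leq 2$) applies verbatim. For $d = 5$ with $n \leq 6$, one takes $F_d$ to be a general degree-five form (yielding Hilbert function $(1,n,\binom{n+1}{2},\binom{n+1}{2},n,1)$) and verifies the tangent-space surjectivity directly; the cutoff $n \leq 6$ is the largest value for which $\dim\Dgrp\cdot F_d = \dim\Dgrp - \dim\operatorname{Stab}_\Dgrp(F_d)$ still meets $\dim\mathcal{P}$. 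For $d = 6$ and $n = 2$, one works with a general binary sextic, where the computation is in a single pair of variables and can be carried out explicitly.

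The main obstacle is the two boundary cases $(n,d) = (6,5)$ and $(2,6)$, where the dimension match is barely achieved and a generic but explicit $F_d$ must be chosen so that no additional stabiliser arises from a proper subvariety of $\DP_d$; transversality at $F_d$ is then a concrete but delicate linear-algebra check against the lower-degree graded pieces of $\DP$. A secondary difficulty is the restriction on characteristic: the tangent-space computation can degenerate in small characteristic because iterated partial derivatives acquire unwanted kernels, so one must either argue in characteristic zero and specialise, or verify the surjectivity integrally away from the primes $2, 3, 5$. The ``only if'' half, by contrast, is a cleaner numerical comparison: outside the list one shows $\dim\Dgrp\cdot\DP_d < \dim\mathcal{P}$, so $\Dgrp\cdot\DP_d$ cannot be dense, which is precisely what Proposition~\ref{ref:generalcangradrarely:prop} furnishes.
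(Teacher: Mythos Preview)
The statement you are attempting to prove is explicitly a \emph{conjecture} in the paper, not a theorem. The paper proves only the ``only if'' direction (this is Proposition~\ref{ref:generalcangradrarely:prop}, which you correctly cite). For the ``if'' direction, the paper establishes the cases $n=1$ (trivial) and $d\leq 4$ (Corollary~\ref{ref:eliasrossi:cor}) but for $d=5$, $n\leq 6$ and $d=6$, $n=2$ offers only computer evidence: a machine check that $\tang{F} \supset \DP_{\leq d-1}$ for a pseudo-randomly chosen $F$, together with the remark that semicontinuity would then give dominance of $\Dgrp \times \DP_d \to \DP_{\leq d}$. The paper explicitly finds this unsatisfactory and leaves a computer-free proof as an open problem.

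Your proposal follows exactly the same tangent-space strategy the paper sketches, and you correctly identify that the crux is the boundary cases $(6,5)$ and $(2,6)$. But you do not actually carry out the verification either: you write that ``transversality at $F_d$ is then a concrete but delicate linear-algebra check'' and leave it there. That check \emph{is} the entire content of the missing direction; without performing it (either by exhibiting an explicit $F_d$ and computing $\perpspace{\tang{F_d}}_{\leq d-1} = 0$ by hand, or by a certified computer calculation), you have reproduced the paper's outline but not advanced beyond it. So your proposal is not a proof of the conjecture; it is a correct strategy that coincides with the one the paper already describes and leaves open.
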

        \item We classify algebras with Hilbert function $(1, 3, 3, 3, 1)$,
            obtaining eleven isomorphism types, see Example~\ref{ex:13331}.
            We also discuss the case of $(1, 3, 4, 3, 1)$, show that there are
            infinitely many isomorphism types and classify
            algebras which are not canonically graded, see
            Example~\ref{ex:13431}. Non-canonically graded
            algebras with Hilbert function $(1, 3, 4, 3, 1)$ are investigated
            independently
            in~\cite{masuti_rossi_Artinian_level_algebras_of_socle_degree_four}.
        \item We classify algebras with Hilbert function $(1, 2, 2, 2, 1, 1,
            1)$ obtaining $|k| + 1$ isomorphism types, see
            Example~\ref{ex:1222111}. This example was
            worked out by Elias and Valla~\cite{EliasVallaAlmostStretched}.
    \end{enumerate}

    We now explain our approach.
    In short, the classification problem may be reduced to representation
    theory, see~\cite{Emsalem}. Assume for a moment for
    simplicity of presentation that the characteristic of $k$ is zero. Let $n =
    H_A(1)$. Then $A$ may be seen as a quotient of a fixed power series ring $S =
    k[[\Dx_1, \ldots ,\Dx_n]]$. The ring $S$ acts on a polynomial ring $\DP =
    k[x_1, \ldots ,x_n]$ by
    letting $\Dx_i$ act as the partial derivative with respect to $x_i$.
    It turns out that $A = \DS/\Ann{f}$ for an element $f\in \DP$, called the
    \emph{dual socle generator} of $A$.
    Then the problem of classifying algebras reduces to
    classifying quotients of $\DS$, which in turn boils down to
    classifying $f\in \DP$.
    There is a certain group $\Dgrp$ acting on $\DP$ such that
    $\DS/\Ann{f}  \simeq \DS/\Ann{g}$ if and only if $f$ and $g$ lie in the
    same $\Dgrp$-orbit. We summarise this (for fields of arbitrary characteristic) in
    Proposition~\ref{ref:Gorenstein:prop}.

    Our contribution to the above is in providing explicit formulas for all objects
    involved and in applying Lie-theoretic ideas. In particular, we give formulas for the $\Dgrp$-orbit and its
    tangent space, thus we are able to compute dimensions of orbits. We also
    find the unipotent radical of $\Dgrp$ and analyse its orbits, which are
    closed.

    The explicit formulas make several known results, such as
    those by Elias and Rossi, corollaries of the presented
    theory. This is, in our opinion, the most important contribution of this
    paper. Accordingly, we have tried to keep the paper as elementary as possible.

    The reader willing to get a quick idea of the contents of this paper should
    analyse Example~\ref{ex:1nn1}, referring to previous results if necessary.
    Otherwise we recommend to read through the paper linearly.
    The paper is organized as follows.
    First, we recall and develop the abstract theory of inverse systems; in
    particular we give an explicit formula for the automorphism group action in
    Proposition~\ref{ref:dualautomorphism:prop}. Then we explain the link
    between classifying algebras and elements of $\DP$ in
    Proposition~\ref{ref:Gorenstein:prop}. We give some examples, in
    particular Example~\ref{ex:1nn1}, and then discuss the
    basic Lie theory in Section~\ref{ssec:liegroups}.
    Finally, in Section~\ref{sec:applications} we present the applications
    mentioned above.
    Apart from the conjecture above, there are several natural questions that
    are in our opinion worth considering:
    \begin{enumerate}
        \item What are $H$ such that there are finitely many isomorphism types
            of Artinian Gorenstein algebras with Hilbert function $H$? What is
            the classification in these cases?
        \item An Artinian Gorenstein algebra is \emph{rigid} if it cannot be
            deformed to a non-isomorphic algebra, see~\cite{Schlessinger}.
            The $\Dgrp$-orbit of a rigid algebra should be large.
            For $k$ algebraically closed, are
            there rigid $k$-algebras other than $k$?
        \item Given the Hilbert function, are there non-trivial bounds on the
            dimensions of $\Dgrp$-orbits of algebras with this Hilbert
            function?
        \item Can the above ideas of classification be generalised effectively
            to non-Gorenstein algebras?
    \end{enumerate}

\subsection*{Acknowledgements}

I thank Pedro Macias Marques, Alessandra Bernardi and Kristian Ranestad for
the joint work which inspired the explicit formula in
Proposition~\ref{ref:dualautomorphism:prop}.
I thank Jaros\l{}aw Buczy\'nski for helpful suggestions
and Aldo Conca for pointing some other possible applications.
I thank Anthony Iarrobino for comments on a near-final version of the
paper and the referees for suggesting many improvements.
This paper has emerged from a desire to understand the surprising results of
Joan~Elias and Maria Evelina Rossi \cite{EliasRossiShortGorenstein,
EliasRossi_Analytic_isomorphisms}. I thank them for their hospitality in
Barcelona and Genova.

The computations of apolar algebras were done using Magma~\cite{Magma} but could also be done
using Macaulay2 \cite{M2} command \texttt{fromDual} or
 Joan Elias library~\cite{Elias_CAS} for Singular~\cite{Singular}.

\section{Preliminaries and theoretical results}

\subsection{Power series ring $\DS$ and its dual $\DP$}
\def\DmmS{\mathfrak{m}}%
\def\Ddual#1{{#1}^{\vee}}%
\def\ip#1#2{\left\langle #1, #2\right\rangle}%
\def\ord#1{\operatorname{ord}\left(#1\right)}%
\def\Homthree#1#2#3{\operatorname{Hom}_{#1}(#2, #3)}%
\def\myN{\mathbb{N}_{0}}%
In this section we introduce our main objects of study: the power series ring
$\DS$ and its action on the dual divided power (or polynomial) ring $\DP$.

By $\myN$ we denote the set of non-negative integers.
Let $k$ be an algebraically closed field of arbitrary characteristic.
Let $\DS$ be a power series ring over $k$ of dimension $\dim \DS = n$ and let
$\DmmS$ be its maximal ideal.
By $\ord{\sigma}$ we denote the order of a non-zero $\sigma\in \DS$ i.e.~the largest $i$
such that $\sigma\in \DmmS^i$. Then $\ord{\sigma} = 0$ if and only if
$\sigma$ is invertible.
Let $\Ddual{\DS} = \Homthree{k}{\DS}{k}$ be the space of functionals on $\DS$.
We denote the pairing between $\DS$ and $\Ddual{\DS}$ by
\[
    \ip{-}{-}: \DS \times \Ddual{\DS} \to k.
\]

\begin{defn}\label{ref:contraction:propdef}
    The dual space $\DP \subset \Ddual{\DS}$ is the linear subspace of functionals
    eventually equal to zero:
    \[
        \DP = \left\{ f\in \Ddual{\DS}\ |\ \forall_{D\gg 0}\ \ip{\DmmS^D}{f} = 0\right\}.
    \]
    On $\DP$ we have a structure of $\DS$-module via precomposition:
    for every $\sigma\in \DS$ and $f\in \DP$ the element $\sigma\hook f\in
    \DP$ is defined via the equation
    \begin{equation}\label{eq:hookvsip}
        \ip{\tau}{\sigma\hook f} = \ip{\tau\sigma}{f}\quad \mbox{for
        every}\ \tau\in \DS.
    \end{equation}
    This action is called \emph{contraction}.
\end{defn}

    Existence of contraction is a special case of the following
    construction, which is basic and foundational for our approach.
    Let $L:\DS \to \DS$ be a $k$-linear map. Assume that there exists an
    integer $s$ such that $L(\DmmS^{i}) \subset \DmmS^{i+s}$ for all $i$.
    Then the dual map $\Ddual{L}:\Ddual{\DS}\to \Ddual{\DS}$ restricts to
    $\Ddual{L}:\DP \to \DP$. Explicitly, $\Ddual{L}$ is given by the equation
    \begin{equation}
        \ip{\tau}{\Ddual{L}(f)} = \ip{L(\tau)}{f}\quad \mbox{for
        every}\ \tau\in \DS,\ f\in \DP.
        \label{eq:dual}
    \end{equation}
    To obtain contraction with respect to $\sigma$ we use $L(\tau) =
    \sigma\tau$, the multiplication by $\sigma$. Later in this paper we will also
    consider maps $L$ which are automorphisms or derivations.

\def\aa{\mathbf{a}}%
\def\bb{\mathbf{b}}%
\def\xx{\mathbf{x}}%
\def\DPel#1#2{#1^{[#2]}}%
To get a down to earth description of $\DP$, choose $\Dx_1, \ldots
,\Dx_n\in \DS$ so that $\DS = k[[\Dx_1, \ldots ,\Dx_n]]$.
Write $\Dx^{\aa}$ to denote $\Dx_1^{a_1} \ldots
\Dx_n^{a_n}$. For every $\aa\in \myN^{n}$ there is a unique element
$\DPel{\xx}{\aa}\in \DP$ dual to $\Dx^{\aa}$, given by
\[
    \ip{\Dx^{\bb}}{\DPel{\xx}{\aa}} = \begin{cases}
        1 & \mbox{if }\aa = \bb\\
        0 & \mbox{otherwise.}
    \end{cases}
\]
Additionally, we define $x_i$ as the functional dual to $\Dx_i$, so that $x_i
= \DPel{\xx}{(0, \ldots 0, 1, 0,  \ldots , 0)}$ with one on $i$-th position.

Let us make a few remarks:
\begin{enumerate}
    \item The functionals $\DPel{\xx}{\aa}$ form a basis of $\DP$.
    \item The contraction action is given by the formula
        \[
            \Dx^{\aa} \hook \DPel{\xx}{\bb} =
            \begin{cases}
                \DPel{\xx}{\bb - \aa} & \mbox{if } \bb\geq \aa,\mbox{ that is, }\forall_{i}\ b_i \geq a_i\\
                0 & \mbox{otherwise}.
            \end{cases}
        \]
        Therefore our definition agrees with the standard,
        see~\cite[Definition~1.1, p.~4]{iakanev}.
\end{enumerate}
We say that $\DPel{\xx}{\aa}$ has degree $\sum a_i$. We will freely speak
about constant forms, linear forms, (divided) polynomials of bounded degree
etc.

We endow $\DP$ with a topology, which is the Zariski topology of an affine space.
It will be used when speaking about general polynomials and closed orbits, but
for most of the time it is not important.

Now we will give a ring structure on $\DP$. It will be used
crucially in Proposition~\ref{ref:dualautomorphism:prop}.
For multi-indices $\aa, \bb\in \myN^{n}$ we define $\aa! = \prod (a_i!)$, $\sum
\aa = \sum a_i$ and $\binom{\aa + \bb}{\aa} = \prod_i
\binom{a_i + b_i}{a_i} = \binom{\aa + \bb}{\bb}$.
\begin{defn}
    We define multiplication on $\DP$ by
    \begin{equation}\label{eq:divpowmult}
        \DPel{\xx}{\aa} \cdot \DPel{\xx}{\bb} := \binom{\aa + \bb}{\aa}
        \DPel{\xx}{\aa + \bb}.
    \end{equation}
    In this way $\DP$ is a \emph{divided power ring}.
\end{defn}
The multiplicative structure on $\DP$ can be defined in a coordinate-free
manned using a
natural comultiplication on $\DS$. We refer to \cite[\S A2.4]{Eisenbud} for
details in much greater generality.
\begin{example}\label{ex:dividedpows}
    Suppose that $k$ is of characteristic $3$. Then $\DP$ is not isomorphic to
    a polynomial ring. Indeed, $x_1\cdot x_1\cdot x_1 =
    \left(2\DPel{x_1}{2}\right)\cdot x_1 = 3\DPel{x_1}{3} = 0$. Moreover $\DPel{x_1}{3}$
    is not in the subring generated by $x_1, \ldots ,x_n$.
\end{example}

Note that linear forms from $\DS$ act on $\DP$ as derivatives. Therefore we
can interpret $\DS$ as lying inside the ring of differential operators on $\DP$.
We will need the following related fact.
\def\ithpartial#1#2{#1^{(#2)}}%
\begin{lem}\label{ref:commutator:lem}
    Let $\sigma\in \DS$ and denote by $\ithpartial{\sigma}{i}$ its
    $i$-th partial derivative. For every $f\in \DP$ we have
    \begin{equation}\label{eq:weylalgebra}
        \sigma\hook (x_i\cdot f) - x_i\cdot (\sigma\hook f) =
        \ithpartial{\sigma}{i}\hook f.
    \end{equation}
\end{lem}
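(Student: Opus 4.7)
The plan is to reduce \eqref{eq:weylalgebra} to the Weyl-algebra relation $[\Dx_j\hook\cdot,\,x_i\cdot] = \delta_{ij}$ on $\DP$ and then iterate via a Leibniz-type expansion. Both sides of \eqref{eq:weylalgebra} are $k$-linear in $\sigma$, and for a fixed $f\in \DP$ only finitely many terms of $\sigma$ can contribute on either side, since each of $\sigma\hook(x_i\cdot f)$ and $x_i\cdot(\sigma\hook f)$ depends only on the terms of $\sigma$ of order at most $\deg f + 1$. Hence it suffices to verify \eqref{eq:weylalgebra} for $\sigma$ a monomial $\Dx^{\aa}$.

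I would next settle the base case $\sigma = \Dx_j$, in which $\sigma^{(i)} = \delta_{ij}$ and the claim becomes $\Dx_j\hook(x_i\cdot f) - x_i\cdot(\Dx_j\hook f) = \delta_{ij}f$. This is a direct computation on the basis $\{\DPel{\xx}{\bb}\}$ using the product formula \eqref{eq:divpowmult} and the contraction formula; splitting into $i = j$ and $i\neq j$ disposes of it, and the only integers that appear are $b_i + 1$ and $b_i$, so no characteristic assumption is needed.

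For the inductive step on $|\aa|$ I would split on whether $a_i$ vanishes. If $a_i = 0$, every factor $\Dx_\ell$ of $\Dx^{\aa}$ has $\ell \neq i$, so $\Dx^{\aa}\hook\cdot$ commutes with $x_i\cdot$ by the base case; simultaneously $\sigma^{(i)} = 0$, and \eqref{eq:weylalgebra} holds trivially. If $a_i \geq 1$, factor $\Dx^{\aa}\hook\cdot = (\Dx_i\hook\cdot)\circ(\Dx^{\aa-e_i}\hook\cdot)$ as operators on $\DP$, which is immediate from \eqref{eq:hookvsip} and associativity of multiplication in $\DS$, and apply the operator identity $[AB, C] = A[B, C] + [A, C]B$ with $A = \Dx_i\hook\cdot$, $B = \Dx^{\aa-e_i}\hook\cdot$, $C = x_i\cdot$. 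The base case and the inductive hypothesis combine to give
\begin{equation*}
    [\Dx^{\aa}\hook\cdot,\, x_i\cdot] \;=\; (a_i-1)\,\Dx^{\aa-e_i}\hook\cdot \;+\; \Dx^{\aa-e_i}\hook\cdot \;=\; a_i\,\Dx^{\aa-e_i}\hook\cdot,
\end{equation*}
which equals $\sigma^{(i)}\hook\cdot$ since $(\Dx^{\aa})^{(i)} = a_i\,\Dx^{\aa-e_i}$.

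The main obstacle is essentially bookkeeping: one must isolate the case $a_i = 0$ so that the factorization $\Dx^{\aa} = \Dx_i\cdot\Dx^{\aa-e_i}$ used in the inductive step is legitimate, and handle the degenerate sub-case $a_i = 1$ (where $\Dx^{\aa-2e_i}$ is only formally present and is killed by the coefficient $a_i - 1 = 0$). Since all integer coefficients in the computation arise before any reduction modulo the characteristic of $k$, the identity is valid over any field.
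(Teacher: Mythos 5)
Your proposal is correct, and it reaches the identity by a genuinely different mechanism than the paper. The paper also reduces to monomial $\sigma$ (and monomial $f$), but then factors $\sigma = \Dx_i^r\tau$ with $\tau$ free of $\Dx_i$, commutes $\tau$ past $x_i\cdot$ (your case $a_i = 0$), and finishes with a single closed-form divided-power computation: evaluating both sides on $f = \DPel{x_i}{s}g$, the claim comes down to $(s+1) - (s-r+1) = r$. You instead verify by hand only the first-order relation $\Dx_j\hook(x_i\cdot f) - x_i\cdot(\Dx_j\hook f) = \delta_{ij}f$ and obtain general monomials by induction on $|\aa|$, using that commutation against $x_i\cdot$ is a derivation of the operator algebra, i.e.\ $[AB,C] = A[B,C] + [A,C]B$; the coefficient $a_i$ then accumulates one unit per factor of $\Dx_i$ rather than appearing in one subtraction. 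Your inductive step is sound: the $a_i=0$ case legitimizes the factorization, the degenerate sub-case $a_i=1$ is harmless because the formally undefined $\Dx^{\aa-2e_i}$ carries coefficient $a_i - 1 = 0$, and the composition rule $(\sigma\tau)\hook f = \sigma\hook(\tau\hook f)$ is indeed immediate from~\eqref{eq:hookvsip}. Both routes are characteristic-free for the same reason: all integer identities hold before reduction modulo $p$. What your route buys is conceptual clarity — it is exactly the Weyl-algebra remark that the paper places \emph{after} the lemma, promoted to a proof, with first-order operators as the only hand computation; what the paper's route buys is brevity, a single self-contained calculation with no induction apparatus. You are also slightly more careful than the paper on one point: the explicit finiteness argument justifying the reduction to monomial $\sigma$ when $\sigma$ is an infinite power series.
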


\begin{proof}
    Since the formula is linear in $\sigma$ and $f$ we may assume these are
    monomials. Let $\sigma = \Dx_i^{r}\tau$, where $\Dx_i$ does not appear in
    $\tau$. Then $\ithpartial{\sigma}{i} = r\Dx_i^{r-1}\tau$.
    Moreover $\tau\hook (x_i \cdot f) = x_i\cdot (\tau\hook f)$, thus we
    may replace $f$ by $\tau\hook f$ and reduce to the case $\tau = 1$,
    $\sigma = \Dx_i^{r}$.

    Write $f = \DPel{x_i}{s}g$ where $g$ is a monomial in variables other
    than $x_i$. Then $x_i\cdot f = (s+1)\DPel{x_i}{s+1} g$ according
    to~\eqref{eq:divpowmult}.
    If $s+1 < r$ then both sides of~\eqref{eq:weylalgebra} are
    zero. Otherwise
    \[
        \sigma\hook (x_i \cdot f) = (s+1)\DPel{x_i}{s+1-r}g,\ \
        x_i\cdot (\sigma\hook f) = x_i\cdot\DPel{x_i}{s-r}g =
        (s-r+1)\DPel{x_i}{s-r+1}g,\ \ \ithpartial{\sigma}{i}\hook f =
        r\DPel{x_i}{s-(r-1)}g,
    \]
    so Equation~\eqref{eq:weylalgebra} is valid in this case also.
\end{proof}

\begin{remark}
    Lemma~\ref{ref:commutator:lem} applied to $\sigma = \Dx_i$ shows that
    $\Dx_i\hook (x_i\cdot f) - x_i\cdot (\Dx_i\hook f) = f$. This can be
    rephrased more abstractly by saying that $\Dx_i$ and $x_i$
interpreted as linear operators on $\DP$ generate a Weyl algebra.
\end{remark}

Example~\ref{ex:dividedpows} shows that $\DP$ with its ring structure has
certain properties distinguishing it from the polynomial ring, for example it contains nilpotent elements.
Similar phenomena do not occur in degrees lower than the characteristic or in
characteristic zero, as we show in Proposition~\ref{ref:divpowssmallarepoly:prop} and
Proposition~\ref{ref:divpowerispoly:prop} below.
\begin{prop}\label{ref:divpowssmallarepoly:prop}
    Let $\DP_{\geq d}$ be the linear span of $\{\DPel{\xx}{\aa}\ |\  \sum \aa \geq d\}$.
    Then $\DP_{\geq d}$ is an ideal of $\DP$, for all $d$.
    Let $k$ be a field of characteristic $p$.
    The ring $\DP/\DP_{\geq p}$ is isomorphic to the truncated polynomial ring. In fact
    \[
        \Omega:\DP/\DP_{\geq p}\to k[x_1, \ldots ,x_n]/(x_1, \ldots ,x_n)^p
    \]
    defined by
    \[
        \Omega\left( \DPel{\xx}{\aa} \right) = \frac{x_1^{a_1}  \ldots
        x_n^{a_n}}{a_1! \ldots a_n!}.
    \]
    is an isomorphism.
\end{prop}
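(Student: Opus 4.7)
My plan is to first observe that $\DP_{\geq d}$ being an ideal is essentially immediate from the multiplication formula~\eqref{eq:divpowmult}: any product $\DPel{\xx}{\aa}\cdot\DPel{\xx}{\bb}$ is a scalar multiple of $\DPel{\xx}{\aa+\bb}$, which has degree $\sum\aa + \sum\bb$, so multiplication by an arbitrary element of $\DP$ cannot decrease the degree. A short linear-extension argument finishes this step.

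For the isomorphism $\Omega$, I would verify three things in turn: well-definedness, bijectivity, and compatibility with multiplication. Well-definedness rests on the observation that the image formula only ever involves the scalars $1/(a_1!\cdots a_n!)$ with $\sum a_i < p$; since each $a_i \leq \sum a_i < p$, none of the $a_i!$ acquires a factor of $p$, so these scalars live in $k$. On both sides the basis elements indexed by multi-indices $\aa$ with $\sum \aa < p$ form a $k$-basis, and $\Omega$ sends the basis vector $\DPel{\xx}{\aa}$ to the nonzero scalar multiple $(\aa!)^{-1} x^{\aa}$ of a basis vector of the target, so $\Omega$ is a $k$-linear bijection.

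The multiplicative check is the computation
\[
    \Omega\bigl(\DPel{\xx}{\aa}\cdot \DPel{\xx}{\bb}\bigr) = \binom{\aa+\bb}{\aa}\cdot \frac{x^{\aa+\bb}}{(\aa+\bb)!} = \frac{x^{\aa+\bb}}{\aa!\,\bb!} = \Omega(\DPel{\xx}{\aa})\cdot \Omega(\DPel{\xx}{\bb}),
\]
using the multi-index identity $\binom{\aa+\bb}{\aa} = (\aa+\bb)!/(\aa!\,\bb!)$. When $\sum \aa + \sum \bb \geq p$, both sides are zero (the left by the ideal property of $\DP_{\geq p}$ established in the first step, the right because $x^{\aa+\bb}$ lies in $(x_1,\ldots,x_n)^p$), so the identity survives passing to the quotients.

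I do not anticipate a genuine obstacle: everything rests on the interplay between $\binom{\aa+\bb}{\aa}$ and factorials, and on the fact that $a! \in k^{\times}$ for $a < p$. The only place requiring care is checking that the scalars appearing in the definition of $\Omega$ are actually invertible in characteristic $p$, and that the two zero-locus conditions (degree $\geq p$ on the divided-power side versus the monomial ideal $(x_1,\ldots,x_n)^p$ on the polynomial side) match — both of which follow once one notices that the grading by $\sum\aa$ is preserved by $\Omega$.
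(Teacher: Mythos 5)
Your proof is correct and follows essentially the same route as the paper's: bijectivity via the basis-to-basis observation, and multiplicativity reduced to the identity $\binom{\aa+\bb}{\aa} = \prod_i \frac{(a_i+b_i)!}{a_i!\,b_i!}$. You merely spell out details the paper leaves implicit (invertibility of $a_i!$ for $a_i < p$, the vanishing of both sides when $\sum\aa + \sum\bb \geq p$, and the ideal property of $\DP_{\geq d}$), all of which are handled correctly.
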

\begin{proof}
    Since $\Omega$ maps a basis of $\DP/I_p$ to a basis of $k[x_1, \ldots
    ,x_n]/(x_1, \ldots ,x_n)^p$, it is
    clearly well defined and bijective. The fact that $\Omega$ is a $k$-algebra homomorphism
    reduces to the equality $\binom{\aa + \bb}{\aa} = \prod \frac{(a_i +
    b_i)!}{a_i!b_i!}$.
\end{proof}

\paragraph{Characteristic zero case.}
In this paragraph we assume that $k$ is of characteristic zero.
This case is technically easier, but there are two competing conventions:
contraction and partial differentiation. These agree up to an
isomorphism. The main aim of this section is clarify this isomorphism and
provide a dictionary between divided power rings used in the paper and
polynomial rings in characteristic zero.
Contraction was already defined above, now we define the action of $\DS$ via
partial differentiation.
\begin{defn}\label{ref:partialdiff:def}
    Let $k[x_1, \ldots ,x_n]$ be a polynomial ring. There is a (unique) action
    of $\DS$ on $k[x_1, \ldots ,x_n]$ such that the element $\Dx_i$ acts a
    $\frac{\partial}{\partial x_i}$. For $f\in k[x_1, \ldots ,x_n]$ and
    $\sigma\in \DS$ we denote this action as $\sigma\circ f$.
\end{defn}

The following Proposition~\ref{ref:divpowerispoly:prop} shows that in characteristic zero the ring $\DP$ is in fact
polynomial and the isomorphism identifies the $\DS$-module structure on $\DP$
with that from Definition~\ref{ref:partialdiff:def} above.
\begin{prop}\label{ref:divpowerispoly:prop}
    Suppose that $k$ is of characteristic zero. Let $k[x_1, \ldots ,x_n]$ be a
    polynomial ring with $\DS$-module structure as defined
    in~\ref{ref:partialdiff:def}. Let $\DPut{stale}{\Omega}:\DP \to k[x_1,
    \ldots ,x_n]$ be defined via
    \[
        \Dstale\left(\DPel{\xx}{\aa}\right) = \frac{x_1^{a_1}  \ldots x_{n}^{a_n}}{a_1! \ldots
        a_n!}.
    \]
    Then $\Dstale$ is an isomorphism of rings and an isomorphism of $\DS$-modules.
\end{prop}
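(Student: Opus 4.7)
The plan is to verify the three claims separately: $\Omega$ is a $k$-linear isomorphism, it is a ring homomorphism, and it intertwines the two $\DS$-actions. All three reduce to checks on the monomial basis $\{\DPel{\xx}{\aa}\}$, so the proof will be a bookkeeping exercise in multinomial identities made available by characteristic zero, where each factorial $\aa!$ is invertible in $k$.

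First, I would observe that $\Omega$ sends the $k$-basis $\{\DPel{\xx}{\aa}\}_{\aa\in\myN^n}$ of $\DP$ to the family $\left\{\frac{x^{\aa}}{\aa!}\right\}_{\aa\in\myN^n}$, which in characteristic zero is again a $k$-basis of $k[x_1,\ldots,x_n]$ because all scalars $\aa!$ are nonzero; hence $\Omega$ is a $k$-linear isomorphism. Next, to check the ring homomorphism property, it suffices by $k$-bilinearity to compare $\Omega(\DPel{\xx}{\aa}\cdot\DPel{\xx}{\bb})$ with $\Omega(\DPel{\xx}{\aa})\cdot\Omega(\DPel{\xx}{\bb})$. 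The left side equals $\binom{\aa+\bb}{\aa}\frac{x^{\aa+\bb}}{(\aa+\bb)!}$ by the divided-power multiplication~\eqref{eq:divpowmult} and the definition of $\Omega$, while the right side equals $\frac{x^{\aa+\bb}}{\aa!\,\bb!}$; equality follows from the identity $\binom{\aa+\bb}{\aa}=\prod_i \frac{(a_i+b_i)!}{a_i!\,b_i!}=\frac{(\aa+\bb)!}{\aa!\,\bb!}$.

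For the $\DS$-module compatibility $\Omega(\sigma\hook f)=\sigma\circ \Omega(f)$, I would again reduce to $\sigma=\Dx^{\aa}$ and $f=\DPel{\xx}{\bb}$. When $\bb\geq\aa$ componentwise, $\Omega(\Dx^{\aa}\hook\DPel{\xx}{\bb})=\Omega(\DPel{\xx}{\bb-\aa})=\frac{x^{\bb-\aa}}{(\bb-\aa)!}$, while iterated partial differentiation gives $\Dx^{\aa}\circ \frac{x^{\bb}}{\bb!}=\frac{\bb!}{(\bb-\aa)!\,\bb!}\,x^{\bb-\aa}=\frac{x^{\bb-\aa}}{(\bb-\aa)!}$. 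When $\bb\not\geq\aa$, both sides vanish, since differentiating $x^{\bb}$ too many times in some variable kills it.

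There is really no hard step here; the only place one could slip is ensuring the multi-index binomial identity above is parsed as a \emph{product} over coordinates, which is exactly how $\binom{\aa+\bb}{\aa}$ was defined in the paragraph preceding~\eqref{eq:divpowmult}, and in noting that invertibility of the factorials (used both to define $\Omega$ and to compare derivatives with contraction) requires characteristic zero, as Example~\ref{ex:dividedpows} and Proposition~\ref{ref:divpowssmallarepoly:prop} illustrate.
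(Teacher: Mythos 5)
Your proposal is correct and follows the same route as the paper: send the basis $\{\DPel{\xx}{\aa}\}$ to the basis $\{x^{\aa}/\aa!\}$ to get bijectivity, reduce the ring-homomorphism property to the identity $\binom{\aa+\bb}{\aa}=\prod_i\frac{(a_i+b_i)!}{a_i!\,b_i!}$, and verify the $\DS$-module compatibility on monomials. The only difference is that the paper delegates the ring-isomorphism part to the proof of Proposition~\ref{ref:divpowssmallarepoly:prop} and leaves the module-structure check to the reader, whereas you write out both verifications explicitly; the computations you give are accurate.
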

\begin{proof}
    The map $\Dstale$ is an isomorphism of $k$-algebras by the same argument as in
    Proposition~\ref{ref:divpowssmallarepoly:prop}.
     We leave the check that $\Dstale$ is a $\DS$-module
    homomorphism to the reader.
\end{proof}

Summarising, we get the following corresponding notions.
\begin{center}

\begin{tabular}{@{}l l l @{}}
    Arbitrary characteristic && Characteristic zero \\ \midrule
    divided power series ring $\DP$ && polynomial ring $k[x_1, \ldots ,x_n]$\\
    $\DS$-action by contraction (precomposition) denoted $\sigma\hook f$  && $\DS$ action by
    derivations denoted $\sigma\circ f$\\
    $\DPel{\xx}{\aa}$ && $\xx^{\aa}/\aa!$\\
    $x_i = \DPel{\xx}{(0, \ldots 0, 1, 0,  \ldots , 0)}$ && $x_i$\\
\end{tabular}
\end{center}

\subsection{Automorphisms and derivations of the power series ring}

Let as before $\DS = k[[\Dx_1, \ldots ,\Dx_n]]$ be a power series ring with
maximal ideal $\DmmS$. This ring has a huge automorphism group: for every
choice of elements $\sigma_1, \ldots ,\sigma_n\in \DmmS$ whose images span
$\DmmS/\DmmS^2$ there is a unique
automorphism $\varphi:\DS\to \DS$ such that $\varphi(\Dx_i) = \sigma_i$.
Note that $\varphi$ preserves $\DmmS$ and its powers. Therefore the dual
map $\Ddual{\varphi} : \Ddual{\DS}\to \Ddual{\DS}$ restricts to
$\DPut{phid}{\Ddual{\varphi}}:\DP\to \DP$.
The map $\Dphid$ is defined (using the pairing of
Definition~\ref{ref:contraction:propdef}) via the condition
\begin{equation}\label{eq:definitionofdual}
    \ip{\varphi(\sigma)}{f} = \ip{\sigma}{\Dphid(f)}\quad\mbox{for all
    }\sigma\in \DS,\ f\in \DP.
\end{equation}

\def\DDD{\mathbf{D}}%
Now we will describe this action explicitly.
\begin{prop}\label{ref:dualautomorphism:prop}
    Let $\varphi:\DS\to\DS$ be an automorphism. Let $D_i = \varphi(\Dx_i) -
    \Dx_i$. For $\aa\in \myN^{n}$ denote $\DDD^{\aa} = D_1^{a_1} \ldots
    D_n^{a_n}$.
    Let $f\in \DP$.
    Then
    \[
        \Ddual{\varphi}(f) = \sum_{\aa\in \myN^n} \DPel{\xx}{\aa}\cdot \left(
        \DDD^{\aa} \hook f \right) = f + \sum_{i=1}^n x_i\cdot (D_i
        \hook f) +  \ldots .
    \]
\end{prop}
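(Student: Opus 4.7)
The plan is to verify the formula by showing both sides pair identically with every monomial $\Dx^{\bb}\in\DS$. Since any $f\in \DP$ has finite support in the basis $\{\DPel{\xx}{\aa}\}$, the pairing $\ip{\Dx^{\bb}}{\cdot}$ for all $\bb\in \myN^n$ separates points of $\DP$, so it suffices to verify
\[
    \ip{\Dx^{\bb}}{\Ddual{\varphi}(f)} \;=\; \ip{\Dx^{\bb}}{\textstyle\sum_{\aa}\DPel{\xx}{\aa}\cdot(\DDD^{\aa}\hook f)}
\]
for each $\bb$. By the defining relation~\eqref{eq:definitionofdual}, the left-hand side equals $\ip{\varphi(\Dx^{\bb})}{f}$.

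First, I would note well-definedness: since $\varphi$ preserves $\DmmS$, each $D_i$ lies in $\DmmS$, so $\DDD^{\aa}\in \DmmS^{\sum \aa}$; as $f\in \DP$ has bounded degree, $\DDD^{\aa}\hook f = 0$ for $\sum\aa$ large enough, so the sum on the right is finite. Next, expand $\varphi(\Dx^{\bb})$: since $\varphi$ is a ring homomorphism and $\Dx_i, D_j$ all commute in $\DS$, the multi-index binomial theorem gives
\[
    \varphi(\Dx^{\bb}) \;=\; \prod_{i=1}^n (\Dx_i + D_i)^{b_i} \;=\; \sum_{\aa\leq \bb}\binom{\bb}{\aa}\Dx^{\bb-\aa}\DDD^{\aa}.
\]
Applying the definition of contraction (Equation~\eqref{eq:hookvsip}) termwise yields
\[
    \ip{\varphi(\Dx^{\bb})}{f} \;=\; \sum_{\aa\leq \bb}\binom{\bb}{\aa}\ip{\Dx^{\bb-\aa}}{\DDD^{\aa}\hook f}.
\]

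For the right-hand side, I would use the divided-power multiplication formula~\eqref{eq:divpowmult} to compute, for any $g\in \DP$ with $g = \sum_{\cc} g_\cc \DPel{\xx}{\cc}$,
\[
    \ip{\Dx^{\bb}}{\DPel{\xx}{\aa}\cdot g} \;=\; \sum_{\cc}g_{\cc}\binom{\aa+\cc}{\aa}\ip{\Dx^{\bb}}{\DPel{\xx}{\aa+\cc}} \;=\; \binom{\bb}{\aa}\,g_{\bb-\aa}
\]
when $\aa\leq \bb$ and $0$ otherwise; and $g_{\bb-\aa} = \ip{\Dx^{\bb-\aa}}{g}$ by definition of the basis. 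Applied with $g = \DDD^{\aa}\hook f$ and summed over $\aa$, this gives exactly the same expression as the expansion of $\ip{\varphi(\Dx^{\bb})}{f}$ above, completing the proof.

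The only subtle point is the bookkeeping with the divided-power binomial coefficients $\binom{\aa+\cc}{\aa}$, which is why the ring structure on $\DP$ must be taken with the divided-power convention rather than the polynomial one — this is precisely what makes the identity $\binom{\bb}{\aa}g_{\bb-\aa}$ appear and match the multinomial expansion of $\varphi(\Dx^{\bb})$ in arbitrary characteristic. I do not expect any genuine obstacle; the argument is a direct pairing computation that, once organized as above, is essentially forced by the commutativity of $\Dx_i$ and $D_j$ inside $\DS$.
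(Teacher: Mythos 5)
Your proposal is correct and is essentially the paper's own proof: both reduce to checking the pairing against monomials, expand $\varphi(\Dx^{\bb})=\prod_i(\Dx_i+D_i)^{b_i}$ by the binomial theorem, and hinge on the same divided-power pairing identity $\ip{\Dx^{\bb}}{\DPel{\xx}{\aa}\cdot g}=\binom{\bb}{\aa}\ip{\Dx^{\bb-\aa}}{g}$, which is the paper's Equation~\eqref{eq:aux} written without the evaluation functional $\varepsilon$ and verified, as you do, on basis elements. Your observation that $D_i\in\DmmS$ makes the sum finite is a small point the paper leaves implicit.
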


\begin{proof}

    We need to show that $\ip{\sigma}{\Ddual{\varphi}(f)} =
    \ip{\varphi(\sigma)}{f}$ for all $\sigma\in \DS$. Since $f\in \DP$, it is
    enough to check this for all $\sigma\in k[\Dx_1, \ldots ,\Dx_n]$. By
    linearity, we may assume that $\sigma = \Dx^{\aa}$.

    For every $g\in \DP$ let $\varepsilon(g) = \ip{1}{g}\in k$.
    We have
\def\parenthese#1{\left( #1 \right)}%
    \[
        \ip{\varphi(\sigma)}{f} = \ip{1}{\varphi(\sigma)\hook f} =
        \varepsilon\parenthese{\varphi(\sigma)\hook f} =
        \varepsilon\parenthese{\sum_{\bb\leq \aa} \binom{\aa}{\bb}\left(\Dx^{\aa -
        \bb}\DDD^{\bb}\right)
    \hook f} = \sum_{\bb\leq \aa}
    \varepsilon\parenthese{\binom{\aa}{\bb}\Dx^{\aa -
    \bb}\hook (\DDD^{\bb}\hook f)}.
    \]
    Consider a term of this sum. Observe that for every $g\in \DP$
    \begin{equation}\label{eq:aux}
        \varepsilon\parenthese{\binom{\aa}{\bb}\Dx^{\aa - \bb}\hook g} =
        \varepsilon\parenthese{\Dx^{\aa} \hook \left( \DPel{\xx}{\bb}\cdot
        g \right)}.
    \end{equation}
    \def\cc{\mathbf{c}}%
    Indeed it is enough to check the above equality for $g = \DPel{\xx}{\mathbf{c}}$
    and both sides are zero unless $\cc = \aa - \bb$, thus it is enough
    to check the case $g = \DPel{\xx}{\aa - \bb}$, which is straightforward.
    Moreover note that if $\bb \not\leq \aa$, then the right hand side is zero
    for all $g$, because $\varepsilon$ is zero for all $\DPel{\xx}{\cc}$ with
    non-zero $\cc$.
    We can use~\eqref{eq:aux} and remove the restriction $\bb\leq \aa$,
    obtaining
    \begin{align*}
        \sum_{\bb} \varepsilon\parenthese{\Dx^{\aa}\hook
        \left(\DPel{\xx}{\bb}\cdot (\DDD^{\bb}\hook f)\right)} =
        \varepsilon\parenthese{ \sum_{\bb} \Dx^{\aa}\hook
        \left(\DPel{\xx}{\bb}\cdot (\DDD^{\bb}\hook f)\right)} =
        \ip{\Dx^{\aa}}{ \sum_{\bb}\DPel{\xx}{\bb}\cdot (\DDD^{\bb}\hook f)}
        =\\
        \ip{\Dx^{\aa}}{\Ddual{\varphi}(f)} = \ip{\sigma}{\Ddual{\varphi}(f)}.
    \end{align*}
\end{proof}

Consider now a derivation $D:\DS\to \DS$. Here and elsewhere a derivation is a $k$-linear map satisfying
the Leibnitz rule. The derivation $D$ gives rise to a dual map
$\Ddual{D}:\DP \to \DP$. We wish to describe it explicitly.
\begin{prop}\label{ref:dualderivation:prop}
    Let $D:\DS\to \DS$ be a derivation and $D_i := D(\Dx_i)$.
    Let $f\in \DP$. Then
    \[
        \Ddual{D}(f) = \sum_{i=1}^n x_i\cdot (D_i\hook f).
    \]
\end{prop}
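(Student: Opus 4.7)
The plan is to mimic the proof of Proposition~\ref{ref:dualautomorphism:prop}: verify the claimed formula by testing against arbitrary $\sigma\in\DS$ via the pairing $\ip{-}{-}$, reduce to the case of a monomial $\sigma=\Dx^{\aa}$, expand $D(\sigma)$ using the Leibnitz rule, and then invoke the commutator identity of Lemma~\ref{ref:commutator:lem} to convert partial derivatives into multiplications by $x_i$. Write $\Ddual{D}$ for the map induced by duality as in~\eqref{eq:dual}; by the uniqueness inherent in~\eqref{eq:dual}, it suffices to show that the right-hand side $g := \sum_{i=1}^n x_i\cdot(D_i\hook f)$ satisfies $\ip{\sigma}{g}=\ip{D(\sigma)}{f}$ for every $\sigma\in\DS$. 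Since $f\in\DP$, both sides depend on $\sigma$ only modulo a sufficiently high power of $\DmmS$, so we may assume $\sigma=\Dx^{\aa}$ is a monomial.

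First I would compute $D(\Dx^{\aa})$. By the Leibnitz rule,
\[
D(\Dx^{\aa})=\sum_{i=1}^{n} a_i\, \Dx_1^{a_1}\cdots \Dx_i^{a_i-1}\cdots\Dx_n^{a_n}\cdot D_i=\sum_{i=1}^n D_i\cdot \ithpartial{\Dx^{\aa}}{i},
\]
so
\[
\ip{D(\Dx^{\aa})}{f}=\sum_{i=1}^n \ip{D_i\cdot \ithpartial{\Dx^{\aa}}{i}}{f}=\sum_{i=1}^n \ip{\ithpartial{\Dx^{\aa}}{i}}{D_i\hook f},
\]
by the defining property~\eqref{eq:hookvsip} of contraction.

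Next I would apply Lemma~\ref{ref:commutator:lem} to the element $D_i\hook f\in\DP$ in place of $f$ and with $\sigma=\Dx^{\aa}$. This yields
\[
\ithpartial{\Dx^{\aa}}{i}\hook (D_i\hook f)=\Dx^{\aa}\hook\bigl(x_i\cdot(D_i\hook f)\bigr)-x_i\cdot\bigl(\Dx^{\aa}\hook(D_i\hook f)\bigr).
\]
Denote $\varepsilon(h)=\ip{1}{h}$ as in the previous proof, so that $\ip{\tau}{h}=\varepsilon(\tau\hook h)$. Applying $\varepsilon$ to the above identity kills the second term, since it lies in $x_i\cdot\DP\subset \DP_{\geq 1}$ and $\varepsilon$ vanishes on every $\DPel{\xx}{\cc}$ with $\cc\neq 0$. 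Hence
\[
\ip{\ithpartial{\Dx^{\aa}}{i}}{D_i\hook f}=\varepsilon\bigl(\Dx^{\aa}\hook(x_i\cdot(D_i\hook f))\bigr)=\ip{\Dx^{\aa}}{x_i\cdot(D_i\hook f)}.
\]
Summing over $i$ and using bilinearity,
\[
\ip{D(\Dx^{\aa})}{f}=\sum_{i=1}^n\ip{\Dx^{\aa}}{x_i\cdot (D_i\hook f)}=\ip{\Dx^{\aa}}{g},
\]
which is exactly what we needed.

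There is no real obstacle: the only mildly delicate point is recognising that the Leibnitz expansion pairs perfectly with Lemma~\ref{ref:commutator:lem}, so that no sum over $\bb$ (as in Proposition~\ref{ref:dualautomorphism:prop}) survives — the derivation case is the linearised, degree-one piece of the automorphism formula. I would also briefly note in the proof that $\Ddual{D}$ is well defined on $\DP$ because the derivation satisfies $D(\DmmS^i)\subset \DmmS^{i-1}$, which fits the framework of~\eqref{eq:dual} with $s=-1$.
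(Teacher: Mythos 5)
Your proposal is correct, and it is precisely the argument the paper intends: the paper's proof of Proposition~\ref{ref:dualderivation:prop} is only the remark that it is ``similar, though easier'' to that of Proposition~\ref{ref:dualautomorphism:prop}, and your writeup faithfully carries out that outline (duality pairing, reduction to monomials $\Dx^{\aa}$, Leibnitz expansion, and the $\varepsilon$-trick). Your only cosmetic deviation is invoking Lemma~\ref{ref:commutator:lem} where the paper's automorphism proof uses the ad hoc identity~\eqref{eq:aux}; these coincide in the case $\bb = e_i$, so this is the same approach, if anything stated slightly more cleanly.
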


\begin{proof}
    The proof is similar, though easier, to the proof of
    Proposition~\ref{ref:dualautomorphism:prop}.
\end{proof}

\begin{remark}\label{ref:lowersdegree:rmk}
    Suppose $D:\DS\to \DS$ is a derivation such that $D(\DmmS) \subseteq
    \DmmS^2$. Then $\deg(\Ddual{D}(f)) < \deg(f)$. We say that $D$ lowers
    the degree.
\end{remark}

\paragraph{Characteristic zero case.}
Let $k$ be a field of characteristic zero. By $\xx^{\aa}$ we denote the
monomial $x_1^{a_1} \ldots x_n^{a_n}$ in the polynomial ring $k[x_1, \ldots
,x_n]$. Then, in the notation of Proposition~\ref{ref:divpowerispoly:prop}, we
have
\[\Dstale\left(\DPel{\xx}{\aa}\right) = \frac{1}{\aa!}\xx^{\aa}.\]
Clearly, an automorphism of $\DS$ gives rise to an linear map $k[x_1, \ldots
,x_n]\to k[x_1, \ldots ,x_n]$.
We may restate Proposition~\ref{ref:dualautomorphism:prop} and
Proposition~\ref{ref:dualderivation:prop} as
\begin{cor}\label{ref:dualautcharzero:cor}
    Let $\varphi:\DS\to\DS$ be an automorphism. Let $D_i = \varphi(\Dx_i) -
    \Dx_i$. For $\aa\in \myN^{n}$ denote $\DDD^{\aa} = D_1^{a_1} \ldots
    D_n^{a_n}$.
    Let $f\in k[x_1, \ldots ,x_n]$.
    Then
    \[
        \Ddual{\varphi}(f) = \sum_{\aa\in \myN^n} \frac{\xx^\aa}{\aa!} \left(
        \DDD^{\aa} \circ f \right) = f + \sum_{i=1}^n x_i (D_i
        \circ f) +  \ldots .
    \]
    Let $D:\DS\to \DS$ be a derivation and $D_i := D(\Dx_i)$. Then
    \[
        \Ddual{D}(f) = \sum_{i=1}^n x_i(D_i\circ f).
    \]
\end{cor}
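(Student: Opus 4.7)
The plan is to deduce the corollary by transporting Proposition~\ref{ref:dualautomorphism:prop} and Proposition~\ref{ref:dualderivation:prop} along the isomorphism $\Omega:\DP\to k[x_1,\ldots,x_n]$ of Proposition~\ref{ref:divpowerispoly:prop}. The key point is that $\Omega$ is simultaneously a ring isomorphism and an $\DS$-module isomorphism, where $\DS$ acts on the source by contraction $\sigma\hook -$ and on the target by differentiation $\sigma\circ -$. Thus for all $\sigma\in \DS$ and all $g,h\in \DP$ we have
\[
\Omega(\sigma\hook g) = \sigma\circ \Omega(g), \qquad \Omega(g\cdot h) = \Omega(g)\cdot \Omega(h), \qquad \Omega(\DPel{\xx}{\aa}) = \frac{\xx^{\aa}}{\aa!}.
\]

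Given any $f\in k[x_1,\ldots,x_n]$, I would apply Proposition~\ref{ref:dualautomorphism:prop} to $\Omega^{-1}(f)\in\DP$, obtaining
\[
\Ddual{\varphi}\bigl(\Omega^{-1}(f)\bigr) \;=\; \sum_{\aa\in\myN^n} \DPel{\xx}{\aa}\cdot \bigl(\DDD^{\aa}\hook \Omega^{-1}(f)\bigr),
\]
then apply $\Omega$ termwise. Using that $\Omega$ is a ring and $\DS$-module homomorphism, the right-hand side becomes $\sum_{\aa} \frac{\xx^{\aa}}{\aa!}(\DDD^{\aa}\circ f)$, while the left-hand side becomes the dual action of $\varphi$ on $f$ viewed in $k[x_1,\ldots,x_n]$, which is the map we want to compute. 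The derivation formula follows the same way from Proposition~\ref{ref:dualderivation:prop}.

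The only piece of work not already recorded in the excerpt is verifying that $\Omega$ intertwines the contraction and differentiation actions of $\DS$ (the check that Proposition~\ref{ref:divpowerispoly:prop} left to the reader). By linearity and the Leibnitz rule it suffices to verify $\Omega(\Dx_i\hook \DPel{\xx}{\aa}) = \partial_{x_i}\Omega(\DPel{\xx}{\aa})$, which is the identity $\Omega(\DPel{\xx}{\aa-e_i}) = a_i\cdot \xx^{\aa}/\aa!\cdot \xx^{-e_i}$, an immediate consequence of $(a_i)!/a_i = (a_i-1)!$ in characteristic zero. Once this is in hand the corollary is a pure transport of structure and no new analysis is required; the substance of the statement already lies in Propositions~\ref{ref:dualautomorphism:prop} and~\ref{ref:dualderivation:prop}.
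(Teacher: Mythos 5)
Your proposal is correct and takes essentially the same route as the paper: the paper offers no separate proof of Corollary~\ref{ref:dualautcharzero:cor}, presenting it as an immediate restatement of Proposition~\ref{ref:dualautomorphism:prop} and Proposition~\ref{ref:dualderivation:prop} under the dictionary $\Omega$ of Proposition~\ref{ref:divpowerispoly:prop}, which is exactly your transport-of-structure argument. Your verification that $\Omega$ intertwines contraction with differentiation (checked on the generators $\Dx_i$ and the basis elements $\DPel{\xx}{\aa}$) is precisely the check the paper leaves to the reader, and it is carried out correctly.
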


\begin{example}
    Let $n = 2$, so that $\DS = k[[\Dx_1, \Dx_2]]$  and consider a linear map $\varphi:\DS \to \DS$ given by
    $\varphi(\Dx_1) = \Dx_1$ and $\varphi(\Dx_2) = \Dx_1 + \Dx_2$. Dually,
    $\Ddual{\varphi}(x_1) = x_1 + x_2$ and $\Ddual{\varphi}(x_2) = x_2$.
    Since $\varphi$ is linear, $\Ddual{\varphi}$ is an automorphism of
    $k[x_1, x_2]$.
    Therefore $\Ddual{\varphi}(x_1^3) = (x_1 + x_2)^3$.
    Let us check this equality using Proposition~\ref{ref:dualautcharzero:cor}.
    We have $D_1 = \varphi(\Dx_1) - \Dx_1 = 0$ and $D_2 = \varphi(\Dx_2) - \Dx_2 = \Dx_1$.
    Therefore $\DDD^{(a, b)} = 0$ whenever $a > 0$ and $\DDD^{(0, b)}
    = \Dx_1^b$.

    We have
    \[
        \Ddual{\varphi}\left( x_1^3 \right) = \sum_{(a, b)\in \mathbb{N}^2}
        \frac{x_1^{a}x_2^{b}}{a!b!}(\DDD^{(a, b)} \circ x_1^3) = \sum_{b\in
            \mathbb{N}} \frac{x_2^{b}}{b!}(\Dx_1^{b} \circ x_1^3) = x_1^3 +
            \frac{x_2}{1} \cdot(3x_1^2) + \frac{x_2^2}{2} \cdot(6x_1) +
            \frac{x_2^3}{6}\cdot(6) = (x_1 + x_2)^3,
    \]
    which indeed agrees with our previous computation.

    When $\varphi$ is not linear, $\Ddual{\varphi}$ is not an endomorphism of
    $k[x_1, x_2]$ and computing it directly from definition becomes harder. For
    example, if $\varphi(\Dx_1) = \Dx_1$ and $\varphi(\Dx_2) = \Dx_2 +
    \Dx_1^2$, then
    \[
        \Ddual{\varphi}(x_1) = x_1,\quad \Ddual{\varphi}(x_1^4) = x_1^4 + 12x_1^2x_2 + 12x_2^2.
    \]
\end{example}

\subsection{Local Artinian Gorenstein algebras}\label{ssec:AGA}

Let $(A, \DmmA, k)$ be a local Artinian Gorenstein algebra. Recall that the \emph{socle degree} of
$A$ is defined as $d =\max\left\{ i\ |\ \DmmA^i\neq 0\right\}$. The \emph{embedding
dimension} of $A$ is $H_A(1) = \dim \DmmA/\DmmA^2$. Note that $A$ may be
presented as a quotient of a power series ring $\DS$ over $k$ if and only $\dim \DS \geq
H_A(1)$. Therefore it is most convenient to analyse the case when $\dim \DS =
H_A(1)$, which we will usually do.

Recall that by Macaulay's inverse systems, for every local Gorenstein quotient $A =
\DS/I$ there is an $f\in \DP$ such that $I = \Ann{f}$, see~\cite{iakanev,
ia94}, \cite[Chapter 21]{Eisenbud} and~\cite[Chapter~IV]{Macaulay_inverse_systems}. Such an $f$ is called
a dual socle generator of $A$. Given $I$, the choice of $f$ is non-unique, but
any two choices differ by a unit in $\DS$. Conversely, for $f\in \DP$ we denote
\[\Apolar{f} = \DS/\Ann{f},\]
which is called the \emph{apolar algebra} of $f$ and $\Ann{f}$ is called the
\emph{apolar ideal} of $f$. It is important that the apolar algebra of the
top degree form of $f$ is a quotient of $\operatorname{gr} A$,
see~\cite[Chapter 1]{ia94}.
We may compare $A$ and $f$ as
follows:
\begin{center}

\begin{tabular}{@{}l l l @{}}
    algebra $A$ && module $\DS f$ \\ \midrule
    $\DS$-module $A$ & $\simeq$ & $\DS f$\\
    $\dim_k A$ &=& $\dim_k \DS f$\\
    socle degree of $A$ &=& $\deg(f)$\\
    certain quotient of $\operatorname{gr} A$ &$ \simeq $&
    $\Apolar{f_{\deg(f)}}$\\
    If for all $i$ we have &\\
    $H_A(d-i) = H_A(i)$ then $\operatorname{gr} A$ &$ \simeq $& $\Apolar{f_{\deg(f)}}$
\end{tabular}
\end{center}

$\DDef{Aut}{\operatorname{Aut}(\DS)}$%
As mentioned in the introduction, our interest is in determining when
two local Artinian Gorenstein quotients of the power series ring are isomorphic.
The following Proposition~\ref{ref:Gorenstein:prop} connects this
problem with the results of the previous section. It is well-known, see
e.g.~\cite{Emsalem}.
Let $\DS^*$ denote the group of invertible elements of $\DS$ and let
\[\Dgrp := \DAut \ltimes \DS^*\] be the group generated by $\DAut$ and
$\DS^*$ in the group of linear operators on $\DS$. As the notation suggests, the group $\Dgrp$ is a semidirect
product of those groups: indeed $\varphi\circ \mu_s \circ \varphi^{-1} =
\mu_{\varphi(s)}$, where $\varphi$ is an automorphism, $s\in \DS$ is invertible and $\mu_{s}$ denotes
multiplication by $s$. By Equation~\eqref{eq:dual} we have an action of $\Dgrp$ on
$\DP$. Here $\DS^*$
acts by contraction and $\DAut$ acts as described in Proposition~\ref{ref:dualautomorphism:prop}.
\def\Ann#1{\operatorname{Ann}\left( #1 \right)}%
\def\orbit#1{\Dgrp\!\cdot\!#1}%
\begin{prop}\label{ref:Gorenstein:prop}
    Let $A = \DS/I$ and $B = \DS/J$ be two local Artinian Gorenstein algebras.
    Choose $f, g\in \DP$ so that $I = \Ann{f}$ and $J = \Ann{g}$.
    The following conditions are equivalent:
    \begin{enumerate}
        \item\label{it:isom} $A$ and $B$ are isomorphic,
        \item\label{it:conj} there exists an automorphism $\varphi:\DS\to \DS$ such that
            $\varphi(I) = J$,
        \item\label{it:dualconj} there exists an automorphism $\varphi:\DS\to \DS$ such that
            $\Ddual{\varphi}(f) = \sigma\hook g$, for an
            invertible element $\sigma\in \DS$.
        \item\label{it:grp} $f$ and $g$ lie in the same $\Dgrp$-orbit of $\DP$.
    \end{enumerate}
\end{prop}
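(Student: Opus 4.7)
The plan is to prove the cycle (\ref{it:isom}) $\Rightarrow$ (\ref{it:conj}) $\Rightarrow$ (\ref{it:dualconj}) $\Rightarrow$ (\ref{it:grp}) $\Rightarrow$ (\ref{it:isom}). The implications (\ref{it:isom}) $\Leftrightarrow$ (\ref{it:conj}) are standard, (\ref{it:dualconj}) $\Leftrightarrow$ (\ref{it:grp}) is a definition-unwinding once the $\Dgrp$-action on $\DP$ is made explicit via~\eqref{eq:dual}, and the substantive work sits in (\ref{it:conj}) $\Leftrightarrow$ (\ref{it:dualconj}).

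For (\ref{it:isom}) $\Rightarrow$ (\ref{it:conj}), I would lift a ring isomorphism $\psi: A\to B$ through the universal property of the power series ring: choose $\sigma_i\in\DmmS$ whose images in $B$ equal $\psi(\overline{\Dx_i})$, adjust them within the kernel of $\DmmS/\DmmS^2\to \mathfrak{m}_B/\mathfrak{m}_B^2$ so that they also span $\DmmS/\DmmS^2$, and define $\varphi(\Dx_i)=\sigma_i$. Then $\varphi$ is an automorphism of $\DS$ with $\varphi(I)\subset J$, and equality follows from $\dim_k \DS/I = \dim_k A = \dim_k B = \dim_k \DS/J$. The converse is immediate: $\varphi(I)=J$ descends to an isomorphism $A\simeq B$.

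For (\ref{it:conj}) $\Leftrightarrow$ (\ref{it:dualconj}), the key step is the identity
\[
    \sigma\hook\Ddual{\varphi}(f) \;=\; \Ddual{\varphi}\bigl(\varphi(\sigma)\hook f\bigr), \qquad \sigma\in\DS,\ f\in\DP,
\]
which I will verify by pairing both sides against an arbitrary $\tau\in\DS$ and applying~\eqref{eq:hookvsip} and~\eqref{eq:definitionofdual}. Since $\Ddual{\varphi}$ is injective, this yields $\Ann{\Ddual{\varphi}(f)}=\varphi^{-1}(\Ann{f})=\varphi^{-1}(I)$. Macaulay's inverse systems, recalled above, give a bijection between ideals of finite colength in $\DS$ and finitely generated $\DS$-submodules of $\DP$ via $I\mapsto\{h\in\DP\mid I\cdot h=0\}$; under this bijection, two cyclic generators of the same submodule differ by a unit contraction. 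Hence the existence of a unit $\sigma\in\DS^*$ with $\Ddual{\varphi}(f)=\sigma\hook g$ is equivalent to $\varphi^{-1}(I)=J$, which after exchanging $\varphi$ and $\varphi^{-1}$ is condition~(\ref{it:conj}).

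For (\ref{it:dualconj}) $\Leftrightarrow$ (\ref{it:grp}), I write a typical element of $\Dgrp=\DAut\ltimes\DS^*$ as $\mu_\sigma\circ\varphi$; by~\eqref{eq:dual} it sends $f\mapsto\sigma\hook\Ddual{\varphi}(f)$, so $g\in\orbit{f}$ iff $\Ddual{\varphi}(f)=\sigma^{-1}\hook g$ for some $\varphi$ and some unit $\sigma$, which is~(\ref{it:dualconj}) after relabeling. The main pitfall is the contravariance of the dual action: $\Ddual{\varphi}$ is indexed by $\varphi$ but acts on annihilators via $\varphi^{-1}$, so one must decide once and for all whether~(\ref{it:conj}) is read as $\varphi(I)=J$ or $\varphi^{-1}(I)=J$. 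The displayed identity pins this down, and after that everything reduces to Macaulay duality or a direct manipulation of the pairing $\ip{-}{-}$.
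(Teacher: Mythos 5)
Your proposal is correct and follows essentially the same route as the paper: lift an isomorphism $A\simeq B$ to an automorphism of $\DS$ by choosing lifts of linear forms, reduce the equivalence of conditions (2) and (3) to computing $\Ann{\Ddual{\varphi}(f)}$ together with the fact that two dual socle generators of the same ideal differ by contraction with a unit, and treat (3) $\Leftrightarrow$ (4) as a reformulation of the $\Dgrp$-action. Your explicit handling of the contravariance --- obtaining $\Ann{\Ddual{\varphi}(f)}=\varphi^{-1}(I)$ rather than $\varphi(I)$, harmless because the conditions are existential in $\varphi$ --- is in fact slightly more careful than the paper's own wording of the same step.
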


\begin{proof}

    Taking an isomorphism $A  \simeq B$, one obtains $\varphi':\DS\to
    B = \DS/I$, which can be lifted to an automorphism of $\DS$ by choosing
    lifts of linear forms. This proves $\ref{it:isom}.\iff \ref{it:conj}$.

    $\ref{it:conj}.\iff\ref{it:dualconj}$. Let $\varphi$ be as
    in~\ref{it:conj}. Then $\Ann{\Ddual{\varphi}(f)} = \varphi(\Ann{f}) =
    \varphi(I) = J$. Therefore the principal $\DS$-submodules of $\DP$ generated by
    $\Ddual{\varphi}(f)$ and $g$ are equal, so that there is an invertible
    element $\sigma\in \DS$ such that $\Ddual{\varphi}(f) = \sigma\hook g$.
    The argument can be reversed.

    Finally, \ref{it:grp}.~is just a reformulation of
    \ref{it:dualconj}.
\end{proof}

The invertible element in Point~\ref{it:dualconj} cannot be discarded, see
Example~\ref{ex:1nn1} and Example~\ref{ex:13331}. The outcome of this theorem is that we are interested
in the orbits of elements of $\DP$ under the $\Dgrp$ action.

\begin{example}[quadrics]\label{ref:examplequarics:ex}
    Let $f\in P_2$ be a quadric of maximal rank. Then
    $\orbit{f}$ is the set of (divided) polynomials of degree two, whose quadric part is
    of maximal rank.
\end{example}

\begin{example}[compressed cubics]\label{ex:1nn1}
    Assume that $k$ has characteristic not equal to two.

    Let $f = f_3 + f_{\leq 2}\in\DP_{\leq 3}$ be an element of degree three
    such $H_{\Apolar{f}} = (1, n, n, 1)$, where $n = H_{\DS}(1)$. Such a
    polynomial $f$ is called
    compressed, see \cite{iaCompressed, EliasRossi_Analytic_isomorphisms} or
    Section~\ref{sec:compressed} for a definition.

    We claim that there is an element $\varphi\in\Dgrp$
    such that $\Ddual{\varphi}(f_3) = f$. This implies that $\Apolar{f}
    \simeq \Apolar{f_{3}} = \gr\Apolar{f}$. We say that the
    apolar algebra of $f$ is \emph{canonically graded}.

    Let $A = \Apolar{f_3}$.
    Since $H_A(2) = n = H_{\DS}(1)$, every linear form in $\DP$ may be obtained as
    $\delta\hook f$ for some operator $\delta\in \DS$ of \emph{order two}, see
    Remark~\ref{ref:socleminusone:rmk} below.
    We pick operators $D_1, \ldots ,D_n$ so that $\sum x_i \cdot (D_i
    \hook f_3) = f_2 + f_1$.
    Explicitly, $D_i$ is such that $D_i\hook f_3 = (\Dx_i \hook f_2)/2 +
    \Dx_i\hook f_1$. Here we
    use the assumption on the characteristic.

    Let $\varphi:\DS\to \DS$ be an automorphism defined via $\varphi(\Dx_i) =
    \Dx_i + D_i$. Since $(D_i D_j)\hook f = 0$ by degree reasons, the explicit formula in
    Proposition~\ref{ref:dualautomorphism:prop} takes the form
    \[
        \Ddual{\varphi}(f_3) = f_3 + \sum x_i \cdot (D_i \hook f_3) = f_3 +
        f_2 + f_1.
    \]
    The missing term $f_0$ is a constant, so that we may pick an order three
    operator $\sigma\in \DS$ with $\sigma\hook \Ddual{\varphi}(f_3) = f_0$.
    Then $(1 + \sigma)\hook (\Ddual{\varphi}(f_3)) = f_3 + f_2 + f_1 + f_0 =
    f$, as claimed.

    The isomorphism $\Apolar{f} \simeq \Apolar{f_3}$ was first proved by Elias and Rossi, see
    \cite[Thm~3.3]{EliasRossiShortGorenstein}. See
    Example~\ref{ex:1nn1Lie} for a more conceptual proof for $k = \mathbb{C}$ and
    Section~\ref{sec:compressed} for generalisations. In particular
    Corollary~\ref{ref:eliasrossi:cor} extends this example to cubic and
    quartic forms.
\end{example}

\begin{remark}[Graded algebras]\label{ref:graded:rem}
    In the setup of Proposition~\ref{ref:Gorenstein:prop} one could specialize
    to homogeneous ideals $I$, $J$ and homogeneous polynomials $f, g\in \DP$.
    Then Condition 1.~is equivalent to the fact that $f$ and $g$ lie in
    the same $\operatorname{GL}(\DmmS/\DmmS^2)$-orbit. The proof of
    Proposition~\ref{ref:Gorenstein:prop} easily restricts to this case, see~\cite{geramita_inverse_systems_of_fat_points}.
\end{remark}

Now we turn our attention to derivations. The notation and motivation come from Lie theory,
see section on characteristic zero below. Let $\DPut{Der}{\mathfrak{aut}}$
denote the space of derivations of $\DS$ preserving $\DmmS$, i.e.~derivations such that $D(\DmmS) \subseteq
\DmmS$. Let $\DS \subset \Homthree{k}{\DS}{\DS}$ be given by
sending $\sigma\in \DS$ to multiplication by $\sigma$.
Let
\[
    \DPut{grptang}{\mathfrak{g}} := \DDer  + \DS,
\]
where the sum is taken in the space of linear maps from $\DS$ to $\DS$.
Then $\Dgrptang$ acts on $\DP$ as defined in Equation~\eqref{eq:dual}.
\def\perpspace#1{\left( #1 \right)^{\perp}}%
\def\tang#1{\Dgrptang #1}%
The space $\Dgrptang$ is naturally the tangent space to the group $\Dgrp$.
Similarly, $\DDer \cdot f$ is
naturally the tangent space of the orbit $\DAut\cdot f$ and $\tang{f}$ is
the tangent space to the orbit $\orbit{f}$ for every $f\in \DP$.

Sometimes it is more convenient to work with
$\DS$ than with $\DP$. For each subspace $W\subseteq \DP$ we may consider the orthogonal space
$W^{\perp} \subseteq \DS$.  Below we describe the linear space
$\perpspace{\tang{f}}$.
For $\sigma\in \DS$ by $\ithpartial{\sigma}{i}$ we denote the $i$-th partial
derivative of $\sigma$. We use the convention that $\deg(0) <
0$.
\begin{prop}[tangent space description]\label{ref:tangentspace:prop}
    Let $f\in \DP$. Then
    \[
        \DDer \cdot f = \spann{ x_i \cdot (\delta\hook f)\ |\ \delta\in \DmmS,\
        i=1, \ldots ,n},\qquad \tang{f} = \DS f + \sum_{i=1}^n \DmmS (x_i\cdot
        f).
    \]
    Moreover
    \[
        \perpspace{\tang{f}} = \left\{ \sigma\in \DS\ |\ \sigma\hook f = 0,\
            \ \forall_i\ \ \deg(\ithpartial{\sigma}{i}\hook f) \leq 0\right\}.
    \]
    Suppose further that $f\in \DP$ is homogeneous of degree $d$. Then  $\perpspace{\tang{f}}$ is
    spanned by homogeneous operators and
    \[
        \perpspace{\tang{f}}_{\leq d} = \left\{ \sigma\in \DS\ |\ \sigma\hook f = 0,\
            \ \forall_i\ \ \ithpartial{\sigma}{i}\hook f = 0 \right\}.
    \]

\end{prop}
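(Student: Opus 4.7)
The plan is to prove the four assertions in sequence, with Proposition~\ref{ref:dualderivation:prop} and the commutator identity of Lemma~\ref{ref:commutator:lem} as the main tools.

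First I would read off the description of $\DDer \cdot f$ directly from Proposition~\ref{ref:dualderivation:prop}: a derivation $D \in \DDer$ satisfies $\Ddual{D}(f) = \sum_{i=1}^{n} x_i \cdot (D(\Dx_i) \hook f)$, and the assignment $D \mapsto (D(\Dx_1), \ldots, D(\Dx_n))$ identifies $\DDer$ with $\DmmS^{n}$, so (1) follows by linearity. Since $\Dgrptang = \DDer + \DS$, we have $\tang{f} = \DDer \cdot f + \DS f$; then Lemma~\ref{ref:commutator:lem} rewritten as
\[
    x_i \cdot (\delta \hook f) = \delta \hook (x_i \cdot f) - \ithpartial{\delta}{i} \hook f
\]
translates between $x_i \cdot (\delta \hook f)$ and $\delta \hook (x_i \cdot f)$ modulo $\DS f$ for every $\delta \in \DmmS$, so both spanning descriptions coincide, giving (2).

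For the orthogonal $\perpspace{\tang{f}}$, I would pair a candidate $\sigma$ against each family of generators of $\tang{f}$ in turn. The condition $\ip{\sigma}{\tau \hook f} = 0$ for all $\tau \in \DS$ rewrites as $\ip{\sigma\tau}{f} = \ip{\tau}{\sigma \hook f} = 0$ for all $\tau$, which is equivalent to $\sigma \hook f = 0$. Granted this, the pairing $\ip{\sigma}{\delta \hook (x_i \cdot f)} = \ip{\sigma\delta}{x_i \cdot f}$ is computed by applying Lemma~\ref{ref:commutator:lem} to the operator $\sigma\delta$, together with the Leibniz rule $\ithpartial{(\sigma\delta)}{i} = \ithpartial{\sigma}{i}\delta + \sigma\ithpartial{\delta}{i}$; the term containing $\sigma \hook f$ vanishes by the first condition and the remainder collapses to $\ip{\delta}{\ithpartial{\sigma}{i} \hook f}$. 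Vanishing for all $\delta \in \DmmS$ says that $\ithpartial{\sigma}{i} \hook f$ is annihilated by $\DmmS$, which is equivalent to $\deg(\ithpartial{\sigma}{i} \hook f) \leq 0$, since the constants $k \cdot \DPel{\xx}{0}$ are exactly the elements of $\DP$ killed by $\DmmS$.

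Finally, for the homogeneous case the formulas in (2) show that $\tang{f}$ is a graded subspace of $\DP$. The pairing of a homogeneous $\sigma_e \in \DS$ against a homogeneous $g \in \tang{f}$ vanishes unless their degrees agree, so every homogeneous component of any $\sigma \in \perpspace{\tang{f}}$ again lies in $\perpspace{\tang{f}}$, which is the spanning statement. For $\sigma \in \perpspace{\tang{f}}_{\leq d}$ decomposed as $\sigma = \sum_{e \leq d} \sigma_e$, each $\ithpartial{\sigma_e}{i} \hook f$ is homogeneous of degree $d - e + 1 \geq 1$, so the constraint $\deg \leq 0$ from (3) forces each such contraction to vanish identically; summing gives $\ithpartial{\sigma}{i} \hook f = 0$, and the reverse containment is immediate. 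The main subtle step is the orthogonality manipulation in the middle paragraph: extracting the partial-derivative condition requires applying Lemma~\ref{ref:commutator:lem} to the product $\sigma\delta$ and recognising the cancellation under the hypothesis $\sigma \hook f = 0$.
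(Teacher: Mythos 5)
Your proposal is correct and follows essentially the same route as the paper: Proposition~\ref{ref:dualderivation:prop} for $\DDer\cdot f$, Lemma~\ref{ref:commutator:lem} to pass between $x_i\cdot(\delta\hook f)$ and $\delta\hook(x_i\cdot f)$ modulo $\DS f$, and the same degree argument in the homogeneous case. The only cosmetic difference is in the orthogonality step, where you apply the commutator identity to the product $\sigma\delta$ and invoke the Leibniz rule, whereas the paper first uses that $\tang{f}$ is an $\DS$-submodule to reduce $\ip{\sigma}{\tang{f}}=0$ to $\sigma\hook\tang{f}=0$ and then applies the identity to $\sigma$ alone; both collapse via the hypothesis $\sigma\hook f=0$ to the same condition $\deg(\ithpartial{\sigma}{i}\hook f)\leq 0$.
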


\begin{proof}
    Let $D\in \DDer$ and $D_i := D(\Dx_i)$.
    By Proposition~\ref{ref:dualderivation:prop} we have $\Ddual{D}(f) =
    \sum_{i=1}^n x_i\cdot (D_i\hook f)$. For any $\delta\in \DmmS$ we may choose
    $D$ so that $D_i = \delta$ and all other $D_j$ are zero. This proves the
    description of $\DDer \cdot f$.
    Now $\tang{f} = \DS f + \spann{ x_i \cdot (\sigma\hook f)\ |\ \sigma\in \DmmS,\
        i=1, \ldots ,n}$. By Lemma~\ref{ref:commutator:lem} we have
        $x_i(\sigma\hook f) \equiv \sigma\hook(x_i f) \mod \DS f$. Thus
        \[\tang{f} = \DS f + \spann{ \sigma\hook (x_i\cdot f)\ |\ \sigma\in \DmmS,\
        i=1, \ldots ,n} = \DS f + \sum \DmmS (x_i f).
    \]
    Now let $\sigma\in \DS$ be an operator such that $\ip{\sigma}{\tang{f}}
    = 0$. This is equivalent to $\sigma\hook (\tang{f}) = 0$, which
    simplifies to $\sigma\hook f = 0$ and $(\sigma\DmmS)\hook (x_i f) = 0$ for all $i$.
    We have $\sigma\hook (x_if) = x_i (\sigma\hook f) +
    \DPut{tmp}{\ithpartial{\sigma}{i}}\hook f = \Dtmp\hook f$, thus
    we get equivalent conditions:
    \[
        \sigma\hook f = 0\quad \mbox{and} \quad \DmmS\hook (\Dtmp \hook f) = 0,
    \]
    and the claim follows. Finally, if $f$ is homogeneous of degree $d$ and $\sigma\in \DS$
    is homogeneous of degree at most $d$ then $\ithpartial{\sigma}{i}\hook
    f$ has no constant term and so $\deg(\ithpartial{\sigma}{i}\hook f) \leq 0$
    implies that $\ithpartial{\sigma}{i}\hook f = 0$.
\end{proof}

\begin{remark}\label{ref:cotangent:rmk}
    Let $f\in \DP$ be homogeneous of degree $d$.
    Let $i \leq d$ and $K_i := \perpspace{\tang{f}}_{i}$.
    Proposition~\ref{ref:tangentspace:prop} gives a useful connection of $K_i$
    with the conormal sequence.
    Namely, let $I = \Ann{f}$ and $B = \Apolar{f} = \DS/I$. We have $(I^2)_i \subseteq
    K_{i}$ and the quotient space fits into the conormal sequence of $\DS\to
    B$, making that sequence exact:
    \begin{equation}\label{eq:cotangent}
        0\to \left(K/I^2\right)_i\to \left( I/I^2 \right)_i \to
        \left(\Omega_{\DS/k}\tensor
        B\right)_i\to \left(\Omega_{B/k}\right)_{i}\to
        0.
    \end{equation}
    This is expected from the point of view of deformation theory.
    Recall that by \cite[Theorem~5.1]{hartshorne_deformation_theory} the
    deformations of $B$ over $k[\varepsilon]/\varepsilon^2$ are in one-to-one
    correspondence with elements of a $k$-linear space $T^1(B/k, B)$. On the other hand,
    this space fits \cite[Prop~3.10]{hartshorne_deformation_theory} into the sequence
    \def\Hom#1#2{\operatorname{Hom}(#1, #2)}%
    \[
        0\to \Homthree{B}{\Omega_{B/k}}{B} \to
        \Homthree{B}{\Omega_{S/k}\tensor B}{B}\to \Homthree{B}{I/I^2}{B}
        \to T^1(B/k, B)\to 0.
    \]
    Since $B$ is Gorenstein, $\Homthree{B}{-}{B}$ is exact and we have
    $T^1(B/k, B)_i  \simeq \Hom{K/I^2}{B}_i$ for all $i \geq 0$.
    The restriction $i\geq 0$ appears because $\Hom{K/I^2}{B}$ is the tangent
    space to deformations of $B$ inside $\DS$, whereas $T^1(B/k, B)$
    parameterises all deformations.
\end{remark}

\DDef{Autunip}{\operatorname{Aut}^+(\DS)}%
\DDef{grpunip}{\Dgrp^+}
\DDef{Derunip}{\mathfrak{aut}^+}%
Now we introduce a certain subgroup $\Dgrpunip$ of $\Dgrp$.
It plays an important part in characteristic zero, because $\Dgrpunip$-orbits
are closed in $\DP$.
This group is also very useful in applications, because it preserves the top
degree form, which allows induction on the degree type arguments.

Each automorphism of $\DS$ induces a linear
map on its cotangent space: we have a restriction $\DAut\to
\DPut{glcot}{\operatorname{GL}(\DmmS/\DmmS^2)}$. Let us denote by $\DAutunip$
the group of automorphisms which act as identity on the tangent space:
$\DAutunip = \left\{ \varphi\in \DS\ |\ \forall_{i}\ \varphi(\Dx_i) -
\Dx_i\in\DmmS^2 \right\}$. We have the following sequence of groups:
\begin{equation}\label{eq:groups}
    1 \to \DAutunip \to \DAut \to \Dglcot\to 1.
\end{equation}
We define
\[\Dgrpunip = \DAutunip \ltimes (1 + \DmmS) \subseteq \Dgrp.\]
Note that we have the following exact sequence:
\begin{equation}\label{eq:unipradical}
    1\to \Dgrpunip \to \Dgrp \to \Dglcot \times k^*\to 1.
\end{equation}

Correspondingly, let $\DPut{Der}{\mathfrak{aut}}$ denote the
space of derivations preserving $\DmmS$, i.e.~derivations such that $D(\DmmS) \subseteq
\DmmS$.
Let $\DDerunip$ denote the space of derivations
such that $D(\DmmS) \subseteq \DmmS^2$. Denoting by
$\DPut{glliecot}{\mathfrak{gl}\left( \DmmS/\DmmS^2 \right)}$ the space of
linear endomorphisms of $\DmmS/\DmmS^2$, we have we following sequence of linear
spaces:
\begin{equation}\label{eq:derivations}
    0\to \DDerunip \to \DDer \to \Dglliecot\to 0.
\end{equation}
We define
\[
    \DPut{grpuniptang}{\Dgrptang^+} = \DDerunip + \DmmS.
\]
Following the proof of Proposition~\ref{ref:tangentspace:prop} we get the
following proposition.
\def\tangunip#1{\Dgrpuniptang #1}%
\def\orbitunip#1{\Dgrpunip\!\cdot\!#1}%
\begin{prop}\label{ref:uniptangentspace:prop}
    Let $f\in \DP$. Then $\tangunip{f} = \DmmS f + \sum \DmmS^2(x_i f)$
    so that
    \[
        \perpspace{\tangunip{f}} = \left\{ \sigma\in \DS\ |\
            \deg(\sigma\hook f) \leq 0,\ \ \forall_i\ \deg(\ithpartial{\sigma}{i}\hook f)\leq 1\right\}.
    \]
    If $f$ is homogeneous of degree $d$ then $\tangunip{f}$ is spanned
    by homogeneous polynomials and
    \[
        \perpspace{\tangunip{f}}_{<d} = \left\{ \sigma\in \DS\ |\
            \sigma\hook f = 0,\ \ \forall_i\ \ithpartial{\sigma}{i}\hook f = 0
        \right\} = \perpspace{\tang{f}}_{<d}.
    \]\qed
\end{prop}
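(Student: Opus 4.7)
The plan is to run the argument of Proposition~\ref{ref:tangentspace:prop} almost verbatim, the only difference being that in the unipotent setting the derivations $D\in\DDerunip$ satisfy $D(\Dx_i)\in\DmmS^2$ (rather than $\DmmS$) and the multiplications come from $\DmmS\subseteq \DS$ (rather than all of $\DS$). This imposes the superscript ``$2$'' on one factor and removes constants from the other, producing the claimed formula $\tangunip{f}=\DmmS f+\sum \DmmS^2(x_i\cdot f)$.

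In detail, for the first equality I would start from $\Dgrpuniptang=\DDerunip+\DmmS$, so that $\tangunip{f}=\DDerunip\cdot f+\DmmS\hook f=\DDerunip\cdot f+\DmmS f$. Applying Proposition~\ref{ref:dualderivation:prop} to a derivation $D\in\DDerunip$ with $D(\Dx_i)=\delta_i\in\DmmS^2$ gives $\Ddual{D}(f)=\sum x_i\cdot(\delta_i\hook f)$, so $\DDerunip\cdot f=\spann{x_i\cdot(\delta\hook f)\mid\delta\in\DmmS^2,\ i=1,\dots,n}$. Now I invoke Lemma~\ref{ref:commutator:lem} to rewrite $x_i\cdot(\delta\hook f)=\delta\hook(x_i\cdot f)-\ithpartial{\delta}{i}\hook f$. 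When $\delta\in\DmmS^2$, the derivative $\ithpartial{\delta}{i}$ lies in $\DmmS$, so $\ithpartial{\delta}{i}\hook f\in\DmmS f$ is absorbed. This yields $\tangunip{f}=\DmmS f+\sum_i\DmmS^2(x_i\cdot f)$ as claimed.

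For the orthogonal description, I would observe that $\sigma\in\perpspace{\tangunip{f}}$ translates, via $\ip{\tau}{g}=\varepsilon(\tau\hook g)$, into two conditions: $\varepsilon(\tau\hook(\sigma\hook f))=0$ for all $\tau\in\DmmS$, and $\varepsilon(\tau\hook(\sigma\hook(x_i\cdot f)))=0$ for all $\tau\in\DmmS^2$ and all $i$. The first says that $\sigma\hook f$ has no nonzero positive-degree component, i.e.~$\deg(\sigma\hook f)\leq 0$. For the second, again by Lemma~\ref{ref:commutator:lem} I rewrite $\sigma\hook(x_i\cdot f)=x_i\cdot(\sigma\hook f)+\ithpartial{\sigma}{i}\hook f$; the first summand has degree $\leq 1$ automatically once $\deg(\sigma\hook f)\leq 0$, so the condition becomes $\deg(\ithpartial{\sigma}{i}\hook f)\leq 1$, proving the displayed description of $\perpspace{\tangunip{f}}$.

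For the homogeneous part, note that $\Dgrpuniptang$ is spanned by operators each of which changes degree by a fixed amount, so $\tangunip{f}$ is spanned by homogeneous elements when $f$ is. If in addition $\deg\sigma<d$, then $\sigma\hook f$ and $\ithpartial{\sigma}{i}\hook f$ are themselves homogeneous, of degrees $d-\deg\sigma\geq 1$ and $d-\deg\sigma+1\geq 2$ respectively, so the degree inequalities $\leq 0$ and $\leq 1$ force these to vanish outright. This gives the first equality in the final display, and the second equality is then immediate by comparing with the formula for $\perpspace{\tang{f}}_{\leq d}$ from Proposition~\ref{ref:tangentspace:prop}. The only mildly delicate point is bookkeeping the degree shift in the second commutator identity, but no new idea beyond Lemma~\ref{ref:commutator:lem} is needed.
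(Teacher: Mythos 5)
Your proposal is correct and matches the paper's intended argument exactly: the paper proves this proposition with the single remark ``Following the proof of Proposition~\ref{ref:tangentspace:prop} we get the following proposition,'' and your write-up carries out precisely that adaptation, substituting $\DmmS$ for $\DS$ and $\DmmS^2$ for $\DmmS$, reusing Lemma~\ref{ref:commutator:lem} for the commutator bookkeeping, and handling the degree estimates in the homogeneous case correctly.
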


\subsection{Characteristic zero}\label{ssec:liegroups}

For simplicity we restrict to $k = \mathbb{C}$ in this section, to freely
speak about Lie groups and algebras. As a general reference we suggest
\cite{fultonharris}.
One technical problem is that the group $\DAut$ is infinite-dimensional. In
this section we implicitly replace $\DS$ with $\DS/\DmmS^{D}$ and $\DAut$ with
the group $\operatorname{Aut}(\DS/\DmmS^{D})$, where $D\gg 0$ is larger than
the degree of any element of $\DP$ considered.
Then $\DAut$ and $\Dgrp$ become Lie groups.

Note that the group $\Dgrpunip\subseteq \operatorname{GL}(\DS)$ is
\emph{unipotent}: the map $\varphi - \operatorname{id}$ is nilpotent for every
$\varphi\in\Dgrpunip$. Therefore, we get the following
theorem.
\begin{thm}\label{ref:closedorbits}
    For every $f\in \DP$ the orbit $\orbitunip{f}$ is closed in $\DP$,
    in both Euclidean and Zariski topologies.
\end{thm}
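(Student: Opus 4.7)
The plan is to reduce to the classical Kostant-Rosenlicht theorem: an orbit of a unipotent algebraic group acting regularly on an affine variety is Zariski-closed. Fix $f\in \DP$ and let $d = \deg f$. Since $\DP_{\leq d}$ is stable under $\Dgrpunip$, and since (as agreed at the start of the section) $\DS$ is truncated to $\DS/\DmmS^{D}$ for some $D\gg d$, the entire orbit $\orbitunip{f}$ lies inside the finite-dimensional affine space $\DP_{\leq d}$, on which the algebraic group $\Dgrpunip$ acts algebraically.

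The central step is to check that $\Dgrpunip$ acts on $\DP_{\leq d}$ by unipotent linear operators. For $\varphi\in \DAutunip$ set $D_i = \varphi(\Dx_i) - \Dx_i\in \DmmS^2$, so $\DDD^{\aa}\in \DmmS^{2|\aa|}$. By Proposition~\ref{ref:dualautomorphism:prop}, $\Ddual{\varphi} = \operatorname{id} + N_\varphi$, where each nontrivial summand first applies $\DDD^{\aa}\hook -$ (lowering degree by at least $2|\aa|$) and then multiplies by $\DPel{\xx}{\aa}$ (raising degree by at most $|\aa|$), yielding a net degree decrease of at least $|\aa|\geq 1$. Likewise, contraction by an element of $1+\DmmS$ is identity plus a strictly degree-lowering operator. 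Consequently every element of $\Dgrpunip$ is upper unitriangular in any basis of $\DP_{\leq d}$ ordered by degree, so $\Dgrpunip$ embeds as a unipotent subgroup of $\operatorname{GL}(\DP_{\leq d})$.

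With unipotence established, the theorem of Kostant and Rosenlicht gives that $\orbitunip{f}$ is Zariski-closed in $\DP_{\leq d}$, and hence Zariski-closed in $\DP$. For $k = \mathbb{C}$ and a finite-dimensional affine space, Zariski-closed implies Euclidean-closed, finishing both assertions.

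The only genuinely nontrivial ingredient is the Kostant-Rosenlicht theorem itself, which rests on the absence of non-trivial rational characters of a unipotent group. Reproving it would take us afield, so I would simply cite it (e.g.\ from Borel's \emph{Linear algebraic groups}). The remaining work is a one-step calculation in the degree filtration on $\DP$, handed to us by Proposition~\ref{ref:dualautomorphism:prop} and the defining property $\varphi(\Dx_i)-\Dx_i\in \DmmS^2$ of $\DAutunip$.
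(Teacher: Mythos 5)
Your proof is correct and takes essentially the same approach as the paper: the paper's entire argument consists of the observation that every $\varphi \in \Dgrpunip$ acts with $\varphi - \operatorname{id}$ nilpotent, followed by an implicit appeal to the classical closed-orbit theorem for unipotent algebraic groups (Kostant--Rosenlicht), exactly the two ingredients you use. Your degree-filtration verification of unipotence via Proposition~\ref{ref:dualautomorphism:prop} and the truncation $\DS/\DmmS^{D}$ simply makes explicit what the paper leaves as a remark.
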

Classically, the Lie algebra corresponding to $\DAut$ is $\DDer$. Moreover by
the exact sequences~\eqref{eq:groups} and~\eqref{eq:derivations}, the Lie
algebra
of $\DAutunip$ is $\DDerunip$. Finally, the Lie algebra of $\Dgrp$ is
$\DDer + \DS = \Dgrptang$ and the Lie algebra of $\Dgrpunip$ is $
\DDerunip + \DmmS = \Dgrpuniptang$.
In particular
\[
    \dim \orbit{f} = \dim \left(\DDer \cdot f + \DS f\right) =
    \dim \left(\DS f + \sum \DmmS(x_i f)\right).
\]
We now give another proof of Elias-Rossi theorem on canonically graded
algebras, in the following Example~\ref{ex:1nn1Lie}.

\begin{example}[compressed cubics, using Lie theoretic ideas]\label{ex:1nn1Lie}
    Assume that $k = \mathbb{C}$.

    Let $f = f_3 + f_{\leq 2}\in\DP_{\leq 3}$ be an element of degree three
    such $H_{\Apolar{f}} = (1, n, n, 1)$, where $n = H_{\DS}(1)$. Such a
    polynomial $f$ is called
    compressed, see \cite{iaCompressed, EliasRossi_Analytic_isomorphisms} and
    Section~\ref{sec:compressed} for a definition. Then $H_{\Apolar{f}}$ is
    symmetric, thus $H_{\Apolar{f_3}} =
    (1, n, n, 1)$ as explained in Section~\ref{ssec:AGA}.

    We claim that there is an element $\varphi$ of $\Dgrp$
    such that $\Ddual{\varphi}(f_3) = f$. This proves that $\Apolar{f}
    \simeq \Apolar{f_{3}} = \gr\Apolar{f}$. We say that the
    apolar algebra of $f$ is \emph{canonically graded}.

    In fact, we claim that already $\orbitunip{f_3}$ is the whole
    space:
    \[\orbitunip{f_3} = f_3 + \DP_{\leq 2}.\]

    From the explicit formula in
    Proposition~\ref{ref:dualautomorphism:prop} we see that $\orbitunip{f_3}
    \subseteq f_3 + \DP_{\leq 2}$. Then it is a Zariski closed
    subset. To prove equality it enough to prove that these spaces have the same
    dimension, in other words that $\tangunip{f_3} = \DP_{\leq 2}$.

    Let $\sigma\in \perpspace{\tangunip{f_3}}_{\leq 2}$ be non-zero. By
    Proposition~\ref{ref:uniptangentspace:prop} we get that $\sigma\hook
    f_3 = 0$ and $\ithpartial{\sigma}{i} \hook f_3 = 0$ for all $i$.
    Then there is a degree one operator annihilating $f_3$. But this
    contradicts the fact that $H_{\Apolar{f_3}}(1) = n = H_{\DS}(1)$.
    Therefore $\perpspace{\tangunip{f_3}}_{\leq 2} = 0$ and the claim
    follows.
\end{example}

\section{Applications}\label{sec:applications}

In this part of the paper we give several new classification results using
the above theory. The two most
important ones are the $t$-compressed algebras in
Section~\ref{sec:compressed}, and algebras having Hilbert
function $(1, 3, 3, 3, 1)$ in Example~\ref{ex:13331}. We begin and end with some
examples computable by hand, which are nevertheless useful, see
in particular the reference in Example~\ref{ex:poweroflinear}.

We make the following assumption
\begin{assumption}
    The field $k$ is algebraically closed. It has characteristic zero or
    greater than the degree of all considered elements of $\DP$.
\end{assumption}
For example, when we analyse cubics, we assume that $k$ has characteristic
different from $2$ or $3$. When analysing degree $d$ polynomials, as in
Example~\ref{ex:poweroflinear}, we assume that
the characteristic is at least $d+1$ or $0$. The fact that $k$ is
algebraically closed is assumed because of various geometrical arguments
involved, such as in Proposition~\ref{ref:topdegreecomp:prop}, and because of
assumptions required for the references: Example~\ref{ex:13331}
refers to \cite{LO} and \cite{cjn13} which both work over an algebraically
closed field. It seems plausible that the classification is feasible over
non-algebraically closed fields, however then there will be more
isomorphism types: already the classifications of homogeneous quadrics
over $\mathbb{C}$ and $\mathbb{R}$ differ.
In contrast,
the assumption that $k$ has large enough characteristic is hardly avoidable
since key results are false without it, see
Example~\ref{ex:1nn1char2} and compare with Example~\ref{ex:1nn1}. This
assumption is frequently used to guarantee that a non-constant element of $\DS$
has a non-zero derivative, which is needed to effectively apply
Proposition~\ref{ref:tangentspace:prop}, see
Proposition~\ref{ref:sp:nosmallorder:prop}.

Under Assumption, we may always think of $\DP$ as a polynomial ring, see
Proposition~\ref{ref:divpowssmallarepoly:prop}.
Before we begin, we make the following observation.
As shown by Equation~\eqref{eq:unipradical}, the group $\Dgrp$ is build from
$\Dgrpunip$, $\Dglcot$ and $k^*$. We will be mostly
concerned with $\Dgrpunip$, in particular we will analyse its orbits. This
often suffices to prove important results as in the cubic case above
(Example~\ref{ex:1nn1}) or below for $t$-compressed algebras.
However sometimes the action of $\Dgrpunip$ is not sufficient and the
difference between this group and $\Dgrp$ becomes the substance of the result.
This is the case for algebras having Hilbert function $(1, 3, 3, 3, 1)$.
The group $\Dgrpunip$ is also convenient because it allows degree-by-degree
analysis which is our main technical tool, see
Proposition~\ref{ref:topdegreecomp:prop}.

When classifying Artinian Gorenstein quotients of $\DS$ with Hilbert function $H$ we will
usually assume that $\dim \DS = H(1)$. This does not reduce generality and
simplifies the argument a bit.

\subsection{Basic examples and tools}

In this section we analyse the $\Dgrp$-orbits of ``easy'' homogeneous
polynomials: those with with small Waring
and border Waring rank.

\begin{example}[Rank one]\label{ex:poweroflinear}
    The simplest example seems to be the case $f = \ell^d$ for some
    linear form $\ell$. We may assume $\ell = x_1$ and apply
    Proposition~\ref{ref:uniptangentspace:prop} to see that
    \[ \perpspace{\tangunip{x_1^d}}_{<d} = (\Dx_2, \ldots ,\Dx_n)^2,
        \qquad
        \tangunip{x_1^d}= \spann{x_1^r m\ |\ m\in \DP_{\leq 1}, r + 1 < d},
    \]
    or, in an invariant form, $\tangunip{\ell^d} = \spann{\ell^r \cdot m\ |\ m\in \DP_{\leq 1},\ r + 1<d}$.
    Using Proposition~\ref{ref:dualautomorphism:prop} one can even compute
    the orbit itself.
    For example, when $d = 4$, the orbit is equal to
    \[
        \orbitunip{\ell^4} = \left\{ \ell^4 + \ell^2\cdot m_1 + \ell\cdot m_2 + m_3 + m_1^2\ |\ m_i\in
            \DP_{\leq 1}\right\}.
    \]
    This example together with Corollary~\ref{ref:leadingformremoval:cor}
    plays an important role in~the paper \cite{cjn13}, compare \cite[Lemma~4.2,
    Example~4.4, Proposition~5.13]{cjn13}.
\end{example}

\def\Dy{\beta}%
\begin{example}[Border rank two]\label{ex:borderintwovars}
    Consider $f = x^{d-1}y$.
    Assume $\DS = k[[\Dx, \Dy]]$ and $\DP = k_{dp}[x, y]$.
    As above, the apolar ideal of $f$ is
    monomial, equal to $(\Dx^d, \Dy^2)$. Using
    Proposition~\ref{ref:uniptangentspace:prop} we easily get that
    \[
        \perpspace{\tangunip{f}}_{<d} = (\Dy^3)_{<d},
    \]
    so $\tangunip{f}$ is spanned by monomials $x^{a}y^{b}$, where $b\leq 2$ and $a
    + b < d$. Note that in contrast with Example~\ref{ex:poweroflinear}, the
    equality $\perpspace{\tangunip{f}}_{<d} = \Ann{f}^2_{<d}$ does
    \emph{not} hold. This manifests the fact that $\Apolar{f}$ is a
    deformation of $\Apolar{x^d + y^d}$.
\end{example}

Before we analyse the rank two polynomials i.e. $x^d + y^d$, we need a few
more observations.

\def\tdf#1{\operatorname{tdf}(#1)}%
Let for every $g\in \DP$ the symbol $\tdf{g}$ denote the top degree form of
$g$,~so that for example $\tdf{x_1^3 + x_2^2x_3 + x_4^2} = x_1^3 + x_2^2x_3$.
\begin{prop}\label{ref:topdegreecomp:prop}
    Let $f\in \DP$. Then the top degree form of every element of $\orbitunip{f}$ is equal to the top degree form of $f$.
    Moreover,
    \begin{equation}\label{eq:tdf}
        \left\{ \tdf{g - f}\ |\ g\in \orbitunip{f} \right\} = \left\{ \tdf{h}\ |\ h\in
    \tangunip{f} \right\}.
    \end{equation}
    If $f$ is homogeneous, then both sides of~\eqref{eq:tdf} are equal to
    the set of homogeneous elements of $\tangunip{f}$.
\end{prop}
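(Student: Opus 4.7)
The plan is to expand $\Ddual{\psi}(f) - f$ via Proposition~\ref{ref:dualautomorphism:prop} and carefully count degrees. Every $\psi\in \Dgrpunip$ factors as $\mu_{1+\tilde\gamma}\circ\varphi$ with $\tilde\gamma\in\DmmS$ and $\varphi\in \DAutunip$ defined by $\varphi(\Dx_i) = \Dx_i + \tau_i$, $\tau_i \in \DmmS^2$. Substitution yields the central decomposition
\[
    \Ddual{\psi}(f) - f \;=\; \tilde\gamma\hook f + \sum_i x_i(\tau_i\hook f) + R,
\]
where $R$ collects terms $x^{[\aa]}(\DDD^\aa \hook f)$ with $|\aa|\geq 2$ and cross-terms $x^{[\aa]}(\DDD^\aa\hook(\tilde\gamma\hook f))$ with $|\aa|\geq 1$.

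For the first claim, a direct order count gives $\deg(\tilde\gamma\hook f)\leq d-1$ (as $\tilde\gamma\in\DmmS$), $\deg(x_i(\tau_i\hook f))\leq d-1$ (as $\tau_i\in\DmmS^2$), and $\deg R \leq d-2$ (since $\DDD^\aa$ has order at least $2|\aa|$). Hence $\Ddual{\psi}(f)-f\in \DP_{\leq d-1}$, so $\tdf{\Ddual{\psi}(f)} = \tdf{f}$.

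For the inclusion $\supseteq$ in the main equality, given $h\in\tangunip{f}$, Proposition~\ref{ref:uniptangentspace:prop} and Lemma~\ref{ref:commutator:lem} let us write $h = \tilde\gamma\hook f + \sum_i x_i(\tau_i\hook f)$ with $\tilde\gamma\in\DmmS$, $\tau_i\in\DmmS^2$. The key substep is that these representatives may be chosen \emph{homogeneous} of orders $r := d-\deg h$ and $r+1$ respectively, still producing the top-degree form $\tdf{h}$; this amounts to the claim that $\tdf{h}\in(\tangunip{f_d})_{\deg h}$. Under such a choice the first two terms have degree exactly $\deg h$ with top piece $\tdf{h}$, while the order count gives $\deg R \leq d-2r < \deg h$; setting $\psi = \mu_{1+\tilde\gamma}\circ \varphi$ then produces $g\in \orbitunip{f}$ with $\tdf{g-f} = \tdf{h}$. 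Conversely for $\subseteq$, given $g=\Ddual{\psi}(f)$, the same decomposition gives $g-f = h + R$ with $h\in\tangunip{f}$: in the generic case the order count gives $\deg R < \deg h$ and hence $\tdf{g-f}=\tdf{h}$, while in the degenerate case (when the leading parts of $\tilde\gamma,\tau_i$ cancel on $f_d$) the same substep identifies $\tdf{g-f}$ as an element of $(\tangunip{f_d})_{\deg(g-f)}\subseteq \{\tdf{h'}:h'\in\tangunip{f}\}$.

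When $f$ is homogeneous, contraction of a homogeneous polynomial by a homogeneous operator remains homogeneous, so $\DmmS f$ and $\DmmS^2(x_i f)$ are graded subspaces of $\DP$; hence $\tangunip{f}$ is graded, $\tdf{h}$ is the top homogeneous component of $h$ and lies itself in $\tangunip{f}$, and both sides coincide with the set of homogeneous elements of $\tangunip{f}$. The hardest step will be the substep $\tdf{h}\in(\tangunip{f_d})_{\deg h}$: although $\tangunip{f}$ a priori receives contributions from all graded pieces $f_m$ of $f$, the requirement that pieces of $h$ in degrees exceeding $\deg h$ vanish rigidifies the top piece to lie in the tangent space at $f_d$, and making this precise by a degree-by-degree combinatorial argument is the main technical content of the proof.
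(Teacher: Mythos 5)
Your proof of the first claim (preservation of the top degree form) is correct. However, your proof of the set equality~\eqref{eq:tdf} rests entirely on the ``key substep'' that $\tdf{h}\in\left(\tangunip{f_d}\right)_{\deg h}$ for every $h\in\tangunip{f}$, equivalently that the representatives $\tilde\gamma,\tau_i$ may be chosen homogeneous of orders $d-\deg h$ and $d-\deg h+1$. You defer this substep to a later ``degree-by-degree combinatorial argument'', and in fact no such argument exists: the substep is false. Take $n=2$ and $f = x_1^4 + x_2^3\in\DP$ (allowed under the standing Assumption), so $d=4$, $f_d = x_1^4$, and take $h = \Dx_2\hook f = x_2^2\in\DmmS f\subseteq\tangunip{f}$. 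Computing from Proposition~\ref{ref:uniptangentspace:prop},
\[
    \tangunip{x_1^4} \;=\; \DmmS\,x_1^4 + \DmmS^2\left(x_1^5\right) +
    \DmmS^2\left(x_1^4x_2\right)
    \;=\; \spann{x_1^3,\ x_1^2x_2,\ x_1^2,\ x_1x_2,\ x_1,\ x_2,\ 1};
\]
indeed $\Dx_2^2$ annihilates $x_1^4$, $x_1^5$ and $x_1^4x_2$, hence annihilates all of $\tangunip{x_1^4}$, so $x_2^2\notin\tangunip{x_1^4}$ and $\tdf{h}=x_2^2\notin\left(\tangunip{f_d}\right)_2 = \spann{x_1^2,\ x_1x_2}$. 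Nevertheless $x_2^2$ does lie in the left-hand side of~\eqref{eq:tdf}, via the orbit element $g = (1+\Dx_2)\hook f = f + x_2^2$. So your prescription for the inclusion $\supseteq$ --- which, by your own reduction, can only produce top degree forms lying in $\tangunip{f_d}$ --- can never reach this $h$, and the ``degenerate case'' of your inclusion $\subseteq$, which invokes the same substep, collapses with it.

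The missing idea is a different pairing between tangent vectors and group elements. You pair $h = \tilde\gamma\hook f + \sum_i x_i(\tau_i\hook f)$ with $\mu_{1+\tilde\gamma}\circ\varphi$, whose error term $R$ is bounded only in terms of $d$ and not of $\deg h$; homogenizing the representatives was your attempt to force $\deg R < \deg h$, and it is precisely this that cannot be done. The paper's proof instead pairs through the exponential: by the Assumption on the characteristic, $\exp$ and $\log$ are mutually inverse bijections between $\Dgrpuniptang$ and $\Dgrpunip$, and for $\varphi = \exp(D)$ one has
\[
    \Ddual{\varphi}(f) - f \;=\; \Ddual{D}(f) + \sum_{i\geq 2}
    \frac{\left(\Ddual{D}\right)^i(f)}{i!},
\]
where $\Ddual{D}$ strictly lowers degree by Remark~\ref{ref:lowersdegree:rmk}, so the tail automatically has degree strictly smaller than $\deg\left(\Ddual{D}(f)\right)$, however small that is, and $\tdf{\Ddual{\varphi}(f)-f} = \tdf{\Ddual{D}(f)}$ holds on the nose. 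Since $D\mapsto\exp(D)$ is a bijection and $\Ddual{D}(f)$ ranges exactly over $\tangunip{f}$, both inclusions of~\eqref{eq:tdf} follow at once, with no homogenization and no case distinction. In the example above this pairing assigns to $h = x_2^2$ the element $\exp(\Dx_2)$, and $\exp(\Dx_2)\hook f - f = x_2^2 + (\text{terms of degree}\leq 1)$ recovers the top degree form $x_2^2$ that your homogenized representatives of orders $2$ and $3$ can never produce. Any repair of your approach that keeps low-order inhomogeneous representatives and iteratively corrects the error would, in effect, reconstruct the exponential.
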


\begin{proof}
    Consider the $\DS$-action on
    $\DP_{\leq d}$. This action descents to an $\DPut{Strunc}{\DS/\DmmS^{d+1}}$ action.
    Further in the proof we implicitly replace $\DS$ by $\DStrunc$, thus also
    replacing $\DAut$ and $\Dgrp$ by appropriate truncations.
    Let $\varphi\in \Dgrpunip$. Since $(\operatorname{id} - \varphi)\left( \DmmS^i \right) \subseteq \DmmS^{i+1}$ for
    all $i$, we have $(\operatorname{id} - \varphi)^{d+1} = 0$. By our global
    assumption on the characteristic $D :=
    \log(\varphi)$ is well-defined and $\varphi = \exp(D)$.
    We get an injective map $\exp:\Dgrpuniptang \to
    \Dgrpunip$ with left inverse $\log$. Since $\exp$ is algebraic we see by
    dimension count that its image is open in $\Dgrpunip$. Since $\log$ is
    Zariski-continuous, we get that $\log(\Dgrpunip) \subseteq \Dgrpuniptang$, then
    $\exp:\Dgrpuniptang \to \Dgrpunip$ is an isomorphism.

    Therefore
    \[
        \Ddual{\varphi}(f) = f + \sum_{i=1}^{d}
        \frac{\left(\Ddual{D}\right)^i(f)}{i!} = f + \Ddual{D}(f) +
        \left(\sum_{i=1}^{d-1} \frac{\left(\Ddual{D}\right)^i}{(i+1)!}\right) \Ddual{D}(f).
    \]
    By Remark~\ref{ref:lowersdegree:rmk} the derivation $D\in \Dgrpuniptang$
    lowers the degree, we see that $\tdf{\Ddual{\varphi}f} =
    \tdf{f}$ and
        $\tdf{\Ddual{\varphi}f - f} = \tdf{\Ddual{D}(f)}$. This proves~\eqref{eq:tdf}.
    Finally, if $f$ is homogeneous then $\tangunip{f}$ is equal to the $\spann{ \tdf{h}\ |\ h\in
    \tangunip{f}}$ by
    Proposition~\ref{ref:uniptangentspace:prop}, and the last claim
    follows.\qedhere

    For an elementary proof, at least for the subgroup $\DAutunip$,
    see~\cite[Proposition~1.2]{Matczuk_unipotent_derivations}.
\end{proof}

The following almost tautological
Corollary~\ref{ref:leadingformremoval:cor} enables one to prove that a
given apolar algebra is canonically graded inductively, by lowering the degree
of the remainder.
\begin{cor}\label{ref:leadingformremoval:cor}
    Let $F$ and $f$ be polynomials. Suppose that the leading
    form of $F - f$ lies in $\tangunip{F}$. Then there is an element $\varphi\in
    \Dgrpunip$ such that $\deg(\Ddual{\varphi}f - F) < \deg(f - F)$.
\end{cor}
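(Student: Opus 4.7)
The plan is to use Proposition~\ref{ref:topdegreecomp:prop} as a one-line engine: the hypothesis provides exactly the datum that proposition requires to produce a $\psi\in\Dgrpunip$ carrying $F$ onto $f$ modulo terms of degree less than $\deg(f-F)$; the desired $\varphi$ is then just $\psi^{-1}$.

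More precisely, set $e=\deg(F-f)$ and $L=\tdf{F-f}$, a homogeneous polynomial of degree $e$. By hypothesis $L\in\tangunip{F}$, and since $\tangunip{F}$ is a linear subspace we also have $-L\in\tangunip{F}$. As $-L$ is homogeneous, $\tdf{-L}=-L$, so Proposition~\ref{ref:topdegreecomp:prop} supplies an element $\psi\in\Dgrpunip$ with $\tdf{\Ddual{\psi}F-F}=-L$. Writing this out,
\[
\Ddual{\psi}F = F - L + r, \qquad f = F - L + s, \qquad \deg r,\ \deg s < e,
\]
so $\deg(\Ddual{\psi}F-f)<e$.

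Now take $\varphi:=\psi^{-1}\in\Dgrpunip$ and rewrite
\[
\Ddual{\varphi}f - F \;=\; \Ddual{\varphi}\bigl(f-\Ddual{\psi}F\bigr),
\]
using $\Ddual{\varphi}\Ddual{\psi}F=F$. It remains to observe that $\Ddual{\varphi}$ does not raise degrees: the explicit formula of Proposition~\ref{ref:dualautomorphism:prop} shows that the $\DAutunip$-factor acts as identity plus strictly degree-lowering operators (since $D_i=\varphi(\Dx_i)-\Dx_i\in\DmmS^2$ contributes $x_i\cdot(D_i\hook-)$, a drop of at least one), and contraction by the unit $1+\sigma$ with $\sigma\in\DmmS$ is also degree non-increasing. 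Hence
\[
\deg(\Ddual{\varphi}f-F) \;\le\; \deg(f-\Ddual{\psi}F) \;<\; e \;=\; \deg(f-F),
\]
as required.

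There is no real obstacle: once Proposition~\ref{ref:topdegreecomp:prop} is available, the corollary is essentially a bookkeeping exercise. The only point worth pinning down is the degree non-increase under $\Dgrpunip$, and even this is implicitly inside the proof of Proposition~\ref{ref:topdegreecomp:prop}, so I would either cite that proof or reprove the one-line estimate from Proposition~\ref{ref:dualautomorphism:prop}.
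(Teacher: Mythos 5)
Your proof is correct and takes essentially the same approach as the paper: both invoke Proposition~\ref{ref:topdegreecomp:prop} to produce an element of $\Dgrpunip$ realizing the prescribed leading form as $\tdf{\Ddual{\psi}(F)-F}$, and then finish by degree bookkeeping. The only difference is cosmetic: the paper chooses the opposite sign ($-G$ for $G$ the leading form of $f-F$) so that applying the resulting element directly to $f$ (using linearity of the action and preservation of top degree forms) already lowers $\deg(f-F)$, whereas you push $F$ onto $f$ and pass to the inverse, which forces your extra --- but correct --- observation that elements of $\Dgrpunip$ never raise degrees.
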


\begin{proof}
    Let $G$ be the leading form of $f - F$ and $e$ be its degree.
    By Proposition~\ref{ref:topdegreecomp:prop} we may find $\varphi\in
    \DAutunip$ such that $\tdf{\Ddual{\varphi}(F) - F} = -G$, so that
    $\Ddual{\varphi}(F) \equiv F - G \mod \DP_{\leq e-1}$.
    By the same proposition we have $\deg(\Ddual{\varphi}(f - F) - (f -
    F)) < \deg(f - F) = e$, so that $\Ddual{\varphi}(f - F) \equiv f - F \mod
    \DP_{\leq e-1}$. Therefore
    $\Ddual{\varphi}(f) -F = \Ddual{\varphi}(F) + \Ddual{\varphi}(f - F) -F \equiv f
    - G - F \equiv 0 \mod \DP_{\leq e-1}$, as claimed.
\end{proof}

\begin{example}[Rank two]\label{ex:xsys}
    \def\Dy{\beta}%
    Let $P = k_{dp}[x, y]$, $S = k[[\Dx, \Dy]]$ and $F = x^d + y^d$ for some $d\geq
    2$. Then $H_{\Apolar{F}} = (1, 2, 2, 2, \ldots , 2, 1)$. We
    claim that the orbit $\orbitunip{F}$ consists precisely of polynomials $f$ having leading form
    $F$ and such that $H_{\Apolar{f}} = H_{\Apolar{F}}$.

    \DDef{sigx}{\ithpartial{\sigma}{x}}%
    \DDef{sigy}{\ithpartial{\sigma}{y}}%

Let us first compute $\perpspace{\tangunip{F}}$. Since $F$ is homogeneous,
    Proposition~\ref{ref:uniptangentspace:prop} shows that
    \[
        \perpspace{\tangunip{F}}_{<d} = \left\{ \sigma\in \DS_{<d}\ |\ \Dsigx\hook
    F = \Dsigy \hook F = \sigma\hook F = 0\right\}.
    \]
    Since $\Ann{F} = (\Dx\Dy, \Dx^d - \Dy^d)$, we see that
    \[
        \perpspace{\tangunip{F}}_{<d} = (\Dx\Dy)^2_{<d}.
    \]

    Now we proceed to the description of $\orbitunip{F}$.
    It is clear that every $f\in \orbitunip{F}$ satisfies the conditions given.
    Conversely, suppose that $f \in \DP$ satisfies these conditions: the
    leading form of $f$ is $F$ and $H_{\Apolar{f}} = H_{\Apolar{F}}$. If
    $d = 2$, then the claim follows from Example~\ref{ref:examplequarics:ex},
    so we may assume $d\geq 3$. By applying $\Dx^{d-2}$ and $\Dy^{d-2}$ to
    $f$, we see that $x^2$ and $y^2$ are leading forms of partials of $f$. Since $H_{\Apolar{f}}(2) =
    2$, these are the only leading forms of partials of degree two.

    Let $G$ be the leading form of $f - F$. Suppose that $G$ contains a
    monomial $x^ay^b$, where $a, b \geq 2$. Then $\Dx^{a-1}\Dy^{b-1}\hook f =
    xy + l$, where $l$ is linear, then we get a contradiction with the
    conclusion of the previous paragraph.

    Since $G$ contains no monomials of the form $x^ay^b$ with $a, b \geq
    2$, we see that $G$ is annihilated by $\perpspace{\tangunip{F}}$,
    so it lies in $\tangunip{F}$.
    By Corollary~\ref{ref:leadingformremoval:cor} we may find $u\in
    \Dgrpunip$ such
    that $\deg(u f - F) < \deg(f - F)$.
    Thus, replacing $f$ by $uf$ we lower the degree of $f - F$.
    Repeating, we arrive to the point where $f - F = 0$, so that $f = F$.
\end{example}

The analysis made in Example~\ref{ex:xsys} may be generalized
to obtain the following Proposition~\ref{ref:squareofideal:prop}.

\def\dimk{\dim_k\,}%
\begin{prop}\label{ref:squareofideal:prop}
    Let $f\in \DP$ be a polynomial with leading form $F$. Let $I = \Ann{F}$.
    Fix an integer $t\geq 0$ and assume that
    \begin{enumerate}
        \item $\dimk\Apolar{f} = \dimk\Apolar{F}$.
        \item we have $\perpspace{\tangunip{F}}_i = I^2_i$ for
            all $i$ satisfying $t \leq i \leq d-1$.
    \end{enumerate}
    Then there is an element $F + g\in \orbitunip{f}$ such that $\deg(g) < t$.
    Equivalently, $\Apolar{f}  \simeq \Apolar{F + g}$ for some polynomial $g$
    of degree less than $t$.
\end{prop}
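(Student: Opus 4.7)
The plan is to argue by induction on $\deg(f-F)$, at each step applying Corollary~\ref{ref:leadingformremoval:cor} to strictly lower this degree until it falls below $t$, and then compose the resulting $\Dgrpunip$-elements.

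First I would reformulate hypothesis (1) as the statement $I = \gr\Ann{f}$. The standard comparison map $\gr\Apolar{f}\twoheadrightarrow \Apolar{F}$ exists because for any homogeneous $\sigma$ of degree $i$ in the initial ideal of $\Ann{f}$, writing a lift as $\tilde\sigma = \sigma + \sigma''$ with $\sigma''\in\DmmS^{i+1}$ and $\tilde\sigma \hook f = 0$, the top-degree part of $\tilde\sigma \hook f$ equals $\sigma\hook F$ and must therefore vanish, so $\sigma\in I$. Equality of dimensions then turns this surjection into an isomorphism, giving $I=\gr\Ann{f}$.

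Now set $e = \deg(f-F)$ and assume $e\geq t$ (otherwise take $g=f-F$ and stop). Let $G$ be the leading form of $f-F$, so $G$ is homogeneous of degree $e\leq d-1$. The heart of the argument is to show $G\in \tangunip{F}$. By Proposition~\ref{ref:uniptangentspace:prop} combined with assumption (2), it suffices to verify $\sigma\hook G = 0$ for every $\sigma\in I^2_e$, and by linearity we may take $\sigma = \alpha\beta$ with $\alpha,\beta\in I$ homogeneous of degrees $a$ and $b=e-a$. Using $I = \gr\Ann{f}$, lift $\alpha$ to $\tilde\alpha = \alpha + \alpha'\in \Ann{f}$ with $\alpha'\in\DmmS^{a+1}$, so $\tilde\alpha\beta\in\Ann{f}$ and hence $(\alpha\beta+\alpha'\beta)\hook f = 0$. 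The ``error term'' vanishes for pure degree reasons: since $\beta\in\Ann{F}$, we have $\beta\hook f = \beta\hook(f-F)\in\DP_{\leq a}$, while $\alpha'$ has order $\geq a+1$, so $\alpha'\beta\hook f = \alpha'\hook(\beta\hook f) = 0$. Therefore $\alpha\beta \hook f = 0$. Since $\alpha\beta\in I$ annihilates $F$ and has degree $e = \deg G$, the lower-degree part of $f-F$ is killed by $\alpha\beta$, so $\alpha\beta\hook f = \alpha\beta\hook G$, yielding $\alpha\beta\hook G = 0$.

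With $G\in \tangunip{F}$ established, Corollary~\ref{ref:leadingformremoval:cor} produces $\varphi\in\Dgrpunip$ with $\deg(\Ddual{\varphi}(f)-F) < e$. Replacing $f$ by $\Ddual{\varphi}(f)$, the leading form is still $F$ by Proposition~\ref{ref:topdegreecomp:prop}, and $\dim_k\Apolar{\Ddual{\varphi}(f)} = \dim_k\Apolar{f}$ by Proposition~\ref{ref:Gorenstein:prop}, so both hypotheses persist and the induction continues until $\deg(f-F)<t$. The main obstacle is the computational key step showing $\alpha\beta\hook G = 0$: merely knowing $\alpha\beta\in I$ only says $\alpha\beta\hook F = 0$, and the crucial cancellation comes from transferring the action via the lift $\tilde\alpha\in\Ann{f}$ and then exploiting that $\beta\hook f$ already has degree small enough to be killed by the higher-order tail $\alpha'$.
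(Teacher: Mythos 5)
Your proof is correct and takes essentially the same approach as the paper's: induction on $\deg(f-F)$ driven by Corollary~\ref{ref:leadingformremoval:cor}, using hypothesis (1) to identify $I$ with the ideal of leading forms of $\Ann{f}$, and then killing $(\alpha\beta)\hook G$ by lifting one factor into $\Ann{f}$ and discarding the correction term for order/degree reasons. The only differences are cosmetic: you lift the first factor where the paper lifts the second, and you make explicit both the surjection $\gr\Apolar{f}\onto\Apolar{F}$ and the persistence of the hypotheses along the induction, which the paper cites or leaves implicit.
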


\begin{proof}
    We apply induction with respect to $\deg(f - F)$.
    If $\deg(f - F) < t$ then we are done.
    Otherwise, it is
    enough to find $u\in \Dgrpunip$ such that $\deg(uf - F) < \deg(f - F)$.

    \def\gr{\operatorname{gr}\,}%
    Since $\Apolar{F}$ is a quotient of $\gr \Apolar{f}$, the first condition
    implies that
    \[\Apolar{F}  \simeq  \gr \Apolar{f},\]
    Thus for every element
    $i\in I = \Ann{F}$ we may find $\sigma\in \DS$ such that $(i + \sigma)\hook f = 0$ and
    $\ord{\sigma} > \ord{i}$.

    Let $G$ be
    the leading form of $f - F$ and $t \leq r \leq d-1$ be its degree. We will
    now prove that $G\in \tangunip{F}$. By
    assumption, it is enough to show that $I^2_r$ annihilates $G$. The ideal
    $I$ is homogeneous so it is enough to show that for any
    elements $i, j\in I$ such that $\deg(ij) = r$ we have
    \[
        (ij)\hook G = 0.
    \]
    Take $\sigma\in \DS$ such that $(j - \sigma)\hook f = 0$. Then
    $\ord{i\sigma} > \ord{ij} = r = \deg(f-F)$, thus $(i\sigma)\hook f = (i\sigma)\hook F =
    0$. Therefore,
    \[
(ij)\hook G = (ij)\hook (F + G) = (ij)\hook f = i(j - \sigma)\hook f = 0.
    \]
    and the claims follow.
\end{proof}

\begin{example}[Rank $n$]\label{ex:rankn}
    Let $f\in \DP = k_{dp}[x_1, \ldots ,x_n]$ be a polynomial of degree $d\geq 4$ with
    leading form $F = x_1^{d} +  \ldots  + x_{n}^d$ and Hilbert function
    $H_{\Apolar{f}} = (1, n, n, n,  \ldots , n, 1)$. Then the apolar algebra of $f$
    is isomorphic to an apolar algebra of $F + g$, where $g$ is a
    polynomial of degree at most three.

    Indeed, let $I = \Ann{F}$. Then the generators of $I_{<d}$ are $\alpha_i
    \alpha_j$ for $i\neq j$, thus the ideals $I_{<d}$ and
    $(I^2)_{<d}$ are monomial. Also the ideal $\perpspace{\tangunip{F}}$ is
    monomial by its description from
    Proposition~\ref{ref:uniptangentspace:prop}.
    The only monomials of degree at least $4$ which do not lie in $I^2$ are of
    the form $\alpha_i^{d-1}\alpha_j$ and these do not lie in
    $\perpspace{\tangunip{F}}$.
    Therefore $I^2_{i} = \perpspace{\tangunip{F}}_i$ for all $4\leq i\leq d-1$.
    The assumptions of Proposition~\ref{ref:squareofideal:prop} are satisfied
    with $t = 4$ and the claim follows.

    It is worth noting that for every $g$ of degree at most three the apolar
    algebra of $F + g$ has Hilbert
    function $(1, n, n,  \ldots , n, 1)$, so the above considerations give
    a full classification of polynomials in $\DP$ with this Hilbert function and leading
    form $F$.

    Since $\perpspace{\tangunip{F}}_3 = \spann{\alpha_i \alpha_j \alpha_l\ |\
    i<j<l}$, we may also assume that $f = F + \sum_{i<j<l} \lambda_{ijl}
    x_ix_jx_l$ for some $\lambda_{ijl}\in k$, see also Example~\ref{ex:13331}.
\end{example}

\subsection{Compressed algebras and
generalisations}\label{sec:compressed}

In this section we use Proposition~\ref{ref:tangentspace:prop} to obtain a
generalisation of wonderful results of Elias and Rossi on being
canonically graded (see Corollary~\ref{ref:eliasrossi:cor})
and answer their question~stated in
\cite[Remark~3.6]{EliasRossi_Analytic_isomorphisms}.

Example~\ref{ex:1nn1} is concerned with a degree three polynomial
$f$ such that the Hilbert function of $\Apolar{f}$ is maximal i.e. equal to
$(1, n, n, 1)$ for $n = H_{\DS}(1)$.  Below we generalise the results obtained
in this example to polynomials of arbitrary degree.

Recall that a local Artinian Gorenstein algebra $A$ of socle degree $d$ (see
Section~\ref{ssec:AGA}) is called \emph{compressed} if
\[H_A(i) = \min\left(H_{\DS}(i), H_{\DS}(d - i)\right) =
    \min\left(\binom{i+n-1}{i}, \binom{d-i+n-1}{d-i}
    \right)\quad\mbox{for all}\ \
i=0, 1,  \ldots , d.\]
Here we introduce a slightly more general notation.

\begin{defn}[$t$-compressed]\label{ref:compressed:def}
    Let $A = S/I$ be a local Artinian Gorenstein algebra of socle degree $d$. Let
    $t\geq 1$.
    Then $A$ is called \emph{$t$-compressed} if the following conditions are
    satisfied:
    \begin{enumerate}
        \item $H_A(i) = H_{\DS}(i) = \binom{i+n-1}{i}$ for all $0\leq i \leq t$,
        \item $H_{A}(d-1) = H_{\DS}(1)$.
    \end{enumerate}
\end{defn}

\begin{example}
    Let $n = 2$. Then $H_A = (1, 2, 2, 1, 1)$ is not $t$-compressed, for any
    $t$. The function $H_A = (1, 2, 3, 2, 2, 2, 1)$ is $2$-compressed. For any
    sequence $*$ the function $(1, 2, *, 2, 1)$ is $1$-compressed.
\end{example}

Note that it is always true that $H_{A}(d-1) \leq H_{A}(1) \leq H_{\DS}(1)$,
thus both conditions above assert that the Hilbert function is maximal
possible. Therefore they are open in $\DP_{\leq d}$.

\def\floor#1{\left\lfloor #1 \right\rfloor}%
\begin{remark}\label{ref:compressedtrivia:rmk}
    The maximal value of $t$, for which $t$-compressed algebras exists, is $t
    = \left \lfloor d/2\right\rfloor$.  Every compressed algebra is
$t$-compressed for $t = \floor{d/2}$ but not
    vice versa. If $A$ is graded, then $H_A(1) = H_A(d-1)$, so the condition $H_{A}(d-1) = H_{\DS}(1)$
    is satisfied automatically.
\end{remark}

The following technical Remark~\ref{ref:socleminusone:rmk} will be useful later. Up to some extent, it
explains the importance of the second condition in the definition of
$t$-compressed algebras.
\begin{remark}\label{ref:socleminusone:rmk}
    \def\DmmA{\mathfrak{m}_A}%
    Let $A = \Apolar{f}$ be a $t$-compressed algebra with maximal ideal $\DmmA$. We have
    $\dim \DP_{\leq 1} = H_{A}(d-1) + H_{A}(d) = \dim \DmmA^{d-1}/\DmmA^d + \dim \DmmA^d =
    \dim \DmmA^{d-1}$. Moreover $\DmmA^{d-1}  \simeq
    \DmmS^{d-1} f$ as linear spaces and $\DmmS^{d-1} f \subseteq
    \DP_{\leq 1}$. Thus
    \[
        \DmmS^{d-1} f = \DP_{\leq 1}.
    \]
\end{remark}

The definition of $t$-compressed algebras explains itself in the following
Proposition~\ref{ref:sp:nosmallorder:prop}.
\begin{prop}\label{ref:sp:nosmallorder:prop}
    Let $f\in \DP$ be a polynomial of degree $d\geq 3$ and $A$ be its
    apolar algebra.
    Suppose that $A$ is $t$-compressed.
    Then the $\Dgrpunip$-orbit of $f$ contains $f + \DP_{\leq t+1}$. In particular
    $f_{\geq t+2} \in \orbitunip{f}$, so that $\Apolar{f}  \simeq \Apolar{f_{\geq t+2}}$.
\end{prop}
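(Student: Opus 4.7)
The plan is to first establish the key containment $\DP_{\leq t+1} \subseteq \tangunip{f}$ using Proposition~\ref{ref:uniptangentspace:prop}, and then to iterate Corollary~\ref{ref:leadingformremoval:cor} in order to realize every element of $f + \DP_{\leq t+1}$ as lying in $\orbitunip{f}$. The ``in particular'' claim is then immediate by taking $g := -f_{\leq t+1}$, whence $f + g = f_{\geq t+2}$.

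For the containment, I use the perfect pairing between (finite-degree pieces of) $\DS$ and $\DP$: it translates $\DP_{\leq t+1} \subseteq \tangunip{f}$ into $\perpspace{\tangunip{f}} \subseteq \DmmS^{t+2}$, i.e.~every nonzero $\sigma \in \perpspace{\tangunip{f}}$ has order at least $t+2$. The main algebraic input is the following translation of $t$-compressedness (Definition~\ref{ref:compressed:def}(i)) to the top form:
\[
    \Ann{f_d} \cap \DS_{\leq t} = \{0\},
\]
obtained by noting that a nonzero $\tau_j \in \DS_j$ with $j \leq t$ annihilating $f_d$ would send $f$ into $\DmmS^{j+1}f \subseteq \DP_{\leq d-j-1}$, violating $H_A(j) = H_\DS(j)$. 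As a consequence, for any nonzero $\tau \in \DS$ of order $j \leq t$ with leading form $\tau_j$, the degree $d - j$ component of $\tau \hook f$ is $\tau_j \hook f_d$, which is nonzero; thus $\deg(\tau \hook f) \geq d - j \geq \lceil d/2 \rceil \geq 2$ by Remark~\ref{ref:compressedtrivia:rmk} and the hypothesis $d \geq 3$.

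Now I take $\sigma \in \perpspace{\tangunip{f}}$ with $s := \ord{\sigma} \leq t+1$ and derive a contradiction with Proposition~\ref{ref:uniptangentspace:prop}. If $s \leq t$, the inequality above with $\tau = \sigma$ gives $\deg(\sigma \hook f) \geq 2 > 0$, contradicting $\deg(\sigma \hook f) \leq 0$. If $s = t+1$, the global characteristic hypothesis forces some partial $\ithpartial{\sigma_{t+1}}{i}$ to be nonzero, so $\ithpartial{\sigma}{i}$ has order exactly $t$; then the inequality with $\tau = \ithpartial{\sigma}{i}$ yields $\deg(\ithpartial{\sigma}{i} \hook f) \geq 2 > 1$, contradicting $\deg(\ithpartial{\sigma}{i} \hook f) \leq 1$. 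This order~$t+1$ case is the main technical hurdle: it is the only step requiring both the characteristic assumption and the partial-derivative condition in Proposition~\ref{ref:uniptangentspace:prop}. Finally, given $g \in \DP_{\leq t+1}$, its leading form lies in $\tangunip{f}$, so Corollary~\ref{ref:leadingformremoval:cor} applied with $F = f$ produces $\varphi_1 \in \Dgrpunip$ with $\deg(\Ddual{\varphi_1}(f+g) - f) < \deg(g)$; the remainder still lies in $\DP_{\leq t+1}$, so iterating finitely many times yields $\psi \in \Dgrpunip$ with $\Ddual{\psi}(f+g) = f$, whence $f + g \in \orbitunip{f}$.
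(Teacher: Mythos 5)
Your overall architecture coincides with the paper's: reduce the Proposition to the containment $\DP_{\leq t+1}\subseteq\tangunip{f}$ (equivalently, $\perpspace{\tangunip{f}}$ contains no non-zero operator of order $\leq t+1$), and then conclude by iterating Corollary~\ref{ref:leadingformremoval:cor}; your iteration step and the ``in particular'' deduction are fine. The gap is that your ``main algebraic input'', $\Ann{f_d}\cap\DS_{\leq t}=\{0\}$, is false, and both branches of your case analysis depend on it. Counterexample: work under the standing Assumption and take $f = x_1^4x_2 + x_2^4\in\DP = k_{dp}[x_1,x_2]$, so $d = 5$ and $n = 2$. One computes $H_{\Apolar{f}} = (1,2,3,2,2,1)$, so $\Apolar{f}$ is $2$-compressed (both conditions of Definition~\ref{ref:compressed:def} hold, with $t = 2 = \lfloor d/2\rfloor$); yet the top degree form $f_5 = x_1^4x_2$ is annihilated by $\Dx_2^2$, a non-zero operator of degree $2\leq t$. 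The error is in the justification ``annihilating $f_d$ would send $f$ into $\DmmS^{j+1}f$'': from $\tau_j\hook f_d = 0$ you may only conclude that $\tau_j\hook f = \tau_j\hook f_{<d}$ has degree $\leq d-j-1$, and a polynomial of that degree need not lie in $\DmmS^{j+1}\hook f$. Indeed, here $\Dx_2^2\hook f$ is a non-zero multiple of $x_2^2$, while $\DmmS^{3}\hook f = \spann{x_1x_2,\ x_1^2,\ x_1,\ x_2,\ 1}$ does not contain $x_2^2$; consistently, $H_A(2) = 3 = H_{\DS}(2)$ is not violated. The derived inequality $\deg(\tau\hook f)\geq d-j$ for every $\tau$ of order $j\leq t$ fails for the same $\tau = \Dx_2^2$, whose contraction with $f$ has degree $2 < 3$, so neither your case $s\leq t$ nor your case $s=t+1$ is established.

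A telling symptom is that your argument never invokes condition (2) of Definition~\ref{ref:compressed:def}, namely $H_A(d-1)=H_{\DS}(1)$, whereas the paper's proof relies on it essentially through Remark~\ref{ref:socleminusone:rmk}, which gives $\DmmS^{d-1}\hook f = \DP_{\leq 1}$. The paper's argument for the containment runs as follows: given a non-zero $\sigma\in\perpspace{\tangunip{f}}$ of order $\leq t+1$, pick a partial derivative $\sigma' := \ithpartial{\sigma}{i}$ that is non-zero of order $\leq t$ (possible by the characteristic assumption applied to the leading form of $\sigma$); Proposition~\ref{ref:uniptangentspace:prop} gives $\deg(\sigma'\hook f)\leq 1$, so by Remark~\ref{ref:socleminusone:rmk} there is $\delta\in\DmmS^{d-1}$ with $\delta\hook f = \sigma'\hook f$; then $\sigma'-\delta$ is a non-zero element of $\Ann{f}$ of order $\leq t$ (as $t\leq\lfloor d/2\rfloor < d-1$), contradicting condition (1), which says precisely that $\Ann{f}\subseteq\DmmS^{t+1}$. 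Your case $s\leq t$ could be repaired in this spirit without condition (2): there the remainder $\sigma\hook f$ is a constant, and a constant always equals $\delta\hook f$ for some $\delta$ of order $d$. But the essential case $s = t+1$ produces a \emph{linear} remainder, and realising a linear polynomial as $\delta\hook f$ with $\ord{\delta} > t$ is exactly what condition (2) guarantees; no statement about $\Ann{f_d}$ alone, true or false, can substitute for it.
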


\begin{proof}
\DDef{tmplowdeg}{\DP_{\leq t+1}}%
    First we show that $\Dtmplowdeg\subseteq\tangunip{f}
    $,~i.e. that no non-zero operator of order at most $t+1$ lies in
    $\perpspace{\tangunip{f}}$.
    Pick such an operator. By Proposition~\ref{ref:uniptangentspace:prop} it
    is not constant. Let $\sigma'$ be any of its non-zero partial
    derivatives. Proposition~\ref{ref:uniptangentspace:prop} asserts that
    $\deg(\sigma'\hook f)\leq 1$.
    Let $\ell := \sigma'\hook f$.
    By Remark~\ref{ref:socleminusone:rmk} every linear polynomial is contained
    in $\DmmS^{d-1} f$. Thus we may choose a $\delta\in \DmmS^{d-1}$ such that
    $\delta\hook f = \ell$. Then $(\sigma' - \delta)\hook f = 0$. Since
    $d\geq 3$, we have $d - 1 > \floor{d/2} \geq t$, so that $\sigma - \delta$ is an operator of order
    at most $t$ annihilating $f$. This contradicts the fact that $H_{A}(i) =
    H_{\DS}(i)$ for all $i\leq t$. Therefore $\Dtmplowdeg\subseteq\tangunip{f}$.

    Second, pick a polynomial $g\in f + \DP_{\leq t+1}$. We prove that $g\in
    \orbitunip{f}$ by induction on $\deg(g - f)$.
    The top degree form of $g - f$ lies
    in $\tangunip{f}$. Using Corollary~\ref{ref:leadingformremoval:cor} we
    find $\varphi\in \Dgrpunip$ such that $\deg(\Ddual{\varphi}(g) - f) <
    \deg(g - f)$.
\end{proof}

For completeness, we state the following consequence of the previous result.
\begin{cor}\label{ref:compressedpoly:cor}
    Let $f\in \DP$ be a polynomial of degree $d\geq 3$ and $A$ be its
    apolar algebra. Suppose that $A$ is compressed. Then $A  \simeq
    \Apolar{f_{\geq \floor{d/2} + 2}}$.
\end{cor}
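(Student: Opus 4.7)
The plan is essentially to cite the two immediately preceding results and assemble them. Specifically, I will apply Proposition~\ref{ref:sp:nosmallorder:prop} with $t = \floor{d/2}$, so the only real task is to verify that a compressed algebra is $\floor{d/2}$-compressed in the sense of Definition~\ref{ref:compressed:def}. This is already claimed in Remark~\ref{ref:compressedtrivia:rmk}, but for the sake of a complete argument I would justify it directly from the two definitions.

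First I would verify the first defining condition of $\floor{d/2}$-compressedness: for $0 \le i \le \floor{d/2}$ we have $d - i \ge i$, and since the Hilbert function $H_{\DS}$ of the power series ring is (weakly) increasing, $H_{\DS}(d - i) \ge H_{\DS}(i)$. Therefore
\[
H_A(i) = \min\bigl(H_{\DS}(i), H_{\DS}(d-i)\bigr) = H_{\DS}(i),
\]
which is exactly what is required. Next I would verify the second condition: since $H_{\DS}(d-1) \ge H_{\DS}(1)$ (again by monotonicity), compressedness gives $H_A(d-1) = \min(H_{\DS}(d-1), H_{\DS}(1)) = H_{\DS}(1)$, as needed.

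Having established that $A$ is $\floor{d/2}$-compressed, I would invoke Proposition~\ref{ref:sp:nosmallorder:prop} with $t = \floor{d/2}$. That proposition concludes that $f_{\geq t + 2} = f_{\geq \floor{d/2} + 2}$ lies in $\orbitunip{f}$, and in particular (via Proposition~\ref{ref:Gorenstein:prop}) that
\[
A = \Apolar{f} \;\simeq\; \Apolar{f_{\geq \floor{d/2}+2}},
\]
which is the desired conclusion.

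There is no serious obstacle here; the work was already done in Proposition~\ref{ref:sp:nosmallorder:prop}, whose proof in turn relied on the key technical inputs (Remark~\ref{ref:socleminusone:rmk} to produce annihilators of linear forms from $\DmmS^{d-1}$, and Corollary~\ref{ref:leadingformremoval:cor} to peel off leading forms one degree at a time). The corollary is simply the specialization of that proposition to the largest admissible value of $t$ under the standard notion of compressedness.
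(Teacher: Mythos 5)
Your proof is correct and follows exactly the paper's route: the paper's own proof is the one-line observation that a compressed algebra is $\floor{d/2}$-compressed (stated in Remark~\ref{ref:compressedtrivia:rmk}) followed by an application of Proposition~\ref{ref:sp:nosmallorder:prop} with $t = \floor{d/2}$. Your additional verification that compressedness implies the two conditions of Definition~\ref{ref:compressed:def}, via monotonicity of $H_{\DS}$, is a correct filling-in of the detail the paper delegates to the remark.
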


\begin{proof}
    The algebra $A$ is
    $\floor{d/2}$-compressed and the claim follows from
    Proposition~\ref{ref:sp:nosmallorder:prop}.
\end{proof}

As a corollary we reobtain the result of Elias and Rossi, see \cite[Thm
3.1]{EliasRossi_Analytic_isomorphisms}.
\begin{cor}\label{ref:eliasrossi:cor}
    Suppose that $A$ is a compressed Artinian Gorenstein local $k$-algebra of
    socle degree $d\leq 4$. Then $A$ is canonically graded~i.e. isomorphic to
    its associated graded algebra $\operatorname{gr} A$.
\end{cor}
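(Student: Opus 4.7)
The plan is to invoke Corollary~\ref{ref:compressedpoly:cor} together with the arithmetic identity $\floor{d/2}+2 = d$ for $d \in \{3,4\}$. Writing $A = \Apolar{f}$ with $\deg f = d$, I would apply Corollary~\ref{ref:compressedpoly:cor} for $d \in \{3,4\}$ to obtain
\[
A \simeq \Apolar{f_{\geq d}} = \Apolar{f_d},
\]
where $f_d$ is the top-degree form of $f$.

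The remaining cases $d \leq 2$ lie outside the hypothesis of Corollary~\ref{ref:compressedpoly:cor} and are handled by inspection. For $d = 0, 1$ one has $A = k$ or $\DmmA^2 = 0$, so $A$ already equals its associated graded. For $d = 2$, Remark~\ref{ref:socleminusone:rmk} gives $\DmmS f_2 = \DP_{\leq 1}$, so one can choose $\sigma_1 \in \DmmS$ with $\sigma_1 \hook f_2 = f_1$; non-degeneracy of $f_2$ (reflected in $H_A(2) = 1$) similarly lets one pick $\sigma_2 \in \DmmS^2$ with $\sigma_2 \hook f_2 = f_0$. Contraction by the unit $1 + \sigma_1 + \sigma_2 \in \DS^*$ then sends $f_2$ to $f$, witnessing $A \simeq \Apolar{f_2}$.

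To conclude, I would identify $\Apolar{f_d}$ with $\operatorname{gr} A$. A compressed algebra has Hilbert function $H_A(i) = \min(H_\DS(i), H_\DS(d-i))$, which is visibly symmetric under $i \leftrightarrow d-i$. By the dictionary at the end of Section~\ref{ssec:AGA}, this symmetry gives $\operatorname{gr} A \simeq \Apolar{f_d}$, closing the chain $A \simeq \Apolar{f_d} \simeq \operatorname{gr} A$.

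The main obstacle is essentially absent, since the substantive work is absorbed into Corollary~\ref{ref:compressedpoly:cor}, which itself rests on Proposition~\ref{ref:sp:nosmallorder:prop}. The present statement is, in the end, the numerical observation that $\floor{d/2}+2 \leq d$ holds precisely for $d \leq 4$; already for $d = 5$ the same route would yield only $A \simeq \Apolar{f_{\geq 4}}$, which need not be graded, matching the fact that the analogous canonical-gradedness fails for compressed algebras of higher socle degree.
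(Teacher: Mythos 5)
Your proposal is correct and is essentially the paper's own argument: both reduce the cases $d=3,4$ to Corollary~\ref{ref:compressedpoly:cor} via the identity $\lfloor d/2\rfloor+2=d$, and treat $d\leq 2$ separately (the paper simply leaves that case to the reader). The only cosmetic difference is the final identification of $\operatorname{Apolar}(f_d)$ with $\operatorname{gr} A$: the paper notes that $\operatorname{Apolar}(f_d)$ is a quotient of $\operatorname{gr} A$ of the same $k$-dimension, while you invoke the symmetry of the compressed Hilbert function together with the dictionary of Section~\ref{ssec:AGA}; both steps are immediate and interchangeable.
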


\begin{proof}
    The case $d\leq 2$ is easy and left to the reader. We assume $d\geq 3$, so
    that $3\leq d\leq 4$.

    Fix $n = H_A(1)$ and choose $f\in \DP = k_{dp}[x_1, \ldots ,x_n]$ such
    that $A  \simeq \Apolar{f}$. This is possible by the existence of standard
    form, see~\cite[Thm~5.3AB]{ia94} or \cite{iaPedro}.
    Let $f_d$ be the top degree part of $f$. Since $\floor{d/2} + 2 = d$,
    Corollary~\ref{ref:compressedpoly:cor} implies that $f_d\in \orbitunip{f}$. Therefore the
    apolar algebras of $f$ and $f_d$ are isomorphic.
    The algebra $\Apolar{f_d}$ is a quotient of $\operatorname{gr}
    \Apolar{f}$. Since $\dimk \operatorname{gr}\Apolar{f} = \dimk \Apolar{f} =
    \dimk \Apolar{f_d}$ it follows that
    \[
        \Apolar{f}  \simeq \Apolar{f_{d}}  \simeq \operatorname{gr}\Apolar{f},
    \]
    which was to be proved.
\end{proof}

The above Corollary~\ref{ref:eliasrossi:cor} holds under the assumptions that
    $k$ is algebraically closed and of characteristic not equal to $2$ or $3$. The
    assumption that $k$ is algebraically closed is unnecessary as proven for
    cubics in Example~\ref{ex:1nn1}, the cases of quartics is similar.
    Surprisingly, the assumption on the characteristic is necessary.

    \begin{example}[compressed cubics in characteristic two]\label{ex:1nn1char2}
        Let $k$ be a field of characteristic two.
        Let $f_3\in \DP_{3}$ be a cubic form such that $H_{\Apolar{f_3}} = (1,
        n, n, 1)$ and $\Dx_1^2\hook f_3 = 0$.
        Then there is a degree three polynomial $f$ with leading form $f_3$,
        whose apolar algebra is compressed but not canonically graded.

        Indeed, take $\sigma = \Dx_1^2$. Then all derivatives of $\sigma$ are
        zero because the characteristic is two. By Proposition~\ref{ref:tangentspace:prop} the element
        $\sigma$ lies in $\perpspace{\tang{f_3}}$. Thus $\tang{f_3}$ does not
        contain $\DP_{\leq 2}$ and so $\orbit{f_3}$ does not contain $f_3 +
        \DP_{\leq 2}$. Taking any $f\in f_3 + \DP_{\leq 2}$ outside the orbit
        yields the desired polynomial. In fact, one can explicitly check that
        $f = f_3 + \DPel{x_i}{2}$ is an example.

        A similar example shows that over a field of characteristic three
        there are compressed quartics which are not canonically graded.
    \end{example}

\paragraph{Compressed algebras of socle degree $d \geq 5$ in two variables.}
    As noted in \cite[Example~3.4]{EliasRossi_Analytic_isomorphisms}, the claim of
    Corollary~\ref{ref:eliasrossi:cor} is false for $d = 5$. Below we explain
    this from the point of view of our theory. First, we give an example of
    a compressed algebra of socle degree five, which is not canonically
    graded.

\begin{example}[$H_A = (1, 2, 3, 3, 2, 1)$, special]\label{ex:123321special}
    Let $n = 2$, $\DP = k_{dp}[x_1, x_2]$ and $\DS = k[[\Dx_1, \Dx_2]]$. Take $F =
    x_1^3x_2^2\in \DP$ and $A = \Apolar{F} = \DS/(\Dx_1^4, \Dx_2^3)$. Since
    $x_1^2, x_1x_2, x_2^2$ are all partials of $F$
    we have $H_{A}(2) = 3$ and the algebra $A$ is compressed.
    It is crucial to note that
    \begin{equation}\label{eq:badelementinstable}
        \Dx_2^4\in \perpspace{\tang{F}}.
    \end{equation}
    Indeed, the
    only nontrivial derivative of this element is $4\Dx_2^3\in \Ann{F}$,
    so~\eqref{eq:badelementinstable} follows from
    Proposition~\ref{ref:tangentspace:prop}.
    Therefore $\orbit{F}$ is strictly contained in $F + \DP_{\leq 4}$. Pick
    any element $f\in
    F + \DP_{\leq 4}$ not lying in $\orbit{F}$.
    The associated graded of $\Apolar{f}$ is equal to $\Apolar{F}$ but
    $\Apolar{f}$ is \emph{not} isomorphic to $\Apolar{F}$, thus
    $\Apolar{f}$ is not canonically graded.
    In fact one may pick $f = F + x_2^4$, so that $\Apolar{f} = \DS/(\Dx_1^4,
    \Dx_2^3 - \Dx_1^3\Dx_2)$.
\end{example}

    On the positive side, Corollary~\ref{ref:compressedpoly:cor} implies that
    every compressed algebra of socle degree $5$ is
    isomorphic to $\Apolar{f}$ for some $f = f_5 + f_4$. Thus we may always
    remove the cubic part of $f$.
    But even more is true: a \emph{general} compressed algebra of socle degree
    five is canonically graded, as shown in Example~\ref{ex:123321general}
    below.

\begin{example}[$H_A = (1, 2, 3, 3, 2, 1)$, general]\label{ex:123321general}
    Let $f\in \DP = k_{dp}[x_1, x_2]$ be a \emph{general} polynomial of degree
    five with respect to the natural affine space structure on $k_{dp}[x_1, x_2]_{\leq 5}$. Then
    $\Apolar{f}$ and $\Apolar{f_5}$ are compressed  by \cite{iaCompressed}. The
    ideal $\Ann{f_5}$ is a complete intersection of a cubic and a quartic.
    Since we assumed that $f$ is general, we may also assume that the cubic
    generator $c$ of $\Ann{f_5}$ is not a power of a linear form.

    We claim that $\Apolar{f}$ is canonically graded. Equivalently, we claim
    that $f\in \orbit{f_5}$. It is enough to show that
    \begin{equation}\label{eq:fivecompressed}
        \orbitunip{f_5} = f_5 + \DP_{\leq 4}.
    \end{equation}
    We will show that $\perpspace{\tangunip{f_5}}_{\leq 4} = 0$.
    This space is spanned by homogeneous elements. Suppose
    that this space contains a non-zero homogeneous $\sigma$.
    Proposition~\ref{ref:uniptangentspace:prop} implies that $\sigma$ is
    non-constant and that all
    partial derivatives of $\sigma$ annihilate $f_5$. These derivatives have
    degree at most three, thus they are multiples of the cubic generator $c$ of
    $\Ann{f_5}$. This implies that all partial derivatives of $\sigma$ are
    proportional, so that $\sigma$ is a power of a linear form. Then also $c$
    is a power of a linear form, which contradicts earlier assumption. We
    conclude that no non-zero $\sigma\in \perpspace{\tangunip{f_5}}_{\leq
    4}$ exists.
    Now, the equation~\eqref{eq:fivecompressed} follows from
    Corollary~\ref{ref:leadingformremoval:cor}.
\end{example}

\begin{example}[general polynomials in two variables of large
    degree]\label{ex:12345large}
    Let $F\in \DP = k_{dp}[x_1, x_2]$ be a homogeneous form of degree $d\geq 9$ and
    assume that no linear form annihilates $F$.
    The ideal $\Ann{F} = (q_1, q_2)$ is a complete intersection. Let $d_i :=
    \deg q_i$ for $i=1, 2$ then $d_1 + d_2 = d + 2$. Since $d_1, d_2 \geq 2$,
    we have $d_1, d_2 \leq d$.

    We claim that the apolar algebra of a general (in the sense explained in
    Example~\ref{ex:123321general}) polynomial $f\in \DP$ with
    leading form $F$ is not canonically graded.
    Indeed, we shall prove that $\orbit{F} \cap (F + \DP_{<d})$ is strictly contained in $F +
    \DP_{<d}$ which is the same as to show that
    $K := \perpspace{\tang{F}}_{<d}$ is non-zero.

    The sequence~\eqref{eq:cotangent} from Remark~\ref{ref:cotangent:rmk}
    becomes
    \[
        0 \to (K/I^2)_{d-1} \to \left(A[-d_1] \oplus A[-d_2]\right)_{d-1}\to
        \left(A\oplus A\right)_{d-2},
    \]
    where the rightmost map has degree $-1$. Note that
    \[
        \dim \left(A[-d_1] \oplus A[-d_2]\right)_{d-1} = (d-1-d_1 + 1) +
        (d-1-d_2+1) = 2d - (d_1 + d_2) = d - 2.
    \]
    By assumption $d - 2 > 6 = \dim \left(A\oplus
        A\right)_{d-2}$,
    so that $(K/I^2)_{d-1} \neq 0$ and our claim follows.
\end{example}

\paragraph{The set of canonically graded algebras is constructible but not
necessarily open nor closed.}

Again consider $\DP_{\leq d}$ as an affine space with Zariski topology.
Suppose that there exists a polynomial of degree $d$ whose apolar algebra is not canonically
graded. Elias and Rossi asked in
\cite[Remark~3.6]{EliasRossi_Analytic_isomorphisms} whether in this case also the
apolar algebra of a \emph{general} polynomial of this degree is not canonically
graded. The following proposition answers this question negatively.

\begin{prop}\label{ref:nopennclosed:prop}
    Fix the degree $d$ and the number of variables $n = \dim
        \DS$.  Consider
        the set $\mathcal{G} \subset \DP_{\leq d}$ of dual socle generators of canonically
    graded algebras of socle degree $d$. This set is irreducible and
    constructible, but in general it is neither open nor closed.
\end{prop}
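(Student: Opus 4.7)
The plan is to realise $\mathcal{G}$ as the constructible image of an algebraic group action, which will give irreducibility and constructibility at once, and then to exhibit counterexamples in two variables of socle degree five showing that $\mathcal{G}$ is in general neither open nor closed.

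First I would identify $\mathcal{G}$ with the image of the action morphism
\[
\mu:\Dgrp\times (\DP_d\setminus\{0\})\to \DP_{\leq d},\qquad (\varphi,g)\mapsto \varphi\cdot g.
\]
If $\Apolar{f}$ is canonically graded of socle degree $d$ then its Hilbert function is symmetric, so $\gr\Apolar{f}=\Apolar{f_d}$ and $\Apolar{f}\simeq\Apolar{f_d}$; Proposition~\ref{ref:Gorenstein:prop} then places $f$ in $\orbit{f_d}$ with $f_d\in \DP_d\setminus\{0\}$. Conversely, for every nonzero homogeneous $g\in\DP_d$ the algebra $\Apolar{\varphi\cdot g}\simeq\Apolar{g}$ is graded, hence canonically graded of socle degree $d$. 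Working modulo $\DmmS^{d+1}$, $\Dgrp$ is a connected algebraic group (iterated semidirect product of $\operatorname{GL}_n$, the unipotent $\DAutunip$, and $\DS^*$), so the source of $\mu$ is irreducible. Thus $\mathcal{G}$ is irreducible, and Chevalley's theorem yields constructibility.

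For non-openness I would take $n=2$, $d=5$ and $F=x_1^3x_2^2\in \DP_{\leq 5}$. Then $F\in\mathcal{G}$ since $F$ is homogeneous. By Example~\ref{ex:123321special} we have $F+x_2^4\notin\orbit{F}$, and the automorphism $\alpha_2\mapsto c\alpha_2$ composed with a scalar from $\DS^*$ carries $F+x_2^4$ to $F+c^2x_2^4$, so also $F+tx_2^4\notin\orbit{F}$ for any $t\in k^*$. If $\Apolar{F+tx_2^4}$ were canonically graded, its associated graded would equal $\Apolar{F}$, placing $F+tx_2^4$ back in $\orbit{F}$ by Proposition~\ref{ref:Gorenstein:prop} — a contradiction. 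Hence the curve $t\mapsto F+tx_2^4$ lies outside $\mathcal{G}$ for $t\ne 0$ and limits to $F\in\mathcal{G}$, so $F$ is not an interior point of $\mathcal{G}$.

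For non-closedness I would keep $n=2$, $d=5$, $F=x_1^3x_2^2$, and choose a homogeneous $H\in\DP_5$ general enough that for Zariski-generic $\epsilon\in k$ the form $F_\epsilon := F+\epsilon H$ satisfies the hypothesis of Example~\ref{ex:123321general} (i.e.\ the cubic generator of $\Ann{F_\epsilon}$ is not a power of a linear form). Example~\ref{ex:123321general} then gives $\orbitunip{F_\epsilon}=F_\epsilon+\DP_{\leq 4}$, so $f_\epsilon:=F_\epsilon+x_2^4\in \orbitunip{F_\epsilon}\subseteq\mathcal{G}$. As $\epsilon\to 0$ the family $f_\epsilon$ limits to $F+x_2^4$, which lies outside $\mathcal{G}$ by the previous paragraph, witnessing that $\mathcal{G}$ is not closed.

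The main obstacle I anticipate in both counterexamples is the implication ``$F+g\notin\orbit{F}$ implies $F+g\notin\mathcal{G}$'' (for $\deg g<d$). The way I would handle it is uniformly: the top degree form of $F+g$ is $F$, so $\gr\Apolar{F+g}\simeq\Apolar{F}$, and canonical gradedness of $\Apolar{F+g}$ is then by Proposition~\ref{ref:Gorenstein:prop} equivalent to $F+g\in\orbit{F}$.
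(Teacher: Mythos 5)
Your proposal is correct, and two of its three parts coincide in substance with the paper's proof: the paper likewise realises $\mathcal{G} = \Dgrp\cdot \DP_{d}$ as the image of $\Dgrp\times\DP_{d}$ (hence irreducible and constructible), and it also deduces non-closedness at $(n,d)=(2,5)$ from Example~\ref{ex:123321special} together with Example~\ref{ex:123321general} --- though it packages this as a density argument (general quintics lie in $\mathcal{G}$, so the closure of $\mathcal{G}$ is all of $\DP_{\leq 5}$, while $x_1^3x_2^2 + x_2^4\notin\mathcal{G}$) rather than as your explicit pencil $f_\epsilon$; the two formulations are interchangeable. Where you genuinely diverge is non-openness: the paper passes to degree $d\geq 9$ and uses the conormal-sequence dimension count of Example~\ref{ex:12345large} to show that a general polynomial with prescribed homogeneous leading form is not canonically graded, whereas you stay at $(2,5)$ and use $\Dgrp$-equivariance under scaling (possible since $k$ is algebraically closed) to propagate the single bad point $F + x_2^4$ of Example~\ref{ex:123321special} along the whole punctured line $\left\{ F + t x_2^4 \ |\ t\neq 0 \right\}$, which then avoids $\mathcal{G}$ and limits to $F\in\mathcal{G}$. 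Your route is more economical --- one pair $(n,d)$ witnesses both failures --- and it exhibits the finer phenomenon that $\mathcal{G}$ can fail to be open even while being dense, which is consistent with Conjecture~\ref{ref:density:conj}; the paper's route costs a higher degree but produces the stronger content of Example~\ref{ex:12345large}, which it wants anyway as input for Proposition~\ref{ref:generalcangradrarely:prop}.

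One step you state as a uniform principle deserves a caveat: the equivalence ``$\Apolar{F+g}$ canonically graded $\iff F+g\in\Dgrp\cdot F$'' (for $\deg g < d$) rests on the identification $\gr\Apolar{F+g} \simeq \Apolar{F}$, which holds here because $F = x_1^3x_2^2$ (and likewise each $F_\epsilon$) is compressed, so its Hilbert function is the maximal one and the surjection $\gr\Apolar{F+g}\onto\Apolar{F}$ is forced to be an isomorphism. For a non-compressed leading form $\Apolar{F}$ is in general only a proper quotient of the associated graded algebra, and the equivalence fails (e.g.\ $F = x_1^3$, $g = x_2^2$). Since Example~\ref{ex:123321special} asserts exactly this identification for your $F$, your citation covers the gap, but the compressedness hypothesis should be named when invoking Proposition~\ref{ref:Gorenstein:prop} in this way.
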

\begin{proof}
        The set $\mathcal{G} = \Dgrp \cdot \DP_{d}$ is the image of $\Dgrp
        \times \DP_{d}$, thus irreducible
and constructible. Example~\ref{ex:123321special} and
Example~\ref{ex:123321general} together show that
for $n=2$, $d = 5$ this set is not closed. Example~\ref{ex:12345large} shows
that for $n=2$, $d \geq 9$ this set is not open.
\end{proof}


    In view of Proposition~\ref{ref:nopennclosed:prop} it is natural
    to ask for which degrees and numbers of variables the
    answer to the question above is positive; the apolar algebra of a \emph{general}
    polynomial in $\DP_{\leq d}$ is a canonically graded algebra.
    Example~\ref{ex:123321general} shows that this happens for
    $n = 2$, $d = 5$. However, the list of cases where the answer may be positive is
    short, as we shall see now.

    \begin{prop}\label{ref:generalcangradrarely:prop}
        Assume that $k$ is a field of characteristic zero.
        Suppose that $n = \dim \DS$ and $d$ are such that the apolar algebra of a general
    polynomial in $\DP_{\leq d}$ is canonically graded. Then $(n, d)$ belongs
    to the following list:
    \begin{enumerate}
        \item $d\leq 4$ and $n$ arbitrary,
        \item $d = 5$ and $n\leq 6$,
        \item $d = 6$ and $n = 2$,
        \item $d$ arbitrary and $n = 1$.
    \end{enumerate}
    \end{prop}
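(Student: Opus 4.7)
The plan is to reduce the hypothesis to a vanishing statement about a tangent space, and then verify that this vanishing fails outside the stated list. By Proposition~\ref{ref:nopennclosed:prop}, the set $\mathcal{G} = \Dgrp \cdot \DP_d$ of dual socle generators of canonically graded algebras of socle degree $d$ is the image of the orbit map $\mu \colon \Dgrp \times \DP_d \to \DP_{\leq d}$, $(\varphi,F) \mapsto \Ddual{\varphi}(F)$. The hypothesis is that $\mathcal{G}$ is dense in $\DP_{\leq d}$. Taking the differential of $\mu$ at $(1,F)$ for a general $F \in \DP_d$ sends $(X, H) \in \Dgrptang \times \DP_d$ to $X \cdot F + H$, whose image is $\tang{F} + \DP_d$. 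Density forces this sum to fill $\DP_{\leq d}$, which, via the perfect pairing between $\DS_{\leq d}$ and $\DP_{\leq d}$, is equivalent to the vanishing
\[
\perpspace{\tang{F}}_{<d} = 0 \qquad \text{for general } F \in \DP_d.
\]

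Next I would use Proposition~\ref{ref:tangentspace:prop}: since $F$ is homogeneous of degree $d$, $\perpspace{\tang{F}}_{<d}$ is spanned by homogeneous $\sigma \in \DS_{<d}$ with $\sigma$ and each first partial $\ithpartial{\sigma}{i}$ lying in $\Ann{F}$. For generic $F$, $\Apolar{F}$ is compressed, hence $\Ann{F}_j = 0$ for $j \leq \lfloor d/2 \rfloor$. A non-zero homogeneous $\sigma$ of degree $m$ therefore requires $m \geq \lfloor d/2 \rfloor + 1$; moreover if $m - 1 \leq \lfloor d/2 \rfloor$ all partials of $\sigma$ must vanish, forcing $\sigma$ constant in characteristic zero and thus $\sigma = 0$. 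Hence $m \geq \lfloor d/2 \rfloor + 2$; combined with $m \leq d - 1$, the admissible interval is empty exactly when $d \leq 4$, settling case~(1). Case~(4), $n = 1$, is immediate: $\Ann{F} = (\Dx_1^{d+1})$ has no non-zero element of degree less than $d$.

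For the remaining $(n, d)$, I would invoke the conormal sequence from Remark~\ref{ref:cotangent:rmk}:
\[
0 \to (K/I^2)_m \to (I/I^2)_m \to (\Omega_{\DS/k} \tensor B)_m \to (\Omega_{B/k})_m \to 0
\]
with $I = \Ann{F}$, $B = \Apolar{F}$, $K = \perpspace{\tang{F}}$. For every admissible $m$ we have $I^2_m = 0$, so $K_m = (K/I^2)_m$ and
\[
\dim K_m = \dim I_m - n \cdot H_B(m-1) + \dim (\Omega_{B/k})_m.
\]
It suffices to show that for each $(n,d)$ outside the list, the right-hand side is strictly positive for some admissible $m$. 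A direct dimension count using the compressed Hilbert function resolves many pairs: for example, at $(n,d) = (7,5)$, $m = 4$ gives $\dim \Ann{F}_4 - n H_B(3) = \binom{10}{4} - 7 - 7 \cdot \binom{8}{2} = 203 - 196 > 0$, already positive before adding the $\Omega_{B/k}$ contribution.

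The principal obstacle is the boundary cases where $\dim I_m - n H_B(m-1) \leq 0$, so the exclusion depends on a strictly positive contribution from $\dim (\Omega_{B/k})_m$. These include $(n, d) = (3, 6)$, whose sole admissible degree $m = 5$ satisfies $\dim I_5 = n H_B(4) = 18$, and $n = 2$, $d \in \{7, 8\}$, where the argument of Example~\ref{ex:12345large} for $d \geq 9$ does not yet apply. For $n = 2$, $B$ is a complete intersection with generator degrees $d_1, d_2$ satisfying $d_1 + d_2 = d + 2$, and $\Omega_{B/k}$ is the cokernel of the Jacobian map $B(-d_1) \oplus B(-d_2) \to B^2$; the failure of Koszul injectivity of this map in Artinian degrees supplies the extra dimension that forces $K_m > 0$. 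For $n \geq 3$, $B$ is generically not a complete intersection, and one needs either the structure of the minimal free resolution of a general compressed Gorenstein algebra, or a direct degeneration/monomial argument along the lines of Example~\ref{ex:rankn}. This combinatorial and resolutional verification across the boundary pairs is the technical heart of the proof.
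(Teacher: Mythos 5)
Your opening reduction coincides with the paper's: density in characteristic zero gives surjectivity of the differential of $\Dgrp\times \DP_d\to \DP_{\leq d}$ at $(1,F)$ for general $F$, hence $\mathfrak{g}F+\DP_d=\DP_{\leq d}$, hence the vanishing of $(\mathfrak{g}F)^{\perp}_{<d}$. The gap is in how you try to contradict this vanishing. The paper does \emph{not} count inside $I=\Ann{F}$: it counts in all of $\DS_{d-1}$. A homogeneous $\sigma$ of degree $d-1$ lies in $(\mathfrak{g}F)^{\perp}$ as soon as every partial derivative $\sigma^{(i)}$ annihilates $F$, because in characteristic zero Euler's formula $\sigma=(d-1)^{-1}\sum_i \Dx_i\sigma^{(i)}$ makes the condition $\sigma\hook F=0$ redundant. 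Each condition $\sigma^{(i)}\hook F=0$ is at most $H_{\Apolar{F}}(d-2)=H_{\Apolar{F}}(2)\leq\binom{n+1}{2}$ linear conditions, so a nonzero $\sigma$ exists whenever $n\binom{n+1}{2}<\binom{n+d-2}{d-1}=\dim\DS_{d-1}$, and this single inequality holds for \emph{every} $(n,d)$ outside the list, boundary pairs included; no genericity or compressedness of $F$ is even needed. That one count finishes the proof.

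Your count instead restricts to $I_{d-1}$ via the conormal sequence, i.e.\ it imposes the redundant condition $\sigma\in I$ first, which costs exactly $H_B(d-1)=n$ dimensions. As you observe, the resulting bound $\dim I_{d-1}-nH_B(d-2)$ fails to be positive precisely at $(n,d)=(3,6),(2,7),(2,8)$, and these cases are left unproved. Moreover, the rescue you propose cannot work as set up: by the very exact sequence you use, $\dim K_{d-1}=\dim I_{d-1}-nH_B(d-2)+\dim(\Omega_{B/k})_{d-1}$, so for $(3,6)$ and $(2,8)$ (where the first two terms cancel) proving $(\Omega_{B/k})_{d-1}\neq 0$ is \emph{equivalent} to proving $K_{d-1}\neq 0$, and for $(2,7)$ you would need $\dim(\Omega_{B/k})_6\geq 2$; deferring this to ``the structure of the minimal free resolution'' or a degeneration argument is circular hand-waving, not a proof. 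The missing idea is again Euler: in characteristic zero the Euler derivation induces a surjection $(\Omega_{B/k})_{d-1}\onto B_{d-1}$, so $\dim(\Omega_{B/k})_{d-1}\geq H_B(d-1)=n$, which restores exactly the $n$ dimensions your restriction to $I$ discarded and recovers the paper's count --- but this step is absent from your write-up, and without it the theorem is unproved in the three boundary cases. A minor point: your verifications for $d\leq 4$ and $n=1$ address the converse implication, which the proposition does not assert, and can be dropped.
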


    \begin{proof}
        Fix $n$, $d$ outside the above list and suppose that $\Dgrp\cdot
        \DP_{d}$ is dense in $\DP_{\leq d}$. Since we are over a field of
        characteristic zero, the tangent map $\Dgrp \times \DP_{d} \to
        \DP_{\leq d}$ at a general point $(g, F)$ is surjective. Then by
        $\Dgrp$-action, this map is surjective also at $(1, F)$.
        Its image is $\tang{F} + \DP_{d}$. Thus
        \begin{equation}\label{eq:rubbsh}
            \tang{F} + \DP_{d} = \DP_{\leq d}.
        \end{equation}
        Let us now analyse $\tang{F}$.
        By Proposition~\ref{ref:tangentspace:prop} the equations of $\tang{F}$
        are given by $\sigma\in \DS$ satisfying
        $\sigma\hook F = 0$ and
        \begin{equation}
            \ithpartial{\sigma}{i}\hook F = 0\quad\mbox{for all}\quad i=1,
            \ldots ,n.
            \label{eq:allderivs}
        \end{equation}

        Consider $\sigma\in \DS$ homogeneous of degree $d-1$. The space
        $\Ann{F}_{d-2}$ is of codimension $H_{\Apolar{F}}(d-2) =
        H_{\Apolar{F}}(2) \leq \binom{n+1}{2}$. Thus for fixed $i$ the condition
        $\ithpartial{\sigma}{i}\hook F = 0$ amounts to at most
        $\binom{n+1}{2}$ linear conditions on the coefficients of $\sigma$.
        Summing over $i=1, \ldots ,n$ we get $n\cdot \binom{n+1}{2}$
        conditions.
        Now, if $(n, d)$ is outside the list, then
        \[
            n\cdot \binom{n+1}{2} < \binom{n-1+d-1}{d-1},
        \]
        so that there exists a non-zero $\sigma$
        satisfying~\eqref{eq:allderivs}. Since $\sigma = (d-1)^{-1}\sum
        \Dx_i\ithpartial{\sigma}{i}$ also $\sigma\hook F = 0$. Thus
        $\sigma\in \perpspace{\tang{F}}$ is a non-zero element.
        Therefore $\tang{F}$ does not contain the whole $\DP_{\leq d-1}$,
        which contradicts~\eqref{eq:rubbsh}.
    \end{proof}

    We have some computational evidence that in all cases listed in
    Proposition~\ref{ref:generalcangradrarely:prop} a general polynomial indeed gives
    a canonically graded algebra. Of course for $d\leq 4$ it is the result of
    Elias and Rossi reproved in Corollary~\ref{ref:eliasrossi:cor}.
    We put forward the following conjecture.

    \begin{conjecture}\label{ref:density:conj}
        Assume that $k$ is a field of characteristic not equal to $2, 3, 5$.
        The pairs $(n, d)$ such that the apolar algebra of a general
    polynomial in $\DP_{\leq d}$ is canonically graded are precisely the pairs
    listed below:
    \begin{enumerate}
        \item $d\leq 4$ and $n$ arbitrary,
        \item $d = 5$ and $n\leq 6$,
        \item $d = 6$ and $n = 2$.
        \item $d$ arbitrary and $n = 1$.
    \end{enumerate}
    Here $n = \dim \DS$ is the number of variables.
    \end{conjecture}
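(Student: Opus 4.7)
\noindent\textbf{Proof proposal for Conjecture~\ref{ref:density:conj}.}
The ``only if'' direction is Proposition~\ref{ref:generalcangradrarely:prop}; its argument adapts to any characteristic not dividing $k$ for $1 \leq k \leq d-1$, and for each listed pair this is implied by the hypothesis $\mathrm{char}\,k \notin \{2,3,5\}$. The plan is therefore to establish the ``if'' direction: for every listed pair $(n,d)$ and a general $f \in \DP_{\leq d}$, the apolar algebra $\Apolar{f}$ is canonically graded. Following the template of Example~\ref{ex:123321general}, it suffices to fix a general homogeneous $F \in \DP_d$ and prove
\[
    \perpspace{\tangunip{F}}_{<d} = 0.
\]
Indeed, Corollary~\ref{ref:leadingformremoval:cor} applied inductively then gives $F + \DP_{<d} \subseteq \orbitunip{F}$, and since the top form of a general $f \in \DP_{\leq d}$ is a general $F \in \DP_d$, we get $\Apolar{f} \simeq \Apolar{F} = \gr \Apolar{f}$.

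By Proposition~\ref{ref:uniptangentspace:prop} the desired vanishing amounts to showing that, for every $1 \leq k \leq d-1$, no nonzero homogeneous $\sigma \in \DS_k$ satisfies $\sigma \hook F = 0$ together with $\ithpartial{\sigma}{i}\hook F = 0$ for all $i$. The characteristic hypothesis makes Euler's identity $k\sigma = \sum_i \Dx_i\, \ithpartial{\sigma}{i}$ available and reduces the first condition to the others. The problem therefore becomes injectivity of the linear map
\[
    \DS_k \longrightarrow \bigoplus_{i=1}^n \DS_{k-1}\big/\Ann{F}_{k-1}, \qquad \sigma \longmapsto \bigl(\ithpartial{\sigma}{i} \bmod \Ann{F}_{k-1}\bigr)_{i=1}^n,
\]
for every such $k$. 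Injectivity is Zariski-open in $F$, so by semi-continuity it is enough to produce one $F \in \DP_d$ witnessing it.

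For $n=1$ one takes $F = \DPel{x_1}{d}$ and checks the claim by direct inspection. For $d \leq 4$ and arbitrary $n$ it is Corollary~\ref{ref:eliasrossi:cor}. The remaining cases form the finite list $(n,d) \in \{(2,5),(3,5),(4,5),(5,5),(6,5),(2,6)\}$, each of which is amenable to an explicit verification: take $F$ to be a generic sum of $d$-th powers of sufficiently many linear forms, so that $\Apolar{F}$ is compressed, and check the injectivity above with a computer algebra system. A conceptual proof would prefer a structural argument: for $n=2$ the apolar ideal is a complete intersection and one can hope to exploit its minimal free resolution directly; for $n \geq 3$ the map in question is closely tied to catalecticant-type morphisms, so Terracini-style tangent calculations on the Veronese variety seem a natural avenue.

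The main difficulty concentrates in the borderline pair $(n,d)=(6,5)$, where the naive count $n\binom{n+1}{2} = \binom{n+d-2}{d-1}$ is an equality ($126=126$): injectivity is expected only just, and the failure locus could a priori be a Weil divisor in $\DP_5$, so bare dimension counting will not suffice. A plausible route is to specialise $F$ to a form with a large finite symmetry (for instance an invariant for a finite subgroup of $\operatorname{GL}_6$) so that the map decomposes into isotypic blocks, each of smaller size. The pair $(2,6)$ is analogous in spirit but noticeably more tractable thanks to the complete-intersection structure in two variables. I expect the genuine content of the conjecture to lie in these tight cases, while the ``interior'' cases should yield to the generic computation sketched above.
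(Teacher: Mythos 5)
First, a framing point: this statement is a \emph{conjecture} in the paper, not a theorem. The paper proves only the ``only if'' half (Proposition~\ref{ref:generalcangradrarely:prop}, under the assumption of characteristic zero) and explicitly describes the ``if'' half as resting on machine computations that the author finds ``far from satisfactory''; there is no paper proof to compare against. Your proposal does not change this status. The reduction you set up is correct and is exactly the paper's own strategy: by Proposition~\ref{ref:uniptangentspace:prop} and Euler's identity, $\perpspace{\tangunip{F}}_{<d}=0$ is equivalent to injectivity of $\sigma\mapsto(\ithpartial{\sigma}{i}\bmod \Ann{F})_i$ in each degree $k<d$; this is an open condition on $F$, and Corollary~\ref{ref:leadingformremoval:cor} then absorbs all lower-degree terms, so one witness $F$ per pair $(n,d)$ suffices. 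The cases $d\leq 4$ (Corollary~\ref{ref:eliasrossi:cor}) and $n=1$ are indeed already known. But the remaining six pairs $(2,5),(3,5),(4,5),(5,5),(6,5),(2,6)$ are the entire content of the conjecture, and for these you write ``check the injectivity with a computer algebra system,'' while for the two tight pairs $(6,5)$ and $(2,6)$ --- where, as you correctly note, the count $n\binom{n+1}{2}=\dim \DS_{d-1}$ is an equality, so injectivity is not forced by dimensions --- you offer only a ``plausible route'' via symmetry. No witness $F$ is produced and no vanishing is verified; moreover, since the conjecture is asserted for every characteristic outside $\{2,3,5\}$, a verification over a single field of characteristic zero would not suffice: one needs an argument (for instance an integral computation whose relevant minors are units away from small primes) valid for all primes $p\geq 7$ simultaneously. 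As it stands, the proposal is a proof program, not a proof; it reproduces the state of knowledge the paper already records.

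A second, independent gap: you claim the argument of Proposition~\ref{ref:generalcangradrarely:prop} ``adapts'' to all characteristics allowed by the hypothesis. The paper's proof of that proposition uses characteristic zero in an essential way: from dominance of the map $\Dgrp\times\DP_d\to\DP_{\leq d}$ it infers surjectivity of the tangent map at a general point, which is generic smoothness and can fail in positive characteristic. Euler's identity is not the only place the characteristic enters, so the ``only if'' half in characteristic $p\geq 7$ also requires an argument (separability of the orbit map, or a direct substitute) that you have not supplied.
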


    We have some supporting evidence in all cases $d = 5$, $n\leq 6$ and $d = 6$,
    $n=2$. Namely, for a pseudo-randomly chosen form $F$
    a machine computation of $\perpspace{\tang{F}}$ via
    Proposition~\ref{ref:tangentspace:prop} reveals that
    \[\tang{F} \supset  \DP_{\leq d-1}.\] Thus arguing by semicontinuity we get that
    the map $\Dgrp \times \DP_{d} \to \DP_{\leq d}$ is dominating and the
    conjecture follows. However we find
    the use of computer here far from satisfactory, since it dims the reasons
    for this result. Hopefully someone would invent a computer-free proof.

\paragraph{Improvements using symmetric decomposition.}

The condition $H_{A}(d-1) = n$ in definition of $t$-compressed algebras may be
slightly weakened, which is sometimes useful. The price one pays is a more
technical assumption. We present the result below.
An example of use of Proposition~\ref{ref:sp:improvenosmallorder:prop}
is given in Example~\ref{ex:1222111}.
For information on the symmetric decomposition $\Delta_{r}$ of the Hilbert function,
see \cite{ia94}, \cite{cjn13} or \cite[Section~2]{BJMR}. In short, the Hilbert
function $H$ of an apolar algebra of socle degree $d$ admits a canonical decomposition $H =
\sum_{i=0}^{d} \Delta_i$, where $\Delta_i$ is a vector of length
$d-i$ which is symmetric: $\Delta_i(j) = \Delta_i(d-i-j)$ for all $j$.
\begin{prop}\label{ref:sp:improvenosmallorder:prop}
    Let $f\in \DP$ be a polynomial of degree $d\geq 3$ and $A$ be its
    apolar algebra. Let $\Delta_{\bullet}$ be the symmetric decomposition of
    $H_A$.
    Suppose that
    \begin{enumerate}
        \item $H_A(r) = H_{\DS}(r) = \binom{r + n - 1}{r}$ for all $0\leq r \leq t$.
        \item $\Delta_{r}(1) = 0$ for all $d - 1 - t \leq r$.
    \end{enumerate}
    Then the $\Dgrpunip$-orbit of $f$ contains $f + \DP_{\leq t+1}$. In particular
    $f_{\geq t+2} \in \orbitunip{f}$.
\end{prop}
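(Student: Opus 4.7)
The plan is to mimic the proof of Proposition~\ref{ref:sp:nosmallorder:prop}, substituting for the linear-form surjectivity of Remark~\ref{ref:socleminusone:rmk} a weaker consequence of the symmetric-decomposition hypothesis. The key intermediate claim is
\[
    \DS f \,\cap\, \DP_{\leq 1} \;=\; \DmmS^{t+1} f\,\cap\,\DP_{\leq 1},
\]
i.e.\ every linear or constant form reachable from $f$ is already reachable by operators of order at least $t+1$.

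To establish this, I would unwind the standard apolar interpretation of the symmetric decomposition (see~\cite{ia94, cjn13}): the duality $A \simeq \DS f$ translates the definition of $\Delta_a$ into
\[
    \Delta_a(i) \;=\; \dim\,\frac{\DmmS^i f \,\cap\, \DP_{\leq d-a-i}}{\DmmS^{i+1} f \,\cap\, \DP_{\leq d-a-i} \;+\; \DmmS^i f \,\cap\, \DP_{\leq d-a-i-1}}.
\]
Using the symmetry $\Delta_a(1) = \Delta_a(d-a-1)$ and setting $k = d-a-1$, the vanishing $\Delta_a(1) = 0$ for $a \geq d-1-t$ becomes
\[
    \DmmS^{k}f \,\cap\,\DP_{\leq 1} \;=\; \DmmS^{k+1}f\,\cap\,\DP_{\leq 1} \;+\; \DmmS^{k}f\,\cap\,\DP_{\leq 0}
\]
for every $0 \leq k \leq t$. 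Iterating these equalities from $k=0$ up to $k=t$, and noting that the constants in $\DS f$ form the one-dimensional socle of $A$, equal to $\DmmS^d f \subseteq \DmmS^{t+1}f$, proves the intermediate claim.

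With the claim in hand, the rest of the argument reads off directly from Proposition~\ref{ref:sp:nosmallorder:prop}. Suppose $\sigma\in\perpspace{\tangunip{f}}$ is nonzero of order at most $t+1$. By Proposition~\ref{ref:uniptangentspace:prop} it is non-constant, so under our standing characteristic assumption some partial $\sigma' := \ithpartial{\sigma}{i}$ is nonzero of order at most $t$ and satisfies $\deg(\sigma'\hook f)\leq 1$. The claim supplies $\delta\in\DmmS^{t+1}$ with $\delta\hook f = \sigma'\hook f$, so $\sigma' - \delta$ is a nonzero element of $\Ann{f}$ of order at most $t$, contradicting $H_A(r)=H_\DS(r)$ for $r\leq t$. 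Hence $\DP_{\leq t+1}\subseteq\tangunip{f}$, and Corollary~\ref{ref:leadingformremoval:cor} combined with induction on $\deg(g-f)$ places every $g\in f + \DP_{\leq t+1}$ into $\orbitunip{f}$.

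The main obstacle is the first step: pinning down the apolar formula for $\Delta_a(1)$ and verifying that the iteration lands in $\DmmS^{t+1}f$ rather than stopping short. Once this is done, the tangent-space calculation and the degree-reduction argument via Corollary~\ref{ref:leadingformremoval:cor} are an exact replay of Proposition~\ref{ref:sp:nosmallorder:prop}.
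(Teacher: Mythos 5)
Your proof is correct and is essentially the paper's own argument: the paper likewise converts the hypothesis $\Delta_r(1)=0$ for $r\geq d-1-t$ into the statement that every linear polynomial obtainable from $f$ already lies in $\DmmS^{t+1}f$ (citing the filtration-by-linear-forms description of $\Delta_{\bullet}(1)$ from \cite{BJMR,cjn13} rather than unwinding the definition as you do), and then reruns the proof of Proposition~\ref{ref:sp:nosmallorder:prop} with this fact in place of Remark~\ref{ref:socleminusone:rmk}. The dual formula you write for $\Delta_a(i)$ is indeed the correct apolar form of Iarrobino's definition, so the step you flag as the main obstacle does go through, and your marginally weaker intermediate claim about $\DS f\cap \DP_{\leq 1}$ (rather than all of $\DP_{\leq 1}$) suffices exactly as you use it, since the linear form to be matched is itself a partial of $f$.
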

\begin{proof}
In the notation of \cite[Section~2]{BJMR} or \cite{cjn13} we see that
\def\Cfilt#1{C_{#1, 1}(1)}%
\[
    \Delta_a(1) = \dim \Cfilt{a}/\Cfilt{a-1},\quad \mbox{where}\quad
    \Cfilt{a} =
    \left\{\mbox{ linear polynomials in }\DmmS^{d - 1 - a}f \right\}.
\]
Moreover, the assumption $H_A(1) = n$ guarantees that $\Cfilt{d-2} =
\DP_{\leq 1}$ is the full space of linear polynomials. The assumption $\Delta_{r}(1) = 0$ for
all $d - 1 - t\leq r \leq d - 2$ shows that
\begin{equation}\label{eq:highdegreeops}
    \DP_{\leq 1} = \Cfilt{d-2} = \Cfilt{d-3}=  \ldots = \Cfilt{d-2 - t}=
    \DmmS^{t+1} f.
\end{equation}
Thus, for every $\ell\in \DP_{\leq 1}$ we have a $\delta$ of order greater
than $t$ such that $\delta\hook f = \ell$.

Now we repeat the proof of Proposition~\ref{ref:sp:nosmallorder:prop} with one
difference: instead of referring to Remark~\ref{ref:socleminusone:rmk} we
use~Equation \eqref{eq:highdegreeops} to obtain, for a given linear form $\ell$, an
element $\delta\in \DS$ of order greater than $t$ and such that $\delta\hook f
= \ell$.
\end{proof}

\subsection{Further examples}

Below we present some more involved examples, which employ the
tools developed in the previous section.

\begin{example}[Hilbert function $(1, 2, 2, 1)$]\label{ex:1221}
    By the result of Elias and Rossi (Proposition~\ref{ref:eliasrossi:cor}) every apolar algebra
    with Hilbert function $(1, 2, 2, 1)$ is canonically graded. Then it is
    isomorphic to $\Apolar{x^3 + y^3}$ or $\Apolar{x^2y}$ and these algebras
    are not isomorphic.

    The example was treated in \cite[p.~12]{Casnati_Notari_6points} and \cite[Prop~3.6]{EliasRossiShortGorenstein}.
\end{example}

\begin{example}[Hilbert function $(1, 2, 2, 2, 1)$]\label{ex:12221}
    Let $f\in \DP_{\leq 4}$ be a polynomial such that $H_{\Apolar{f}} = (1, 2,
    2, 2, 1)$. If $f_4 = x^4 + y^4$ then if fact $\Apolar{f}  \simeq
    \Apolar{f_4}$ by Example~\ref{ex:xsys}, so we may assume $f_4 =
    x^3y$. Since $f$ is $1$-compressed we may also
    assume $f_{\leq 2} = 0$, whereas by
    Corollary~\ref{ref:leadingformremoval:cor} together with
    Example~\ref{ex:borderintwovars} we may assume $f_3 = cy^3$ for $c\in k$. Thus
    \[
        \Apolar{f} \simeq \Apolar{x^3y + cy^3}.
    \]
    By multiplying variables by suitable constants, we may assume $c = 0$ or $c
    =1$ and obtain three possibilities:
    \[
        f = x^4 + y^4,\qquad f = x^3y,\qquad f = x^3y + y^3.
    \]
    Note that the three
    types appearing above are pairwise non-isomorphic. Indeed, an isomorphism
    may only occur between $\Apolar{x^3y}$ and $\Apolar{x^3y + y^3}$.
    Suppose that such exists, so that there is $\varphi\in \Dgrp$ such that
    $\Ddual{\varphi}(x^3y) = x^3y + y^3$. Write $\varphi = gu$, for $g$
    linear
    and $u\in \Dgrpunip$. Then $g$ preserves $x^3y$ and a direct
    check shows that $g$ is diagonal. Then
    $\Ddual{u}(x^3y) = x^3y + cy^3$ for some non-zero $c\in k$ and $y^3 =
    c^{-1}\tdf{u(x^3y) - x^3y}
    \in \tangunip{(x^3y)}$, by Proposition~\ref{ref:topdegreecomp:prop}. This is a
    contradiction with Example~\ref{ex:borderintwovars}.

    This example was analysed, among others,
        in~\cite{Casnati__Isomorphisms_types_of_aG_to_nine}, where Casnati
    classifies all Artinian Gorenstein algebras of length at most $9$. See
    especially \cite[Theorem~4.4]{Casnati__Isomorphisms_types_of_aG_to_nine}.
\end{example}

\begin{example}[Hilbert function (1, 2, 2,  \ldots , 2, 1),
    general]\label{ex:122lotsof21}
    Consider the set of polynomials $f\in \DP = k_{dp}[x, y]_{\leq d}$ such
    that $H_{\Apolar{f}} = (1, 2,  \ldots , 2, 1)$
    with $d-1$ twos occurring. This set is irreducible in Zariski topology,
    by~\cite[Proposition~4.8]{cjn13} which uses \cite[Theorem
    3.13]{iarrobino_punctual}. The leading form of a general element of this
    set has, up to a coordinate change, the form $x^d + y^d$.
    Example~\ref{ex:xsys} shows that the apolar algebra of such an element is
    canonically graded.
\end{example}

\begin{remark}\label{ref:isomorphismtypesintwovars:rmk}
    In general, there are $d-1$ isomorphism types of
    almost-stretched algebras of socle degree $d$ and with Hilbert function
    $(1, 2, 2,  \ldots , 2, 1)$ as proved
    in~\cite[Theorem~4.4]{Casnati__Isomorphisms_types_of_aG_to_nine}; see also
    \cite[Remark~5., p.~447]{EliasVallaAlmostStretched}.
    The claim is recently generalised by Elias and Homs,
    see~\cite{EliasHoms}.
\end{remark}

\begin{example}[Hilbert function $(1, 3, 3, 3, 1)$]\label{ex:13331}
    Consider now a polynomial $f\in \DP = k_{dp}[x, y, z]$ whose Hilbert function is
    $(1, 3, 3, 3, 1)$.
    Let $F$ denote the leading form of $f$.
    By \cite{LO} or \cite[Prop~4.9]{cjn13} the form $F$ is linearly equivalent to
    one of the following:
    \[
        F_1 = x^4 + y^4 + z^4,\quad F_2 = x^3y + z^4,\quad F_3 = x^2(xy + z^2).
    \]

    Since $\Apolar{f}$ is $1$-compressed, we have
    $\Apolar{f}  \simeq \Apolar{f_{\geq 3}}$; we may assume that the quadratic part
    is zero. In fact by the explicit description of top degree form in
    Proposition~\ref{ref:topdegreecomp:prop} we see that
    \[\orbitunip{f} = f + \tangunip{F} + P_{\leq 2}.\]
    Recall that $\Dgrp/\Dgrpunip$ is the product of the group of linear
    transformations and $k^*$ acting by multiplication.

    \paragraph{The case $F_1$.} Example~\ref{ex:rankn} shows that
    $\perpspace{\tangunip{F}}_{\leq 3}$ is spanned by $\alpha\beta\gamma$.
    Therefore we may assume $f = F_1 + c\cdot xyz$ for some $c\in k$. By
    multiplying variables
    by suitable constants and then multiplying whole $f$ by a constant, we may
    assume $c = 0$ or $c = 1$. As before, we get
    two non-isomorphic algebras. Summarising, we got two isomorphism types:
    \[
        f_{1,0} = x^4 + y^4 + z^4,\quad f_{1,1} = x^4 + y^4 + z^4 + xyz.
    \]
    Note that $f_{1, 0}$ is canonically graded, whereas $f_{1,1}$ is a
    complete intersection.

    \paragraph{The case $F_2$.} We have $\Ann{F_2}_2 = (\alpha\gamma,
    \beta^2, \beta\gamma)$, so that $\perpspace{\tangunip{F_2}}_{\leq
    3} = \spann{\beta^3, \beta^2\gamma}$. Thus we may assume $f =
    F_2 + c_1 y^3 + c_2 y^2z$. As before, multiplying $x$, $y$ and $z$ by
    suitable constants we may assume $c_1, c_2\in \{0, 1\}$. We get four
    isomorphism types:
    \[
        f_{2, 00} = x^3y + z^4,\quad f_{2, 10} = x^3y + z^4 + y^3,\quad f_{2,
        01} =
        x^3y + z^4 + y^2z,\quad f_{2, 11} = x^3y + z^4 + y^3 + y^2z.
    \]
    To prove that the apolar algebras are pairwise non-isomorphic one shows
    that the only linear maps preserving $F_2$ are diagonal and argues as an
    in Example~\ref{ex:12221} or as described in the case of $F_3$ below.

    \paragraph{The case $F_3$.} We have $\Ann{F_3}_2 = (\beta^2,\,
    \beta\gamma,\,\alpha\beta - \gamma^2)$ and
    \[\perpspace{\tangunip{F_3}}_{\leq 3} = \spann{\beta^2\gamma,\ \beta^3,\
        \alpha\beta^2 -
    2\beta\gamma^2}.\]
    We may choose $\spann{y^3, y^2z, yz^2}$ as the complement of $\tangunip{F_3}$
    in $P_3$. Therefore the apolar algebra of each $f$ with top degree form $F_3$ is
    isomorphic to the apolar algebra of
    \[
        f_{3, *} = x^3y + x^2z^2 + c_1y^3 + c_2y^2z + c_3yz^2
    \]
    and two distinct such polynomials $f_{3, *1}$ and $f_{3, *2}$ lie in
    different $\Dgrpunip$-orbits.
    We identify the set of $\Dgrpunip$-orbits with
    $P_3/\tangunip{F_3}  \simeq \spann{y^3, y^2z, yz^2}$.
    We wish to determine isomorphism classes, that is,
    check which such $f_{3, *}$ lie in the same $\Dgrp$-orbit. A little care
    should be taken here, since $\Dgrp$-orbits will be bigger than expected.

    Recall that $\Dgrp/\Dgrpunip  \simeq \Dglcot \times k^*$ preserves the
    degree. Therefore, it is enough to look at the operators stabilising
    $F_3$. These are $c\cdot g$, where $c\in k^*$ is a constant and $g\in
    \Dglcot$ stabilises $\spann{F_3}$, i.e.~$\Ddual{g}(\spann{F_3}) =
    \spann{F_3}$. Consider such a $g$.
    It is a linear automorphism of $\DP$ and maps $\Ann{F}$ into itself.
    Since $\beta(\lambda_1\beta +
    \lambda_2\gamma)$ for $\lambda_i\in k$ are the only reducible quadrics in $\Ann{F_3}$ we see that
    $g$ stabilises $\spann{\beta, \gamma}$, so that $\Ddual{g}(x) = \lambda x$ for a
    non-zero $\lambda$. Now it is straightforward to check directly that the
    group of linear maps stabilising $\spann{F_3}$ is generated by the following
    elements
\def\DtmpA{a}%
\def\DtmpB{b}%
    \begin{enumerate}
        \item homotheties: for a fixed $\lambda\in k$ and for all linear
            $\ell\in \DP$ we have $\Ddual{g}(\ell) = \lambda \ell$.
        \item for every $\DtmpA , \DtmpB \in k$ with $\DtmpB \neq 0$,
            the map $\DPut{tab}{t_{\DtmpA , \DtmpB }}$ given by
            \[
                \Dtab(x) = x,\ \ \Dtab(y) = -\DtmpA ^2x + \DtmpB ^2 y - 2\DtmpA \DtmpB  z,\ \
                \Dtab(z) = \DtmpA x + \DtmpB z.
            \]
            which maps $F_3$ to $\DtmpB ^2 F_3$.
    \end{enumerate}
    The action of $\Dtab$ on $P_3/\tangunip{F_3}$ in the basis $(y^3, y^2z, yz^2)$
    is given by the matrix
    \[
        \begin{pmatrix}
            \DtmpB ^6 & 0 & 0\\
            -6\DtmpA \DtmpB ^5 & \DtmpB ^5 & 0\\
            \frac{27}{2}\DtmpA ^2\DtmpB ^4 & -\frac{9}{2}\DtmpA \DtmpB ^4 & \DtmpB ^4\\
        \end{pmatrix}
    \]
    Suppose that $f_{3, *} = x^3y + x^2z^2 + c_1y^3 + c_2y^2z + c_3yz^2$ has
    $c_1 \neq 0$. The above matrix shows that we may choose $\DtmpA $ and
    $\DtmpB $ and a homothety $h$ so that
    \[(h\circ \Dtab)(f_{3, *}) = c(x^3y + x^2z^2 + y^3 + c_3 yz^2),\quad
    \mbox{where}\ c\neq 0,\ c_3\in \{0, 1\}.\]
    Suppose $c_1 = 0$. If $c_2 \neq 0$ then we may choose $\DtmpA $,
    $\DtmpB $ and $\lambda$ so
    that $(h\circ \Dtab)(f_{3, *}) = x^3y + x^2z^2 + y^2z$. Finally, if $c_1 = c_2 =
    0$, then we may choose $\DtmpA = 0$ and $\DtmpB$, $\lambda$ so that $c_3 = 0$ or $c_3 = 1$. We get
    at most five isomorphism types:
    \begin{align*}
        &f_{3, 100} = x^3y + x^2z^2 + y^3,\quad f_{3, 101} = x^3y + x^2z^2 +
        y^3 + yz^2,\quad f_{3, 010} = x^3y + x^2z^2 + y^2z,\\
        &f_{3, 001} =
        x^3y + x^2z^2 + yz^2,\quad f_{3, 000} = x^3y + x^2z^2.
    \end{align*}
    By using the explicit description of the $\Dgrp$ action on $P_3/\tangunip{F_3}$
    one checks that the apolar algebras of the above polynomials are pairwise
    non-isomorphic.

    \paragraph{Conclusion:} There are $11$ isomorphism types of algebras with
    Hilbert function $(1, 3, 3, 3, 1)$.
    We computed the tangent spaces to the corresponding orbits in
    characteristic zero, using a computer implementation of the description in
    Proposition~\ref{ref:tangentspace:prop}.
    The dimensions of the orbits are as follows:
\begin{center}

\begin{tabular}{@{}l l c @{}}
    orbit && dimension\\ \midrule
    $\orbit{(x^4 + y^4 + z^4 + xyz)}$   && $29$\\
    $\orbit{(x^4 + y^4 + z^4)}$ && $28$\\
    $\orbit{\left( x^3y + z^4 + y^3 + y^2z \right)}$ && $28$\\
    $\orbit{\left( x^3y + z^4 + y^3 \right)}$ && $27$\\
    $\orbit{\left(x^3y + z^4 + y^2z\right)}$ && $27$\\
    $\orbit{\left(x^3y + z^4\right)}$ && $26$\\
\end{tabular}
\hspace{1cm}\begin{tabular}{@{}l l c @{}}
    orbit && dimension\\ \midrule
    $\orbit{\left(x^3y + x^2z^2 + y^3 + yz^2 \right)}$ && $27$\\
    $\orbit{\left(x^3y + x^2z^2 + y^3 \right)}$ && $26$\\
    $\orbit{\left( x^3y + x^2z^2 + y^2z \right)}$ && $26$\\
    $\orbit{\left( x^3y + x^2z^2 + yz^2 \right)}$ && $25$\\
    $\orbit{\left( x^3y + x^2z^2\right)}$ && $24$\\
    &&
\end{tabular}
\end{center}
\def\DGLthree{\operatorname{GL}_3}
    The closure of the orbit of $f_{1, 1} = x^4 + y^4 + z^4 + xyz$ is
    contained in $\DGLthree(x^4 + y^4 + z^4) + \DP_{\leq 3}$, which is
    irreducible of dimension $29$. Since the orbit itself has dimension $29$
    it follows that it is dense inside. Hence the orbit closure contains
    $\DGLthree(x^4 + y^4 + z^4) + \DP_{\leq 3}$.
    Moreover, the set $\operatorname{GL}_{3}(x^4 + y^4 + z^4)$ is dense inside
    the set $\sigma_3$ of forms $F$ whose apolar algebra has Hilbert function
    $(1, 3, 3, 3, 1)$.
    Thus the orbit of $f_{1, 1}$ is dense inside the set of polynomials with
    Hilbert function $(1, 3, 3, 3, 1)$. Therefore, the latter set is
    irreducible and of dimension $29$.

    It
    would be interesting to see which specializations between different
    isomorphism types are possible. There are some obstructions. For example,
    the $\DGLthree$-orbit of $x^3y + x^2z^2$ has smaller dimension than the
    $\DGLthree$-orbit of $x^3y + z^4$. Thus $x^3y + x^2z^2 + y^3 + yz^2$ does
    not specialize to $x^3y+z^4$ even though its $\Dgrp$-orbit has higher
    dimension.
\end{example}

\begin{example}[Hilbert function $(1, 3, 4, 3, 1)$]\label{ex:13431}
    \def\Dz{\gamma}%
    Consider now a polynomial $f\in k_{dp}[x, y, z]$ whose apolar algebra has
    Hilbert function $H = (1, 3, 4, 3, 1)$. Let $F$ be the leading form of
    $f$. Since $H$ is symmetric, the apolar algebra of $F$ also has Hilbert
    function $H$. In particular it is annihilated by $2$-dimensional space of
    quadrics. Denote this space by $Q$.
    Suppose that the quadrics in $Q$ do not share a common factor, then they
    are a complete intersection. By looking at the Betti numbers, we see that
    $\Ann{F}$ itself is a complete intersection of $Q$ and a cubic.
    From this point of view, the classification of $F$ using our ideas seems
    ineffective compared to classifying ideals directly.
    Therefore we will not attempt a full classification. Instead we show that
    there are infinitely many isomorphism types and discuss non-canonically
    graded algebras.

    Let $V = V(Q) \subset
    \mathbb{P}^2$ be the zero set of $Q$ inside the projective space with
    coordinates $\Dx, \Dy, \Dz$.

    \paragraph{Infinite family of isomorphism types.}
    Consider
    \begin{equation}\label{eq:Flambda}
        F = F_{\lambda} = \lambda_1 x^4 + \lambda_2 y^4 + \lambda_3 z^4 +
        \lambda_4(x+y+z)^4.
    \end{equation}
    for some non-zero numbers $\lambda_{i}$.  Then the Hilbert function of
    $\Apolar{f}$ is $(1, 3, 4, 3, 1)$ and $V(Q)$ is a set of four points, no
    three of them collinear.  Suppose that $F_{\lambda}$ and $F_{\lambda'}$
    are in the same $\Dgrp$-orbit. By Remark~\ref{ref:graded:rem} there is an
    element of $\Dglcot$ mapping $F_{\lambda}$ to $F_{\lambda'}$. Such
    element stabilizes $V(Q)$, which is a set of four points, no three
    of them collinear. But the only elements of $\Dglcot$
    stabilizing such set of four points are the scalar matrices.
    Therefore we conclude that the set of isomorphism classes of $F_{\lambda}$
    is the set of quadruples $\lambda_{\bullet}$ up to homothety. This set is
    in bijection with $(k^*)^4/k^*  \simeq  (k^*)^3$, thus infinite.
    It fact the set of $F_{\lambda}$ with $\lambda_4 = 1$ is a threefold in
    the moduli space of finite algebras with a fixed basis,
    see~\cite{poonen_moduli_space} for construction of this space.

    \paragraph{Non-canonically graded algebras.}
    We now classify forms $F$ such that all polynomials $f$ with leading form $F$ lie in
    $\orbit{F}$. As in Example~\ref{ex:13331} we see that
    \[
        \orbitunip{F} = F + \tangunip{F} + P_{\leq 2}.
    \]
    Thus we investigate $\perpspace{\tangunip{F}}_3$ using
    Proposition~\ref{ref:uniptangentspace:prop}. Let us suppose
    it is non-zero and pick a non-zero
    element $\sigma\in \perpspace{\tangunip{F}}_3$. Then
    $\ithpartial{\sigma}{i}\in \Ann{F}_2 = Q$ for all $i$. Since $Q$ is
    two-dimensional we see that the derivatives of $\sigma$ are linearly
    dependent. Thus up to coordinate change we may assume that $\sigma\in
    k[\Dx, \Dy]$. If $\sigma$ has one-dimensional space of derivatives, then
    $\sigma = \Dx^3$ up to coordinate change and $\frac{1}{3}\ithpartial{\sigma}{1} =
    \Dx^2$ annihilates $Q$. If $\sigma$ has two-dimensional space of
    derivatives $Q$, then $Q$ intersects the space of pure squares in an
    non-zero element $\Dx^2$, thus $\Dx^2$ annihilates $F$ in this case also.
    Conversely, if $\Dx^2$ annihilates $F$, then $\Dx^3$ lies in
    $\perpspace{\tangunip{F}}_3$.

    Summarizing, $F + \DP_{\leq 3} \subset \orbit{F}$ if and only if no square
    of a linear form annihilates $F$.
    For example, apolar algebras of all
    polynomials with leading form $F_{\lambda}$ from~\eqref{eq:Flambda} are
    canonically graded. On the other hand, for $F = xz^3 + z^2y^2 + y^4$ and
    $f_{\lambda} = F + ax^3$ the apolar algebra of $f_{\lambda}$ is
    canonically graded if and only if $a = 0$.

    Non-canonically graded algebras with Hilbert function $(1, 3, 4, 3, 1)$
    are investigated independently
    in~\cite{masuti_rossi_Artinian_level_algebras_of_socle_degree_four}, where
    more generally Hilbert functions $(1, n, m, n, 1)$ are considered.
\end{example}

Any discussion of isomorphism types would be incomplete without
tackling the example of Hilbert function $(1, 2, 2, 2, 1, 1, 1)$, which is the
smallest example where infinitely many
isomorphism types appear. It is also instructive as a non-homogeneous
example. Strangely enough, the argument is similar to the previous
examples. We will use some standard tools to deal
with the dual socle generator, see~\cite[Chap 3]{cjn13}.
\begin{example}[Hilbert function $(1, 2, 2, 2, 1, 1, 1)$]\label{ex:1222111}
    Consider any $f\in \DP$ such that
    \[H_{\Apolar{f}} = (1, 2, 2, 2, 1, 1, 1).\]
    Then $f$ is of degree $6$. Using the standard form, see
    \cite[Theorem~5.3]{ia94} or \cite[Section 3]{cjn13}, we may assume, after a
    suitable change of $f$,
    that
    \[
        f = x^6 + f_{\leq 4}.
    \]
    If $y^4$ appears with non-zero coefficient in $f_{\leq 4}$, we conclude
    that $f\in \orbitunip{(x^6 + y^4)}$ arguing similarly as in
    Example~\ref{ex:xsys}. Otherwise, we may assume that $\Dy^2$ is a
    leading form of an element of an annihilator of $f$, so that the only
    monomials in $f_4$ are $x^4$, $x^3y$ and $x^2y^2$.
    By subtracting a suitable element of $\tangunip{x^6}$ and rescaling
    by homotheties, we may assume
    \[
        f = x^6 + c x^2y^2 + f_{\leq 3},
    \]
    for $c = 0$ or $c = 1$. If $c = 0$, then the Hilbert function of $f$ is
    $(1, *, *, 1, 1, 1, 1)$, which is a contradiction; see
    \cite[Lem~1.10]{ia94} or \cite[Lem~4.34]{jelisiejew_MSc} for details. Thus $c = 1$.

    Consider elements of $\perpspace{\tangunip{(x^6 + x^2y^2)}}$ with order
    at most three. Every non-zero partial
    derivative $\tau$ of such an element $\sigma$ has order at most two and
    satisfies
    $\deg(\tau\hook f) \leq 1$. This is only possible if $\tau_2 = \Dy^2$,
    so that $\sigma_3 = \Dy^3$ up to a non-zero constant. Therefore we may assume
    \[
        f = x^6 + x^2y^2 + \lambda y^3 + f_{\leq 2}\quad\mbox{for some}\quad
        \lambda\in k.
    \]
    The symmetric decomposition of $\Apolar{f}$ is $\Delta_0 = (1, 1, 1, 1, 1,
    1, 1)$, $\Delta_1 = \mathbf{0}$, $\Delta_2 = (0, 1, 1, 1, 0)$, $\Delta_3 =
    \mathbf{0}$,
    $\Delta_4 = \mathbf{0}$. In particular $\Delta_4(1) = 0$, so that by
    Proposition~\ref{ref:sp:improvenosmallorder:prop} we may assume
    $f_{\leq 2} = 0$ and
    \[
        f = f_{\lambda} = x^6 + x^2y^2 +  \lambda y^3.
    \]
    Similarly as in the final part of Example~\ref{ex:12221} we
    see that for every $\lambda\in k$ we get a distinct algebra $A_{\lambda} =
    \Apolar{f_{\lambda}}$
    where $A_{\lambda}  \simeq A_{\lambda'}$ if and only if $\lambda =
    \lambda'$. Thus in total we get $|k| + 1$ isomorphism types.
    Note that the $|k|$ types $f_{\lambda}$ form a curve in the moduli space of finite
    algebras with a fixed basis, see~\cite{poonen_moduli_space} for
    information on this space. The point corresponding to $y^6 + x^4$ does not seem
    to be related to this curve.
\end{example}

\small
\def\cdprime{$''$}

\end{document}